\newcommand{\union}{\ensuremath{\cup}}		
\newcommand{\isdef}{\ensuremath{\mathrel{\mathop:}=}}		
\let\oldvec\vec 
\theoremstyle{plain}
\renewcommand{\emph}{\textbf}
\newcommand{\ohne}{\ensuremath{\backslash}}
\DeclareMathOperator{\Aut}{Aut}
\DeclareMathOperator{\Hex}{Hex}
\newcommand{\menge}[1]{\ensuremath{\mathbb{#1}}}
\newcommand{\N}{\menge{N}}
\newcommand{\Z}{\menge{Z}}
\newcommand{\e}{\text{e}}				
\renewcommand{\phi}{\varphi}
\renewcommand{\epsilon}{\varepsilon}
\renewcommand{\subset}{\subseteq}
\newtheorem{lem}{Lemma}[section]		
\newtheorem{ex}[lem]{Example}		
\newtheorem{rem}[lem]{Remark}
\newtheorem{theo}[lem]{Theorem}
\newtheorem*{theos}{Theorem}			
\newtheorem{cor}[lem]{Corollary}		
\newtheorem{prop}[lem]{Proposition}		
\newtheorem{defi}[lem]{Definition}	
{\begin{proof}[Well--defined]}
	{\end{proof}}
\newcommand{\K}[1]{\mathbb{K}(#1)}
\newcommand{\isom}{%
			\mathrel{\ooalign{\hss\hbox{\resizebox{.03\hsize}{.006\vsize}{$\rightarrow$}}\hss\cr%
			\kern0.1ex\raise0.45ex\hbox{\resizebox{.02\hsize}{.004\vsize}{$\sim$}}}}}
\newcommand{\iso}[1]{\overset{#1}{\isom}}
\newcommand{\neig}[2]{N_{#1}\left[#2\right]}
\newcommand{\cneig}[2]{N_{#1}^{\cap}\left[#2\right]}
\newcommand{\mammal}{pika}
\newcommand{\mammals}{pikas}
\newcommand{\Mammals}{Pikas}
\newcommand{\ccon}{$k$-convergent}
\newcommand{\cdiv}{$k$-divergent}
\newcommand{\thcir}{three-circle}
\newcommand{\thcirs}{three-circles}
\newcommand{\mathemph}[1]{\mathbf{#1}}
\newcommand{\HexagonalCoordinates}[2]{
	\foreach \i in {0,...,#1}{
		\foreach \j in {0,...,#2}{
			\coordinate (A\i\j) at ($(2*\i,0)+(60:2*\j)$);
			\coordinate (D\i\j) at ($(A\i\j)+(4/3,0)+(60:4/3)$);
			\coordinate (U\i\j) at ($(A\i\j)+(2/3,0)+(60:2/3)$);
		}
	}
}
\author{Markus Baumeister, Anna M. Limbach}
\title{Clique dynamics of locally cyclic graphs with $\delta\geq 6$}
\begin{document}
\maketitle

\listoftodos

\begin{abstract}
We prove that the clique graph operator $k$ is divergent on a locally cyclic 
graph $G$ (i.\,e. $N_G(v)$ is a circle) with minimum degree 
$\delta(G)=6$ if and only if $G$ is $6$-regular.
The clique graph $kG$ of a graph $G$ has the maximal complete subgraphs of 
$G$ as vertices, and the edges are given by non-empty intersections.
If all iterated clique graphs of $G$ are pairwise non-isomorphic, the graph $G$
is \cdiv; otherwise, it is \ccon.

To prove our claim, we explicitly construct the iterated clique graphs of those infinite locally cyclic graphs with $\delta\geq6$ which induce 
simply connected simplicial surfaces. These graphs are \ccon\enspace if the 
size of triangular-shaped subgraphs of a specific type is bounded from above. 
We apply this criterion by using the universal cover of 
the triangular complex of an arbitrary finite 
locally cyclic graph with $\delta= 6$, which shows our divergence 
characterisation.

\textbf{Keywords:} Iterated clique graphs, clique convergence, clique dynamics, locally cyclic, hexagonal grid,  covering graph
\end{abstract}

\section{Introduction}\label{Sect_Introduction}
Applied to a graph $G$, the clique graph operator constructs its clique graph $kG$.
The vertices of $kG $ are the maximal complete subgraphs of $G$, called 
\emph{cliques}. These cliques are adjacent in $kG$ if they intersect 
in $G$. In 1972, Hedetniemi and Slater first studied line graphs and triangle free graphs 
using the clique graph operator \cite{hedetniemi1972line}.
We are interested in locally cyclic graphs,
which means that the 
set of vertices incident to a given vertex $v$ always induces a circle. Popular locally cyclic graphs are the octahedron,
the icosahedron, and the hexagonal grid, which are displayed in Figure \ref{fig_loccyclic}.
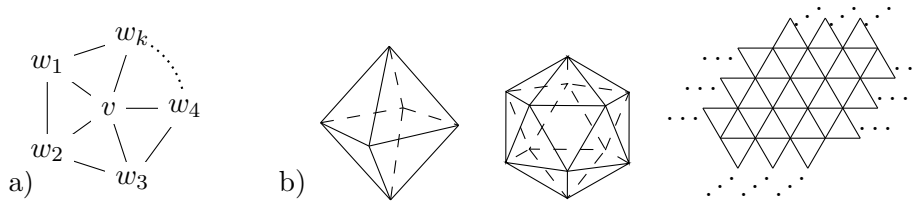
\begin{figure}[htbp]
    \begin{center}
        \begin{tikzpicture}[scale=0.5]
            \def\rad{2}
            \coordinate (Z) at (0,0);
            \foreach \i in {0,...,4}{
                \coordinate (R\i) at (\i*72:\rad);
            }		
            \foreach \i in {1,...,4}{
                \draw (Z) -- (R\i);
                \pgfmathparse{mod(int(1+\i),5)}
                \pgfmathsetmacro{\nxt}{\pgfmathresult}
                \draw (R\i) -- (R\nxt);
            }
            \draw (R0) -- (Z);
            \node at (-2.3,-2) {a)};
            \node[fill=white] at (Z) {$v$};
            
            \node[fill=white] at (R1) {$w_k$};	
            \node[fill=white] at (R2) {$w_1$};
            \node[fill=white] at (R3) {$w_2$};
            \node[fill=white] at (R4) {$w_3$};
            \node[fill=white] at (R0) {$w_4$};				
            
            \def\deg{15}
            \draw[dotted,thick] (\deg:2) arc (\deg:72-\deg:\rad);
            \end{tikzpicture}
            \hspace{0.5cm}
            \begin{tikzpicture}[scale=0.43]%
            \node at (3,0.5) {b)};
            \draw [color=black] (5.96,4.74)-- (3.86,2.38); 
            \draw [color=black] (3.86,2.38)-- (5.34,1.66); 
            \draw [color=black] (5.34,1.66)-- (5.96,4.74); 
            \draw (5.34,1.66)-- (6.,0.); 
            \draw (5.34,1.66)-- (8.08,2.3); 
            \draw (3.86,2.38)-- (6.,0.); 
            \draw (6.,0.)-- (8.08,2.3); 
            \draw (5.96,4.74)-- (8.08,2.3); 
            \draw [dash pattern=on 5pt off 5pt] (5.96,4.74)-- (6.38,2.84); 
            \draw [dash pattern=on 5pt off 5pt] (6.38,2.84)-- (8.08,2.3); 
            \draw [dash pattern=on 5pt off 5pt] (3.86,2.38)-- (6.38,2.84); 
            \draw [dash pattern=on 5pt off 5pt] (6.38,2.84)-- (6.,0.);
        \end{tikzpicture}
        \hspace{0.1cm} 
        \def \phi {1.617}
        \begin{tikzpicture}[
            x={(-0.86in, -0.5in)}, y = {(0.86in, -0.5in)}, z = {(0, 1in)},
            rotate = 22,
            scale = 0.1,
            foreground/.style = { },
            background/.style = { dash pattern=on 5pt off 5pt}
            ]
            \coordinate (9) at (0, -\phi*\phi,  \phi);
            \coordinate (8) at (0,  \phi*\phi,  \phi);
            \coordinate (12) at (0,  \phi*\phi, -\phi);
            \coordinate (5) at (0, -\phi*\phi, -\phi);
            \coordinate (7) at ( \phi, 0,  \phi*\phi);
            \coordinate (3) at (-\phi, 0,  \phi*\phi);
            \coordinate (6) at (-\phi, 0, -\phi*\phi);
            \coordinate (4) at ( \phi, 0, -\phi*\phi);
            \coordinate (2) at ( \phi*\phi,  \phi, 0);
            \coordinate (10) at (-\phi*\phi,  \phi, 0);
            \coordinate (1) at (-\phi*\phi, -\phi, 0);
            \coordinate (11) at ( \phi*\phi, -\phi, 0);
            
            \draw[foreground] (10) -- (3) -- (8) -- (10) -- (12) -- (8);
            \draw[foreground] (4) -- (12) -- (2) -- (4) -- (11) -- (2);
            \draw[foreground] (9) -- (3) -- (7) -- (9) -- (11) -- (7);
            \draw[foreground] (7) -- (8) -- (2) -- cycle;
            \draw[background] (12) -- (6) -- (10) -- (1) -- (6) -- (5) -- (1)
            -- (9) -- (5) -- (11);
            \draw[background] (5) -- (4) -- (6);
            \draw[background] (3) -- (1);
            \foreach \n in {1,...,12}
            \node at (\n) {};
        \end{tikzpicture}\hspace{0.1cm}
        \begin{tikzpicture}[scale=0.23]
            \HexagonalCoordinates{7}{7}
            \draw (A33) -- (A37);
            \draw (A42) -- (A47);
            \draw (A52) -- (A57);
            \draw (A62) -- (A66);
            \draw (A33) -- (A73);
            \draw (A24) -- (A74);
            \draw (A25) -- (A75);
            \draw (A26) -- (A66);
            \draw (A42) -- (A24);
            \draw (A52) -- (A25);
            \draw (A62) -- (A26);
            \draw (A63) -- (A36);
            \draw (A73) -- (A37);
            \draw (A74) -- (A47);
            \draw (A75) -- (A57);
            \node at (U75) {$\dots$};
            \node at (U74) {$\dots$};
            \node at (U73) {$\dots$};
            \node at (U57) {\reflectbox{$\ddots$}};
            \node at (U47) {\reflectbox{$\ddots$}};
            \node at (U37) {\reflectbox{$\ddots$}};
            \node at (D15) {$\dots$};
            \node at (D14) {$\dots$};
            \node at (D13) {$\dots$};
            \node at (D51) {\reflectbox{$\ddots$}};
            \node at (D41) {\reflectbox{$\ddots$}};
            \node at (D31) {\reflectbox{$\ddots$}};
        \end{tikzpicture}
    \end{center}
    \caption{a) The cyclic neighbourhood of $v$, b) Octahedron, icosahedron, hexagonal grid}\label{fig_loccyclic}
\end{figure}
 For minimum degree $\delta$ of at least 4, they can be described as 
 Whitney triangulations of surfaces, which were investigated for example 
in \cite{larrion2003clique},\cite{larrion2002whitney}, and 
\cite{larrion2006graph}.
In 1999, Larrión and Neuman-Lara showed that some \mbox{$6$-regular}
triangulations of the torus are \cdiv\enspace \cite{larrion1999clique} 
and, in 2000, they generalised this result to every \mbox{$6$-regular}
locally cyclic graph  graph \cite{larrion2000locally}.
Furthermore,  Larriòn, Neumann-Lara, and Piza\~na \cite{larrion2002whitney} 
showed that graphs in which every open neighbourhood of a vertex has a girth of at least $7$ are \ccon. 
Thus, locally cyclic graphs of minimum degree $\delta$ of at least $7$ are \ccon. The question remains whether every non-regular locally cyclic 
graph with  $\delta=6$ is \ccon. In this paper, 
we provide the affirmative answer by generalising the approach 
from \cite{larrion2000locally} and using the theoretical 
background from \cite{rotman1973covering}.

In the remainder of this paper, a graph is not necessarily considered finite. 
%
%
%
%
Furthermore, we extend the incidence terminology of a graph to \thcirs. Thus, the three vertices and the three edges of a 
\thcir\enspace are each \emph{incident} to the \thcir\ itself.
Throughout the paper, we use different kinds of neighbourhoods, which are defined in the appendix.

\section{Overview and basic concepts}
The focus of this paper consists in proving our main result:
\begin{theos}[Main result]
    Let $G$ be a finite, locally cyclic graph with minimum degree $\delta=6$.
    The clique graph operator diverges on $G$ if and only if $G$ is $6$-regular.
\end{theos}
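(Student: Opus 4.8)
The plan is to split the biconditional into its two implications. First I would record the structural simplification that drives everything: since $N_G(v)$ is an induced cycle of length $\geq 6$ at every vertex, $G$ contains no $K_4$ and every edge lies in a triangle, so the cliques of $G$ are exactly its triangles and $kG$ is the graph whose vertices are the triangles of $G$, two being adjacent precisely when they share a vertex. For the implication ``$G$ is $6$-regular $\Rightarrow$ $k$ diverges'' I would simply invoke the theorem of Larrión and Neumann-Lara \cite{larrion2000locally}, which states that every $6$-regular locally cyclic graph is \cdiv. Everything else is the contrapositive of the remaining implication, namely: a finite locally cyclic graph $G$ with $\delta=6$ that is not $6$-regular is \ccon.

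To prove this, I would pass to the universal cover. Because $\delta(G)=6$ but $G$ is not $6$-regular, $G$ has a vertex $v_0$ of degree at least $7$. Let $\tilde G$ be the universal cover of the triangular complex of $G$, supplied by \cite{rotman1973covering}. Covering maps preserve the local combinatorics, so $\tilde G$ is again locally cyclic with $\delta\geq 6$ and each vertex has the same degree as its image; being simply connected, it induces one of the infinite simply connected simplicial surfaces handled by the explicit construction (necessarily the plane, since a closed simply connected surface would be a sphere, incompatible with $\delta\geq 6$ by a Gauss--Bonnet count). The crucial quantitative input is that the lifts of $v_0$ are dense: lifting a shortest path in $G$ from $p(\tilde u)$ to $v_0$ shows that every vertex $\tilde u$ of $\tilde G$ lies within distance at most the diameter of $G$ from some lift of $v_0$, i.e.\ from a vertex of degree $\geq 7$.

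This density is exactly what the convergence criterion needs. The ``triangular-shaped subgraphs of a specific type'' are flat patches built from degree-$6$ vertices, so they must avoid every lift of $v_0$ and therefore cannot contain a metric ball of radius exceeding the diameter of $G$; hence their size is bounded. By the criterion obtained from the explicit construction, $\tilde G$ is then \ccon. It remains to transfer convergence downstairs, for which I would use that the clique operator is compatible with the covering: $kp\colon k\tilde G\to kG$ is again a covering with the same deck group, so iterating, the explicit $\pi_1$-equivariant description of the eventually periodic sequence $(k^n\tilde G)_n$ descends to the quotient and exhibits $(k^n G)_n$ as eventually periodic. Thus $G$ is \ccon, and combined with the first implication this gives the stated equivalence.

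The main obstacle is the convergence criterion itself — the explicit construction of the iterated clique graphs $k^n\tilde G$ of the infinite surface and the proof that bounded flat patches force the sequence to stabilise rather than to grow without bound, the unbounded case being exactly the divergent $6$-regular triangular lattice. Two further points I would need to nail down are that the clique operator genuinely commutes with coverings of triangular complexes, so that eventual periodicity transfers from $\tilde G$ to $G$, and the bookkeeping showing that the cover preserves both local cyclicity and the degree sequence, so that $\tilde G$ indeed lies in the scope of the construction.
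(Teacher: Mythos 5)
Your proposal is correct and follows essentially the same route as the paper: cite Larri\'on--Neumann-Lara for the $6$-regular direction, and for the converse pass to the universal cover, use finiteness of $G$ to bound the size of embeddable $\Delta_m$ (since large triangular patches consist of interior vertices of degree $6$, yet every vertex lies within bounded distance of a lift of a degree-$\geq 7$ vertex), apply the convergence criterion for triangularly simply connected graphs, and descend via the Galois property of the universal covering and the compatibility of $k$ with Galois coverings. The one ingredient you treat as a black box --- the convergence criterion for the simply connected case --- is indeed the technical heart of the paper (Sections 3--6), and your assembly of the remaining pieces matches the paper's Section on coverings.
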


%
%

\noindent This paper is based on two core insights:
\begin{enumerate}
    \item The explicit description of iterated clique graphs
        is massively simplified if we restrict ourselves to 
        triangularly simply connected graphs (there, triangular
        substructures do not ``self-overlap'').
        Subsection \ref{Subsect_SimplyConnected} gives a short
        refresher on the definitions. In Section \ref{Sect_UniversalCover},
        we extend the result to general locally cyclic graphs with 
        $\delta =6$ using universal covers.
    \item Clique graphs grow only in regions with ``many'' vertices
        of degree 6. More precisely, these vertices have to form
        a triangular-shaped structure. We can also parametrise these regions by
        barycentric coordinates. The necessary
        formalism is given in Subsection \ref{Subsect_Hexagonal}.
        This way, we combinatorially encode the adjacencies
        in the iterated clique graphs in Section \ref{Sect_Graph}. 
\end{enumerate}

\noindent For simplification, we shrink the specifications of the centrally discussed object with a new definition.
\begin{defi}
    A \emph{\mammal} is a triangularly simply connected locally
    cyclic graph $G$ with minimum degree $\delta=6$.
\end{defi}

\noindent For \mammals, we head for the following theorem. 

\begin{theos}[Main Theorem for \Mammals]
    Let $G$ be a \mammal.
    If there is an $m\geq 0$ such that 
    the triangular-shaped graph of side length $m$ (see Definition 
    \ref{Def_HexagonalGrid}) cannot be embedded into $G$, the clique operator is 
    convergent on $G$.
\end{theos}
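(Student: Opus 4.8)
The plan is to read off convergence directly from the explicit description of the iterated clique graphs $k^nG$ developed for \mammals\ in Section \ref{Sect_Graph}, rather than by any vertex-counting argument (which would fail for the infinite \mammals\ that arise as universal covers in Section \ref{Sect_UniversalCover}). Since \ccon\ requires only that \emph{two} of the graphs $G = k^0G, kG, k^2G, \dots$ be isomorphic, it suffices to show that this sequence stabilises up to isomorphism. As $G$ is a \mammal, every vertex satisfies $\deg(v)\geq 6$, so the combinatorial curvature $6-\deg(v)$ is nowhere positive; the vertices split into flat ones ($\deg = 6$) and negatively curved ones ($\deg > 6$). By the second core insight, all growth of the clique graph is localised to the flat parts, and by the first core insight these parts carry globally consistent barycentric coordinates precisely because $G$ is triangularly simply connected (cf. Subsection \ref{Subsect_SimplyConnected} and Subsection \ref{Subsect_Hexagonal}). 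I would therefore first restate the construction so that the vertices and adjacencies of $k^nG$ are expressed through the combinatorial data of $G$, with the vertices appearing at step $n$ indexed by flat triangular-shaped subconfigurations together with their barycentric coordinates.

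Second, I would quantify how the side length of these subconfigurations enters the description. The hypothesis states that the triangular-shaped graph of side length $m$ (Definition \ref{Def_HexagonalGrid}) does not embed into $G$; equivalently, every flat triangular-shaped subgraph of $G$ has side length strictly less than $m$. Generalising the regular case of \cite{larrion2000locally}, I expect the explicit construction to show that a flat region of side length $s$ can contribute genuinely new structure to $k^nG$ only while the iteration depth $n$ stays below a bound depending on $s$: once $n$ outgrows the available side length, that region's contribution to the clique graph no longer changes. This matches the prototype of the hexagonal grid, where the flat region is unbounded and the growth therefore never stops, yielding divergence.

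Third, I would conclude. Because the bound $s < m$ is \emph{uniform} over all flat regions of $G$ — this uniformity is essential, since an infinite \mammal\ may contain infinitely many such regions — there is a single threshold $N = N(m)$ after which no flat region can produce new structure, while the negatively curved vertices never contributed growth in the first place. Hence the combinatorial description of $k^nG$ ceases to change for $n \geq N$, giving an isomorphism $k^nG \cong k^NG$ for all such $n$ (or at worst eventual periodicity). In either case the iterated clique graphs are not all pairwise non-isomorphic, so the clique operator is convergent on $G$.

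The hard part will be the second step: turning the qualitative slogan ``growth happens only in flat triangular regions'' into a quantitative, side-length-indexed stabilisation, and in particular controlling the boundary between a flat region and the surrounding negatively curved vertices, where the barycentric parametrisation of Subsection \ref{Subsect_Hexagonal} breaks down and neighbouring regions could in principle interact. I would also have to check that the stabilisation is a genuine graph isomorphism and not merely a coincidence of local data — which is exactly why it matters that Section \ref{Sect_Graph} encodes the adjacencies of $k^nG$, and not just its vertices, explicitly.
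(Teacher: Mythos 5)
Your overall strategy coincides with the paper's: convergence is read off from the explicit description $k^nG\cong G_n$ of Corollary \ref{Cor_Edges}, by showing that the sequence $(G_n)_n$ stops changing once the hypothesis on $\Delta_m$ kicks in. However, the step you flag as ``the hard part'' --- converting the slogan that growth is confined to flat regions into a quantitative, side-length-indexed stabilisation, with a uniform threshold $N(m)$ and control of the interface between flat and negatively curved vertices --- is not where the difficulty lies, and your plan for it overcomplicates what is actually a two-line observation. By Definition \ref{Def_theCliqueGraph}, the vertices of $G_n$ are \emph{exactly} the subgraphs of $G$ isomorphic to some $\Delta_j$ with $j\leq n$ and $j\equiv_2 n$, and the adjacency rules depend only on the levels of the two subgraphs, not on $n$. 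Since $\Delta_m\not\hookrightarrow G$ forces $m\geq 2$ and also $\Delta_{m'}\not\hookrightarrow G$ for every $m'\geq m$, the graphs $G_m$ and $G_{m-2}$ have identical vertex sets and identical edge sets, i.e.\ $G_m=G_{m-2}$; Corollary \ref{Cor_Edges} then gives $k^mG\cong k^{m-2}G$, which is $k$-convergence. No separate uniformity argument over flat regions, no threshold $N(m)$, and no analysis of boundaries between flat and negatively curved parts is needed at this stage --- all of that labour is already absorbed into the proof that $G_n\cong k^nG$ (Sections \ref{Sect_Graph}--\ref{Sect_CliqueIntersections}), which both you and the paper take as the input to this theorem. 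Your concern about ``eventual periodicity'' versus genuine stabilisation is likewise moot: two isomorphic iterates suffice by the definition of $k$-convergence, as you note yourself at the outset. So the proposal is not wrong, but as written it defers the decisive step to an expectation; once you invoke Definition \ref{Def_theCliqueGraph} explicitly, that step closes immediately and your proof becomes the paper's proof of Theorem \ref{Thm_ConvergentorDivergent}.
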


\noindent We prove the main theorem for \mammals\enspace by induction. 
For a \mammal\enspace $G$ and for every $n\in\Z_{\geq 0}$, we define
a graph $G_n$, beginning with $G_0= G$. 
We construct all the cliques of $G_n$ and their intersection, which yields 
$G_{n+1}\cong k(G_n)$ and, by induction, $k^nG=G_n$. Thus, $G$ is clique 
convergent if and only if the sequence $G_n$ converges.

Before diving into this line of arguments, in Section \ref{Sect_Topology}, 
we discuss some intricate topological arguments that are based
on the discrete curvature of a locally cyclic graph. 
The results will be used to show that  all cliques in $G_n$ are of 
one of the types we describe and that the adjacencies in $G_{n+1}$ 
correspond to the intersections of cliques of $G_n$.  

In Section \ref{Sect_Graph}, we define the graph $G_n$ and construct 
two types of cliques in $G_n$. In Section \ref{Sect_ChartExtension}, 
our goal is to ensure the existence of a combined parametrisation of 
intersecting triangular-shaped subgraphs in the general case and to 
handle the exceptional cases. The results of this discussion are used 
in Section \ref{Sect_CliqueCompleteness} to prove that $G_n$ has no 
more cliques than the ones from Section \ref{Sect_Graph}. In Section 
\ref{Sect_CliqueIntersections}, we finish the inductive proof for 
$G_n\cong k^nG$ by describing the clique intersections in $G_n$ 
through the vertex adjacencies in $G_{n+1}$ and prove the main theorem 
for \mammals. In Section \ref{Sect_UniversalCover}, we deduce a 
convergence criterion that does not rely on the simple connectivity
of $G$. In the special case of finite $G$ we conclude the main result. 
Section \ref{Sect_FurtherResearch} includes our conjecture about 
the infinite case and our suggestions for further research questions.

\subsection{Review: Simple Connectivity}\label{Subsect_SimplyConnected}
In this subsection, we review topological aspects of locally cyclic
graphs.
The definition of a path we use here originates from topological settings. In graph theoretic literature, those paths would be called walks. A \emph{path}	$p = x_0x_1\ldots x_k$ in a graph $G$ is a finite sequence of vertices such that $x_ix_{i+1} \in E$ for all $0 \leq i < k$.
The \emph{length} of a path is the number of contained edges.
Let $G$ be a locally cyclic graph. Following 
\cite{larrion2000locally}, its \emph{triangular complex} is the
simplicial complex $\mathemph{\K{G}}$, whose simplices are the vertices, edges,
and \thcirs\enspace of $G$. In this way, the \thcirs\enspace of $G$ become the facets 
of $\K{G}$ and, from now on, we will call them the \emph{facets} of $G$, too.

Like in \cite{rotman1973covering}, we call two paths $\alpha$ and $\beta$ in $G$ \emph{equivalent} 
if we can reach one path from the other by applying a finite number of elementary moves 
consisting in replacing two consecutive path edges $uv$ and $vw$  by the 
edge $uw$ or the other way around whenever $\{u,v,w\}$ is a facet. 
The complex $\K{G}$ is called 
\emph{simply connected} if $G$ is connected --- i.\,e. for every pair of 
vertices, there is a path in $G$ connecting them ---,
 and if every closed path 
$\alpha$ is equivalent to a path consisting of a single vertex (which is 
the origin and end of $\alpha$). We call a locally cyclic graph 
$G$ \emph{triangularly simply connected} if $\K{G}$ is simply connected.

%

\subsection{Hexagonal Grid}\label{Subsect_Hexagonal}
If a locally cyclic graph has an area of vertex degree $6$, the graph locally looks like the hexagonal grid, a term we will define now.
\begin{defi}\label{Def_HexagonalGrid}
    We define the coordinate set 
    \begin{equation*}
        \mathemph{\oldvec{D}_0} \isdef \{ (1,-1,0), (1,0,-1), (-1,1,0), (0,1,-1), (-1,0,1), (0,-1,1) \}.
    \end{equation*}
    For $m \in \Z$, the \emph{hexagonal grid of height} $\mathemph{ m}$ is the graph $\mathemph{Hex_m} = (V_m,E_m)$ with
    \begin{align*}
        \mathemph{V_m} &\isdef \{ (x_1,x_2,x_3) \in \Z^3 \mid x_1+x_2+x_3 = m \} \text{ and}\\ 
        \mathemph{E_m} &\isdef \{ \{x,y\} \subset V_m \mid x-y \in \vec{D}_0 \}.
    \end{align*}
    For $m\geq 0$, we denote the \emph{triangular-shaped graph of side length} $\mathemph{m}$, which is defined as $\Hex_m[V_m\cap\Z_{\geq 0}^3]$, by $\mathemph{\Delta_{m}}$.
    Figure \ref{Fig_Delta_m} shows the smallest five of those subgraphs.

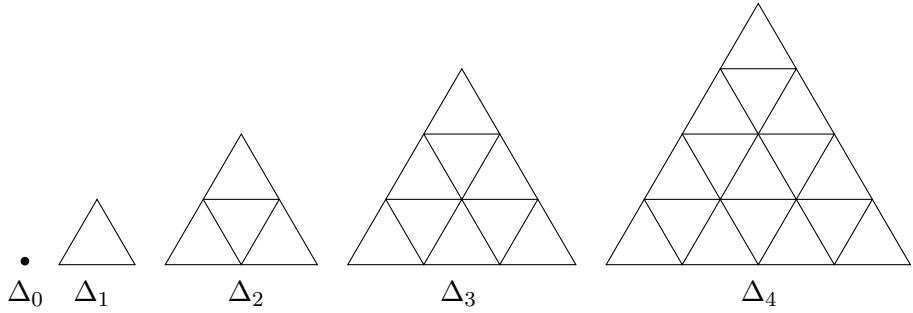
\begin{figure}[htbp]
    \begin{center}
        \quad\quad
        \begin{tikzpicture}[scale=0.5]
            \HexagonalCoordinates{1}{1}
            \node[draw,fill=black,inner sep=1pt, shape=circle](x){};
            \end{tikzpicture}\quad
            \begin{tikzpicture}[scale=0.5]
            \HexagonalCoordinates{1}{1}
            \draw (A00) -- (A10) -- (A01) --cycle;
        \end{tikzpicture}\quad
        \begin{tikzpicture}[scale=0.5]
            \HexagonalCoordinates{2}{2}
            \draw (A00) -- (A20) -- (A02) --cycle;
            \draw (A10) -- (A01) -- (A11) --cycle;
        \end{tikzpicture}\quad
        \begin{tikzpicture}[scale=0.5]
            \HexagonalCoordinates{3}{3}
            \draw (A00) -- (A30) -- (A03) --cycle;
            \draw (A01) -- (A21) -- (A20) --(A02) -- (A12) -- (A10) --cycle;
        \end{tikzpicture}\quad
        \begin{tikzpicture}[scale=0.5]
            \HexagonalCoordinates{4}{4}
            \draw (A00) -- (A40) -- (A04) --cycle;
            \draw (A01) -- (A31) -- (A30) --(A03) -- (A13) -- (A10) --cycle;
            \draw (A02) -- (A20) -- (A22) --cycle;
        \end{tikzpicture}
        \nopagebreak
        $\Delta_0\quad\Delta_1\quad\quad\quad\quad \Delta_2\quad\quad\quad\quad\quad\quad \Delta_3\quad\quad\quad\quad\quad\quad\quad\quad\quad \Delta_4\quad\quad\quad$
    \end{center}
    \caption{$\Delta_m$ for $m\in \{0,\ldots,4\}$}
    \label{Fig_Delta_m}
\end{figure}
\end{defi}

For a locally cyclic graph $G$, a \emph{hexagonal chart} is a graph
isomorphism $\mu: H \to F$ (also written $H \iso{\mu} F$) with
vertex-induced subgraphs $H \subseteq \Hex_m$ and $F\subseteq G$.
If $F \cong \Delta_m$, we call it \emph{standard chart}.

Since the symmetric group on three points acts on the hexagonal grid
by coordinate permutations, every subgraph $F \cong \Delta_m$ with
$m \geq 1$ has six standard charts.

For $(t_1,t_2,t_3)\in\Z^3$, we define the \emph{triangle inclusion map}:
    \begin{equation*}
        \mathemph{\Delta_m^{t_1,t_2,t_3}} : \Delta_m \to \Hex_{m+t_1+t_2+t_3}, \qquad
        (a_1,a_2,a_3) \mapsto (a_1+t_1,a_2+t_2,a_3+t_3).
    \end{equation*}

\section{Topology}\label{Sect_Topology}
We translate $\Delta_m$-shaped graphs into the setting of locally cyclic graphs.
    A \emph{locally cyclic graph with boundary} is a simple
   graph $G=(V,E)$ such 
    that for every vertex $v\in V$ the (open)
    neighbourhood $N_G(v)$ is either a circle graph or a path graph. 
    If $N_G(v)$ is a circle, 
    $v$ is called an \emph{inner vertex} of $G$; otherwise, $v$ is called 
    a \emph{boundary vertex.}
    
    An edge $xy\in E$ is called an \emph{inner edge} if its incident 
    vertices $x$ and $y$ have 
    two common neighbours, and a \emph{boundary edge}, if not. The \emph{boundary graph} $\mathemph{\partial G}$
    is the subgraph of $G$ consisting of the boundary vertices and the boundary edges.
    $G$ is called \emph{locally cyclic} if $\partial G=\emptyset$.

    The boundary graph $\partial G$ is well-defined, since for every inner 
    vertex $x$ and every edge $xy$, the vertex $y$ lies in the cyclic 
    neighbourhood $N_G(x)$ and has, therefore, two neighbours in 
    $N_G(x)$. Thus, $xy$ is an inner edge. Conversely, there
    exist inner edges that are incident to only boundary vertices.

\subsection{Straight Paths}\label{Subsect_StraightPaths}
A monomorphism of locally cyclic graphs with boundary preserves vertex degrees 
of inner vertices. Furthermore, the number of incident facets on either side of a 
path is preserved, too. We formalise this by the concept of \emph{path degree}:

    Let $p = x_0x_1\ldots x_k$ be a path in a locally cyclic graph $G$ and 
    consider a vertex $x_i$
    for $0 < i < k$.
    \begin{itemize}
        \item If $x_i$ is an inner vertex, $N_G(x_i)$ is a circle, say
            of length $L$, and 
            marking $x_{i-1}$ and $x_{i+1}$ splits the circle into
            two paths of lengths $l_1$ and $l_2$, satisfying $l_1+l_2=L$.
            The \emph{path degree} $\mathemph{deg_G^p(x_i)}$ is defined as
            $\{l_1,l_2\}$, as is visualized in Figure \ref{fig_pathdeg}.
        \item If $x_i$ is a boundary vertex, $N_G(v)$ is a path graph
            containing a unique shortest path $q$ from $x_{i-1}$ to $x_{i+1}$
            with length $l$.
            The \emph{path degree} $\mathemph{deg_G^p(x_i)}$ is defined as $\{l\}$.
    \end{itemize}
    The concept of path degrees is illustrated in Figure \ref{fig_pathdeg}.
    The path $p$ is called \emph{straight}
    if $3$ is contained in $\deg_G^p(x_i)$ for every $0 < i < k$. 
     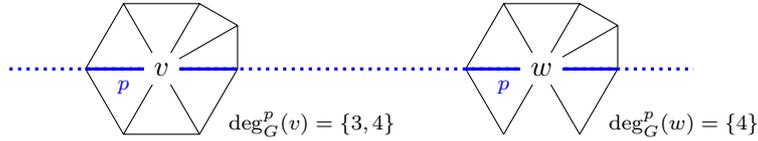
\begin{figure}[htbp]
            \centering
            \begin{tikzpicture}[scale=0.5]
            \HexagonalCoordinates{10}{4}
            \draw
            (A11) -- (A20) -- (A30) -- (A31) node[draw=none,fill=none,font=\scriptsize,midway,below right] {$\deg_G^p(v)=\{3,4\}$} -- (D21) -- (A22) -- (A12) -- (A11)
            (A20) -- (A22)
            (A30) -- (A12)
            (A21) -- (D21);
            \draw
            [very thick, blue]
            (A11) -- (A21) node[draw=none,fill=none,font=\scriptsize,midway,below] {$p$} -- (A31);
            
            \draw
            (A61) -- (A70);
            \draw (A80) -- (A81) node[draw=none,fill=none,font=\scriptsize,midway,below right] {$\deg_G^p(w)=\{4\}$} -- (D71) -- (A72) -- (A62) -- (A61)
            (A70) -- (A72)
            (A80) -- (A62)
            (A71) -- (D71);
            \draw
            [very thick, blue]
            (A61) -- (A71) node[draw=none,fill=none,font=\scriptsize,midway,below] {$p$} -- (A81);
            \draw
            [very thick, blue, dotted]
            (A01) -- (A91);
            \node[fill=white] at (A21) {$v$};
            \node[fill=white] at (A71) {$w$};
            \end{tikzpicture}
            %
            \caption{The path degrees of the inner vertex $v$ and the boundary vertex $w$. Since  the path degree $\deg_G^p(w)$ does not contain $3$, $p$ is not straight.}\label{fig_pathdeg}
    \end{figure}

\noindent As an important application, we construct the straight paths within $\Delta_m$.
\begin{rem}\label{Rem_StraightPathsInTriangle}
    Up to symmetry (see Subsection \ref{Subsect_Hexagonal}),
    the maximal straight paths with length at least $m-2$ in 
    $\Delta_m$ (with $m \geq 3$) are the following, 
    depicted in Figure \ref{Fig_alphabetagamma}:
    \begin{enumerate}
	\item For length $m$, we have $\alpha: \{0,\dots,m\}\to\Z^3$ with 
	    $t \mapsto (m-t,t,0)$.
	\item For length $m-1$, we have $\beta: \{0,\dots,m-1\}\to\Z^3$ 
	    with $t \mapsto (m-1-t,t,1)$.
	\item For length $m-2$, we have $\gamma: \{0,\dots,m-2\}\to\Z^3$ 
	    with $t \mapsto (m-2-t,t,2)$.
    \end{enumerate}
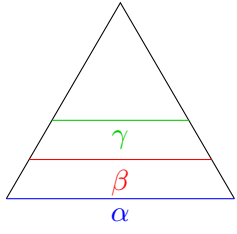
\begin{figure}[htbp]
	\centering
	\begin{tikzpicture}[scale=0.3]
		\HexagonalCoordinates{5}{5}
		\draw (A00) -- (A05) -- (A50);
		\draw[blue] (A00) --node[below] {$\alpha$} (A50);
		\draw[red] (A01) --node[below] {$\beta$} (A41);
		\draw[green!80!black] (A02)--node[below] {$\gamma$} (A32);
	\end{tikzpicture}
	\caption{The maximal straight paths in $\Delta_m$}\label{Fig_alphabetagamma}
\end{figure}
\end{rem}
\begin{proof}
    The boundary $\partial \Delta_m$ consists of six straight
    paths of length $m$, given by $\alpha$ and its images under coordinate permutations.
    Since any other straight path contains an inner vertex and
    inner vertices have degree 6, the value of one of
    the three coordinates is constant along the path (compare Subsection
    \ref{Subsect_StraightPaths}).
    Without loss of generality (see Subsection \ref{Subsect_Hexagonal}),
   let this be the third coordinate. This way, we receive $\beta$ and $\gamma$.
\end{proof}

\subsection{Topological consequences of $\delta=6$}

To understand the structure of a \mammal\enspace $G$ properly, we need to make sure that the boundary vertices of a $\Delta_m$-shaped subgraph are not connected by paths in an unexpected way.
\begin{lem}\label{lem_topology}
    For every induced subgraph $S\cong\Delta_m$ of 
        $G$ the following statements hold:
    \begin{enumerate}
	\item\label{top_a} Any edge incident to two boundary vertices of $S$ lies in $S$.
        \item\label{top_b} Let $v_0v_1v_2$ be a path with $v_0, v_2 \in \partial S$ and $v_1 \notin S$. 
            Then, either $v_0 = v_2$ or 
            $\{v_0,v_1,v_2\}$ is a facet (i.e. $v_0v_2$ is
            a boundary edge). 
	\item\label{top_c} Let $v_0v_1v_2v_3$ be a non-repeating 
            path with $v_0, v_3 \in \partial S$ and $v_1,v_2\notin S$
            such that neither $\{v_0,v_1,v_3\}$ nor $\{v_0,v_2,v_3\}$ are facets.
            Then, there exists a boundary vertex $v \in \partial S$ 
            such that $\{v,v_1,v_2\}$ is a facet.
    \end{enumerate} 
\end{lem}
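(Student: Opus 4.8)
My plan is to treat the three parts with increasing depth, using that $G$ is a locally cyclic, triangularly simply connected graph with $\delta=6$, so that $\K{G}$ is a simply connected triangulated surface without boundary, i.e. combinatorially a triangulation of the plane (whence every closed loop bounds a disc). Part \ref{top_a} is immediate: $S$ is an \emph{induced} subgraph, and an edge incident to two boundary vertices has both endpoints in $V(S)$, so it lies in $S$. The real content is in \ref{top_b} and \ref{top_c}, and for these I first record two structural facts. First, every inner vertex $w$ of $S$ has degree exactly $6$ in $G$ with $N_G(w)\subseteq V(S)$: it already carries $6$ neighbours forming an induced $6$-cycle inside $S$, and since an induced $6$-cycle occurs in a circle graph only as the entire circle, this forces $\deg_G w=6$. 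Consequently no exterior vertex is adjacent to an inner vertex of $S$, i.e. $N_G(v)\cap V(S)\subseteq\partial S$ for every $v\notin S$. Second, at each $w\in\partial S$ the set $N_S(w)=N_G(w)\cap V(S)$ is connected (it is the path-neighbourhood of a boundary vertex of $\Delta_m$), hence a contiguous arc of the circle $N_G(w)$, whose complement is the ``exterior fan'' of $w$.

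The main tool is discrete Gauss--Bonnet. Given an excursion whose endpoints lie on $\partial S$ and whose interior vertices lie outside $S$, I close it with a shortest boundary arc $\pi\subseteq\partial S$ to a simple loop and let $D$ be the disc it bounds that is disjoint from the interior of $S$. Assigning angle $1$ to every triangle corner,
\[
\sum_{v\in\operatorname{int}(D)}(6-\deg_G v)\;+\;\sum_{v\in\partial D}\bigl(3-\tau_D(v)\bigr)=6,
\]
where $\tau_D(v)$ is the number of triangles of $D$ at $v$. Since $\delta=6$, each interior term is $\le 0$. Moreover, every vertex $u$ in the interior of $\pi$ has both of its $D$-edges on $\partial S$, so the $D$-triangles at $u$ are exactly its whole exterior fan; a short count gives $\tau_D(u)=\deg_G u-3$ (more at a corner), whence $3-\tau_D(u)=6-\deg_G u\le 0$. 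Thus all the positive curvature $6$ concentrates at the few ``tip'' vertices of the excursion, each contributing at most $3-1=2$.

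For \ref{top_b} the excursion is $v_0v_1v_2$ with tips $v_0,v_1,v_2$. If $v_0\neq v_2$ and $v_0\not\sim v_2$, then $\pi$ has length $\ge 2$, the loop is simple, and the budget reads $6=\bigl(\text{int}\le 0\bigr)+\bigl(\text{through}\le 0\bigr)+\sum_{i}(3-\tau_D(v_i))$ with the tip sum $\le 6$. Equality forces $\tau_D(v_1)=1$, so the only $D$-triangle at $v_1$ is $\{v_0,v_1,v_2\}$ and hence $v_0\sim v_2$ --- a contradiction. Therefore $v_0=v_2$, or $\{v_0,v_1,v_2\}$ is a facet; in the latter case the edge $v_0v_2$ has exactly two common neighbours in $G$, one of them the exterior vertex $v_1\notin S$, so it has at most one inside $S$ and is a boundary edge of $S$, as claimed.

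For \ref{top_c} it suffices to show that the triangle of $G$ on the $S$-ward side of the edge $v_1v_2$ has its apex $w$ in $V(S)$: then $w$ is a common neighbour of $v_1,v_2\notin S$, so $w\in\partial S$ and $\{w,v_1,v_2\}$ is the required facet (if $w\in\{v_0,v_3\}$ this is already a boundary apex). By part \ref{top_b} each of $v_1,v_2$ meets $\partial S$ in at most two, necessarily adjacent, vertices, and both lie in the first exterior layer (adjacent to $v_0$, resp. $v_3$). On the circle $N_G(v_1)$ the arc $N_G(v_1)\cap\partial S$ contains $v_0$, and $v_2$ lies in the complementary exterior part with its two circle-neighbours being exactly the apexes of $v_1v_2$; the claim reduces to showing that $v_2$ directly abuts this $S$-arc, so that one apex is an arc endpoint, hence in $\partial S$. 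The two hypotheses enter to kill diagonal shortcuts: were $v_0\sim v_3$ with the quadrilateral $v_0v_1v_2v_3$ empty, either diagonal would create one of the forbidden facets $\{v_0,v_2,v_3\}$, $\{v_0,v_1,v_3\}$. The \textbf{main obstacle} is to prove that $v_2$ really abuts the $S$-arc in every configuration, that is, to exclude the ``thick disc'' in which $v_1v_2$ sits one layer away from $\partial S$; here the Gauss--Bonnet budget of the exterior disc together with $\delta=6$ must be used to stop the exterior layer from bulging, combined with a case distinction for the delicate situations where $v_1,v_2$ attach near a corner or to two different sides of $S$. I expect this step to require a minimal-counterexample argument or an induction on the number of triangles of the exterior disc.
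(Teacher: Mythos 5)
Your parts \eqref{top_a} and \eqref{top_b} are sound. Part \eqref{top_a} is indeed immediate from $S$ being induced (the paper derives it from the same curvature budget, but your shortcut is valid). Part \eqref{top_b} is essentially the paper's argument run on the complementary disc: the paper closes the excursion with the \emph{outer} boundary arc and applies the combinatorial Gauss--Bonnet formula to the disc containing $S$, whereas you use the thin disc between the excursion and $\partial S$; the corner bookkeeping comes out with opposite signs, but the resulting inequality is the same. (One small imprecision: the correct closing arc is not necessarily the \emph{shortest} one, but the one for which the excursion-plus-arc cycle bounds a finite disc disjoint from the facets of $S$; the paper extracts its existence from a planarity result of Thomassen rather than by minimality.)

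The genuine gap is in part \eqref{top_c}. There you abandon the curvature budget in favour of a structural claim --- that the apex of the triangle on the $S$-ward side of $v_1v_2$ lies in $\partial S$, i.e.\ that $v_1v_2$ cannot sit one layer away from $\partial S$ --- and you explicitly state that you cannot prove it (``the main obstacle'', ``I expect this step to require a minimal-counterexample argument or an induction''). No such detour is needed: the budget you already set up for \eqref{top_b} closes the case directly, which is exactly how the paper argues. For the excursion $v_0v_1v_2v_3$ the formula gives $\sum_{i=0}^{3}\bigl(3-\tau_D(v_i)\bigr)\geq 6$ after discarding the non-positive interior and arc terms, i.e.\ $\sum_{i}\tau_D(v_i)\leq 6$. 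Non-repetition gives $\tau_D(v_1),\tau_D(v_2)\geq 1$, and this is precisely where the two ``non-facet'' hypotheses enter: if $\tau_D(v_1)=1$, the unique $D$-facet at $v_1$ is $\{v_0,v_1,v_2\}$, so $v_0\sim v_2$ and part \eqref{top_b} applied to the path $v_0v_2v_3$ forces the forbidden facet $\{v_0,v_2,v_3\}$; symmetrically, $\tau_D(v_2)=1$ forces $\{v_0,v_1,v_3\}$. Hence $\tau_D(v_1),\tau_D(v_2)\geq 2$ while $\tau_D(v_0),\tau_D(v_3)\geq 1$, so every inequality is an equality; in particular $\tau_D(v_0)=1$ and $\tau_D(v_1)=2$. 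The single $D$-facet at $v_0$ is $\{v_0,v_1,u\}$ with $u$ the neighbour of $v_0$ on the closing arc, so $u\in\partial S$; the two $D$-facets at $v_1$ share an edge $v_1x$ and are $\{v_0,v_1,x\}$ and $\{x,v_1,v_2\}$, whence $x=u$ and $\{u,v_1,v_2\}$ is the required facet. No minimal counterexample and no induction on the exterior disc is needed.
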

\begin{proof}
	See Appendix \ref{Sec_Proofsfromtopology}.
\end{proof}
\noindent This helps proving two more auxiliary lemmas.
\begin{lem}\label{Lem_NeighbourhoodIntersection}
    Let $m \geq 1$ and $\Delta_m \cong S = S_1 \union S_2 \union S_3$ with $S_i \cong \Delta_{m-1}$.
    Then, $\neig{G}{S_1} \cap \neig{G}{S_2} \cap \neig{G}{S_3} \subset S$.
\end{lem}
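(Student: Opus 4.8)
The plan is to argue by contradiction. Identifying $S$ with $\Delta_m$ via barycentric coordinates, the corner subtriangle $S_i$ consists of the vertices with $x_i\geq 1$, and $\partial S$ is the boundary cycle, isomorphic to $C_{3m}$. Suppose, for contradiction, that some $v\in\neig{G}{S_1}\cap\neig{G}{S_2}\cap\neig{G}{S_3}$ lies outside $S$; I will show this is impossible for every $m\geq 1$.

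The first step is a rigidity observation: every inner vertex $a$ of $S$ has all of its $G$-neighbours inside $S$. Such an $a$ has exactly six neighbours in $\Delta_m$, consecutive ones being adjacent, so $N_S(a)$ is a $6$-cycle. Since $N_G(a)$ is a circle containing this $6$-cycle as a subgraph, and a proper subset of the vertices of a circle induces a union of paths rather than a cycle, the two must coincide; hence $a$ has no neighbour outside $S$. Therefore, since $v\notin S$ but $v\in\neig{G}{S_i}$, the vertex $v$ is adjacent to some $a_i\in S_i$ that is forced to be a boundary vertex, $a_i\in S_i\cap\partial S$. Reading off the zero coordinate, $a_i$ lies on one of the two sides of $\partial S$ through the corner $C_i$, and differs from the two opposite corners.

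Next I feed the three paths $a_1va_2$, $a_2va_3$, $a_1va_3$, all with interior vertex $v\notin S$, into part~(\ref{top_b}) of Lemma~\ref{lem_topology}: for each pair the endpoints either coincide or span a boundary edge of $S$. I then split by the coincidence pattern of $a_1,a_2,a_3$. They cannot all coincide, since a common vertex would lie in $S_1\cap S_2\cap S_3\cap\partial S$, i.e. have all coordinates positive and yet a zero coordinate. If exactly two coincide, say $a_1=a_2$, then this vertex has $x_1,x_2\geq 1$, hence $x_3=0$, so it is interior to the side $C_1C_2$; its only boundary edges run along that side, so every boundary-neighbour also has $x_3=0$, contradicting $a_3\in S_3$ (which needs $x_3\geq 1$) together with $a_1a_3$ being a boundary edge.

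In the remaining case $a_1,a_2,a_3$ are distinct and pairwise span boundary edges, so they are pairwise adjacent inside the cycle $\partial S\cong C_{3m}$. For $m\geq 2$ this is impossible, as a cycle of length at least $6$ contains no triangle. For $m=1$ the $a_i$ are exactly the three corners $C_1,C_2,C_3$, which are pairwise adjacent and all adjacent to $v$; thus the triangle $C_1C_2C_3$ would embed as an induced subgraph into the circle $N_G(v)$, which is impossible because $\deg_G(v)\geq 6$. All cases yield a contradiction, so no such $v$ exists. I expect the main obstacle to be the careful bookkeeping of the two-coincidence case — pinning down which side each $a_i$ may occupy and checking that a non-corner boundary vertex has no boundary-neighbour off its own side — while the rigidity step and the small value $m=1$ must each be verified separately.
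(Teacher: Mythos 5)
Your proof is correct and follows essentially the same route as the paper's: both reduce the hypothetical outside vertex to having only boundary neighbours in $S$, feed the resulting length-two paths into Lemma~\ref{lem_topology}(\ref{top_b}), and derive a contradiction from the structure of $\partial S\cong C_{3m}$ (with the $m=1$ case killed by $N_G(v)$ being a cycle of length at least $6$). The only differences are organisational: you make the rigidity of inner vertices explicit and split cases by the coincidence pattern of $a_1,a_2,a_3$, where the paper instead normalises coordinates and splits on whether the chosen boundary neighbour is a corner.
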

\begin{proof}
	See Appendix \ref{Sec_Proofsfromtopology}.
\end{proof}

\begin{lem}\label{Lem_NeighbourhoodCycle}
    Let $S \cong \Delta_m$. Then, $\neig{G}{S} \ohne S$ is a cycle and 
    vertices in $\neig{G}{S}\ohne S$ are incident to at most three faces
    in $\neig{G}{S}$.
\end{lem}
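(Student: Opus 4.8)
The plan is to build the ring $\neig{G}{S}\ohne S$ explicitly by traversing the boundary cycle $\partial S$ once. First I would observe that only boundary vertices of $S$ contribute: if $v$ is an inner vertex of $S\cong\Delta_m$, its six $S$-neighbours already form a $6$-cycle inside $N_G(v)$, and since $G$ is locally cyclic, $N_G(v)$ is a circle; a $C_6$ occurs as a subgraph of a circle $C_n$ only for $n=6$, so $N_G(v)$ has length $6$ and all neighbours of $v$ lie in $S$. Hence $\neig{G}{S}\ohne S$ is precisely the set of vertices outside $S$ adjacent to some vertex of $\partial S$.

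Next I would fix the local picture at a boundary vertex $w$ with $\partial S$-neighbours $w^-,w^+$. Since $G$ is locally cyclic, $N_G(w)$ is a circle; the $S$-neighbours of $w$ form a single arc of it whose two ends are $w^-$ and $w^+$ (the facets of $S$ at $w$ form a fan, and by part~\ref{top_a} of Lemma~\ref{lem_topology} the vertex $w$ has no further boundary neighbour, so the complementary arc contains no $S$-vertex). That complementary ``outer'' arc is exactly $N_G(w)\cap(\neig{G}{S}\ohne S)$. Each boundary edge $ww^+$ is an edge of $G$, hence lies in exactly two facets; one is the facet of $S$, and the third vertex $u$ of the other facet is an outside vertex adjacent to both $w$ and $w^+$ sitting at the common end of the outer arcs of $w$ and of $w^+$. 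Concatenating the outer arcs as $w$ runs once around $\partial S$ therefore produces a closed walk $R$ in $\neig{G}{S}\ohne S$ with consecutive vertices adjacent, and every outside vertex lies on some outer arc, so $R$ meets all of $\neig{G}{S}\ohne S$.

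The decisive step, which I expect to be the main obstacle, is to show that $R$ is a genuine cycle rather than a walk that revisits a vertex --- exactly the degeneracy that a non-rigid embedding of $\Delta_m$ into an otherwise arbitrary locally cyclic graph could produce. I would rule out repetitions by bounding how a single outside vertex can attach to $\partial S$. If an outside vertex $u$ were adjacent to two boundary vertices $w,w'$, then part~\ref{top_b} of Lemma~\ref{lem_topology}, applied to the path $w\,u\,w'$, forces $w$ and $w'$ to be consecutive on $\partial S$ with $\{w,u,w'\}$ a facet; thus $u$ is one of the designated junction vertices and, since three vertices of $\partial S$ cannot be pairwise consecutive once $3m\geq4$, it cannot attach to a third boundary vertex. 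The only remaining way for $R$ to pinch is an outside vertex meeting boundary vertices spread over all three sides of the triangle, which is excluded by Lemma~\ref{Lem_NeighbourhoodIntersection}, placing any vertex adjacent to all three of $S_1,S_2,S_3$ inside $S$. The degenerate small cases $m=0$ (where $R=N_G(v)$ is already a circle) and $m=1$ (where a common neighbour of the three corners is again forbidden by Lemma~\ref{Lem_NeighbourhoodIntersection}) I would dispatch by hand. Together these give that $R$ is a simple cycle.

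For the facet bound I would use part~\ref{top_c} of Lemma~\ref{lem_topology}: for two consecutive ring vertices $u,u'$ the path $w\,u\,u'\,w'$ (with $w,w'$ their boundary neighbours) yields a boundary vertex $v$ with $\{v,u,u'\}$ a facet, so the facet on the inner side of each edge of $R$ is completed by a vertex of $\partial S$. Consequently, along the circle $N_G(u)$ the two $R$-neighbours $u^-,u^+$ of $u$ together with its one or two boundary neighbours appear consecutively, separated by these inner facets, with no further vertex of $\neig{G}{S}$ between them. The facets at $u$ lying in $\neig{G}{S}$ are therefore exactly the two facets $\{u,u^\pm,w\}$ towards its single boundary neighbour $w$, or the three facets $\{u,u^-,w\}$, $\{u,w,w'\}$, $\{u,w',u^+\}$ in the junction case --- at most three in either case, completing the proof.
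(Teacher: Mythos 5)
Your construction follows the paper's proof almost exactly: both traverse $\partial S$, take for each boundary vertex $b_i$ the arc $N_G(b_i)\ohne S$, glue consecutive arcs at the junction vertices $n_{i,i+1}$, and use Lemma \ref{lem_topology} together with Lemma \ref{Lem_NeighbourhoodIntersection} to control how outside vertices attach to $\partial S$. The step ruling out revisits via part \eqref{top_b} is correct, as is the reduction to boundary vertices and the treatment of $m\in\{0,1\}$.

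There is, however, one step you do not actually close, and it is needed for both halves of the statement: you show that $R$ is a \emph{simple spanning} closed walk of $\neig{G}{S}\ohne S$, but the lemma asserts that this graph \emph{is} a cycle, so you must also exclude chords, i.e.\ edges between outside vertices that are not consecutive on $R$. You invoke part \eqref{top_c} only for consecutive ring vertices (to produce the inner facets), and your assertion that $N_G(u)$ contains ``no further vertex of $\neig{G}{S}$'' beyond $u^-$, $u^+$ and the boundary neighbours is exactly the chord-free claim restated, not derived --- a vertex on the far arc of $N_G(u)$ could a priori be adjacent to some distant $b_j$ and hence lie in $\neig{G}{S}$, which would both break the cycle claim and add facets at $u$. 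The paper closes this with one more application of part \eqref{top_c}: if $u,u''\notin S$ are adjacent and have no common neighbour on $\partial S$, choose $w,w''\in\partial S$ with $uw, u''w''\in E$; then $w\,u\,u''\,w''$ is non-repeating and neither $\{w,u,w''\}$ nor $\{w,u'',w''\}$ is a facet (either would produce a common boundary neighbour), so part \eqref{top_c} yields a $v\in\partial S$ with $\{v,u,u''\}$ a facet --- a contradiction. Hence any two adjacent outside vertices share a boundary neighbour $v$ and, being adjacent vertices of the circle $N_G(v)$, are consecutive on its outer arc and therefore on $R$. Adding this observation makes both your cycle claim and your facet count complete; without it the proof establishes only a spanning cycle, which is strictly weaker.
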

\begin{proof}
	See Appendix \ref{Sec_Proofsfromtopology}.
\end{proof}

\section{The graph $G_n$}\label{Sect_Graph}

\noindent We construct a graph sequence $G_n$ for every \mammal\enspace $G$ in a geometric way (see Def. \ref{Def_theCliqueGraph}).

In Subsection \ref{Subsect_Triangles}, we prove some general
statements concerning the relation between different triangular-shaped subgraphs.
These will be helpful in all further analyses.
In Subsection \ref{Subsect_CliqueConstruction}, 
we construct some cliques of $G_n$ explicitly. This is the first step on our way to  prove $G_n\cong k^nG$ inductively. 

If not otherwise stated, from now on $G$ will always refer to a
\mammal\enspace and $G_n$ will be the geometric clique graph
defined in Definition \ref{Def_theCliqueGraph}.

\begin{defi}\label{Def_theCliqueGraph}
    Let $G$ be a \mammal. For a non-negative integer $n$, the 
    \emph{geometric clique graph} $\mathemph{G_n}$ has the following form:
    \begin{itemize}
        \item Its vertices are the subgraphs of $G$ isomorphic to 
            triangle graphs $\Delta_m$  with $m \leq n$, where
            $m$ and $n$ have the same parity. 
        \item Its edges are defined as follows:
            \begin{enumerate}
                \item Two subgraphs (of $G$) $S_1 \cong \Delta_m$ and $S_2 \cong \Delta_m$
                    are adjacent (in $G_n$) if $S_1 \subset \neig{G}{S_2}$ or 
                    $S_2 \subset \neig{G}{S_1}$.\footnote{These two conditions are in fact 
                    equivalent. This is a direct consequence of Lemma 
                    \ref{Lem_HexagonalChartExtension}, shown later.}
                \item Two subgraphs $S_1 \cong \Delta_m$ and $S_2 \cong \Delta_{m-2}$
                    (with $m \geq 2$) are adjacent if $S_2 \subset S_1$.
                \item Two subgraphs $S_1 \cong \Delta_m$ and $S_2 \cong \Delta_{m-4}$
                    (with $m \geq 4$) are adjacent if $S_2 \subset S_1$ and
                    $S_2$ does not contain any vertex $\partial S_1$, i.\,e. $S_2\cap \partial S_1=\emptyset$.
                \item Two subgraphs $S_1 \cong \Delta_m$ and $S_2 \cong \Delta_{m-6}$
                    (with $m \geq 6$) are adjacent if $S_2 \subset S_1$ and
                    $S_2$ does not contain any vertex with distance at most 1 
                    from the boundary of $\partial S_1$, i.\,e. $S_2\cap N_G[\partial S_1]=\emptyset.$
            \end{enumerate}
    \end{itemize}
A subgraph $S\cong \Delta_m$ 
of $G$ is said to be of \emph{level} $\mathemph{m}$ in $G_n$.
\end{defi}	

\noindent Clearly, $G_0 = G$. Thus we can try to prove $G_n=k^nG$ by induction.

\begin{ex}\label{Ex_CliqueGraphAdjacency_four}
	The subgraph $\Delta_4$ of the \mammal\enspace $\Hex_4$ is a vertex of every geometric clique graph $(\Hex_4)_n$ with an even $n\geq 4$. The adjacent vertices of level $0$ are the three $\Delta_0$ that are depicted in blue in Figure \ref{Fig_DeltaFour} and the adjacent vertices of level $2$ are the ``face-down`` $\Delta_2$, which is depicted in red, and the six ``face-up'' $\Delta_2$, two of which are depicted in yellow.

    \begin{figure}[htbp]
    	\centering
        \begin{tikzpicture}[scale=0.5]
        \HexagonalCoordinates{4}{4}
        
        \foreach \a/\b/\c in {00/20/02, 11/31/13}{
                \fill[yellow] (A\a) -- (A\b) -- (A\c) -- cycle;
        }
        
        \draw
            (A00) -- (A40) -- (A04) -- (A00)
            (A01) -- (A31) -- (A30) -- (A03) -- (A13) -- (A10) -- (A01)
            (A02) -- (A20) -- (A22) -- (A02);
        
        \draw[very thick, red] (A20) -- (A22) -- (A02) -- cycle;
        
        \foreach \p in {11, 21, 12}{
            \fill[blue] (A\p) circle (4pt);
        }
        \end{tikzpicture}
        \caption{The (types of) subgraphs of level $0$ and $2$ of $\Hex_4$ that are adjacent to $\Delta_4$ in any $(\Hex_4)_n$ with an even $n\geq4$}
        \label{Fig_DeltaFour}
    \end{figure}
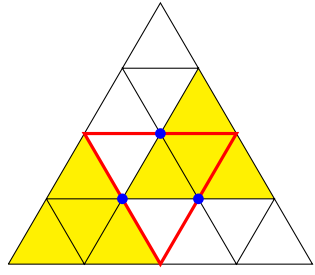

\end{ex}

\subsection{Properties of Triangles}\label{Subsect_Triangles}
In this subsection, we collect some technical results about triangles
and their relations.

\begin{rem}\label{Rem_TriangleInclusionBoundaryDistance}
    Let $m \geq 3$.
    The vertices of $\Delta_m$ with distance at least 1 to the boundary 
    induce the graph 
    $\Delta_m\setminus \partial \Delta_m\cong\Delta_{m-3}$. Thus, for 
    $m \geq 6$, the vertices with
    distance at least 2 induce 
    $\Delta_m\setminus N_G[\partial \Delta_m] \cong\Delta_{m-6}$.
\end{rem}

\noindent We define some graphs and vertex sets for future reference.
     The set
    \begin{align*}
	\mathemph{\oldvec{E}}&\isdef V_1\cap\Z_{\geq 0}^3=\{(1,0,0),(0,1,0),(0,0,1)\}\intertext{is the canonical basis, the graph}
	\mathemph{\nabla_{1}}&\isdef\Hex_2[(1,1,0),(0,1,1),(1,0,1)]\intertext{is the downward triangle of side length 1 in the centre of $\Delta_2$, the graphs}
        \mathemph{\nabla_{1}^{\oldvec{e}}}&\isdef\Hex_3[(1,1,0)+\vec{e},(0,1,1)+\vec{e},(1,0,1)+\vec{e}]\intertext{with $\vec{e}\in\vec{E}$ are the downward triangles of side length 1 inside $\Delta_3$, and the graph} 
	\mathemph{\nabla_{2}}&\isdef\Hex_4[(2,2,0),(0,2,2),(2,0,2)]
    \end{align*}
    is the downward triangle of side length 2 in the centre of $\Delta_4$.

The following two auxiliary lemmas discuss small special cases. 

\begin{lem}\label{Lem_TriangleInclusionOne}
    Let $m \geq 1$ and consider $\Delta_m \subset \Hex_m$. If
    $\Delta_{m-1} \cong S \subset \Delta_m$, either
    \begin{enumerate}
	\item $S$ is the image of $\Delta_{m-1}^{\vec{e}}$ with $\vec{e} \in \vec{E}$, or
	\item $m = 2$ and $S=\nabla_1$.
    \end{enumerate}
    In particular, $\Delta_m \subset \neig{G}{S}$.
\end{lem}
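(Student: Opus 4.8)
The plan is to turn the abstract isomorphism $S\cong\Delta_{m-1}$ into rigid coordinate information by tracking the three \emph{sides} of $S$ and then solving a short system for the positions of these sides. First I would dispose of the degenerate cases $m\in\{1,2\}$ by direct inspection: for $m=1$ a copy of $\Delta_0$ is a single vertex of $\Delta_1$, and the three vertices $(1,0,0),(0,1,0),(0,0,1)$ of $\Delta_1$ are exactly the images of $\Delta_0^{\vec e}$ with $\vec e\in\vec E$; for $m=2$ a copy of $\Delta_1$ is a $3$-clique (facet) of $\Delta_2$, and the only facets of $\Delta_2$ are the three corner triangles $\Delta_1^{\vec e}$ together with the central downward triangle $\nabla_1$. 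This already exhibits both alternatives, so from now on I assume $m\ge 3$ and aim to exclude the exceptional case entirely.

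For $m\ge 3$, the three sides $P_1,P_2,P_3$ of $S$ (the images of the three boundary sides of $\Delta_{m-1}$ under an isomorphism) are straight paths of length $m-1$ in $S$, since each non-corner vertex of a side is a boundary vertex of $S$ with path degree $\{3\}$. Because the inclusion $S\hookrightarrow\Delta_m$ is a monomorphism of locally cyclic graphs with boundary, it preserves the facet count on each side of a path (Subsection \ref{Subsect_StraightPaths}); in particular each $P_i$ remains straight in $\Delta_m$. By Remark \ref{Rem_StraightPathsInTriangle}, every straight path of length $m-1$ in $\Delta_m$ lies inside a maximal straight path of length $m$ or $m-1$, i.e. on a row of $\Delta_m$ along which one coordinate $x_k$ is constant and equal to $0$ (an $\alpha$-side) or $1$ (a $\beta$-path). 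Moreover the three sides use three \emph{distinct} directions: two sides meeting at a corner of $S$ share a vertex, whereas two distinct parallel rows are disjoint. Hence, after relabelling, $P_k$ lies on the row $x_k=c_k$ with $c_k\in\{0,1\}$.

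It then remains to solve for the $c_k$. The side $P_k$ runs between the two corners of $S$ obtained by intersecting the other two bounding rows, so its length equals $\lvert\,m-(c_1+c_2+c_3)\,\rvert$. Setting this equal to $m-1$ forces $c_1+c_2+c_3=1$ or $c_1+c_2+c_3=2m-1$. The second value exceeds $3\ge c_1+c_2+c_3$ for $m\ge 3$ and is therefore impossible, while $c_1+c_2+c_3=1$ means exactly one $c_k$ equals $1$. Reading off the three corners (e.g. $c_1=1$, $c_2=c_3=0$ yields the corners $(m,0,0)$, $(1,m-1,0)$, $(1,0,m-1)$) identifies $S$ with the image of $\Delta_{m-1}^{\vec e}$, $\vec e\in\vec E$, which is the first alternative; no downward copy survives for $m\ge3$, and for $m=2$ the excluded value $2m-1=3$ is exactly what reproduces $\nabla_1$, consistent with the base case.

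Finally, the supplementary claim $\Delta_m\subset\neig{G}{S}$ is immediate from the classification and may be checked inside $\Delta_m$, since adjacencies there are adjacencies of $G$. If $S$ is the corner copy with $x_1\ge 1$, then every vertex of $\Delta_m\setminus S$ has the form $(0,x_2,x_3)$ with $x_2+x_3=m\ge 1$; as $x_2\ge 1$ or $x_3\ge 1$, it has the $S$-neighbour $(1,x_2-1,x_3)$ or $(1,x_2,x_3-1)$. The case $S=\nabla_1$ is the analogous three-vertex check for the corners of $\Delta_2$. I expect the only genuinely delicate point to be the rigidity step of the second paragraph---that straightness of the sides of $S$ is inherited by the ambient $\Delta_m$---since this is exactly what converts a purely combinatorial isomorphism into the coordinate constraints; everything after it is elementary bookkeeping.
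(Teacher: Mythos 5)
Your proof is correct and follows essentially the same route as the paper's: both arguments reduce to Remark \ref{Rem_StraightPathsInTriangle}, identifying the three sides of $S$ with straight paths of length $m-1$ in $\Delta_m$ (on rows where one coordinate is constantly $0$ or $1$) and then determining which triples of such rows can bound a copy of $\Delta_{m-1}$. You are in fact somewhat more careful than the paper, since you treat the cases $m\in\{1,2\}$ (where the Remark does not apply) separately, you make explicit that straightness of the sides of $S$ is inherited by $\Delta_m$, and you verify the concluding claim $\Delta_m\subset\neig{G}{S}$ explicitly.
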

\begin{proof}
	See Appendix \ref{Sect_propoftriang}
\end{proof}

\begin{lem}\label{Lem_TriangleInclusionTwo}
    Consider $\Delta_m \subset\Hex_m$ with $m\geq 2$.
    If $\Delta_{m-2} \cong S \subset \Delta_m$, either
    \begin{enumerate}
	\item $S$ is the image of $\Delta_{m-2}^{\vec{f}}$ with $\vec{f} \in \vec{E}+\vec{E}=V_2\cap\Z_{\geq 0}^3$, 
	\item $m=3$ and $S=\nabla_{1}^{\vec{e}}$ for
	    some $\vec{e} \in \vec{E}$, or
	\item $m=4$ and $S=\nabla_{2}$.
    \end{enumerate}
\end{lem}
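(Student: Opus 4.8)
The plan is to show that every induced copy $\Delta_{m-2}\cong S\subset\Delta_m$ is a \emph{lattice triangle} in $\Hex_m$ — a triangular region spanned by two of the six edge directions in $\vec{D}_0$ — and then to read off its orientation. The case $m=2$ is immediate: here $\Delta_{m-2}=\Delta_0$ is a single vertex, every vertex of $\Delta_2$ lies in $V_2\cap\Z_{\geq 0}^3=\vec{E}+\vec{E}$, and so $S$ is the image of $\Delta_0^{\vec{f}}$ for the corresponding $\vec{f}$, giving case (1). For $m\geq 3$ I first argue that each of the three sides of $S$ is a \emph{straight} lattice segment, i.e.\ runs in a constant direction $\vec{d}\in\vec{D}_0$. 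For a non-corner side vertex $v$ of $S$, the induced neighbourhood $N_S(v)$ is a path on four vertices whose two ends are exactly the two neighbours of $v$ along the side. Since $S$ is induced and $\deg_{\Delta_m}(v)\leq 6$, the vertex $v$ is either an inner vertex of $\Delta_m$ — so $N_S(v)$ is four consecutive vertices of the hexagon $N_{\Delta_m}(v)$, which forces the two side-neighbours into the antipodal configuration $v\pm\vec{d}$ — or a non-corner boundary vertex of $\Delta_m$, in which case $N_S(v)=N_{\Delta_m}(v)$ and an explicit look at the boundary shows its two ends are again $v\pm\vec{d}$. (A corner of $\Delta_m$ is excluded, as it has degree $2<4$.) Hence the direction is preserved at every interior side vertex, so each side is straight.

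Once the three sides are straight, $S$ is determined by one corner $c_0$ and the two emanating edge directions $\vec{d}_1,\vec{d}_2\in\vec{D}_0$, namely $S=\{c_0+a\vec{d}_1+b\vec{d}_2 : a,b\geq 0,\ a+b\leq m-2\}$. As the corner-neighbours $c_0+\vec{d}_1$ and $c_0+\vec{d}_2$ are adjacent in $\Delta_{m-2}$, the triple $\{c_0,c_0+\vec{d}_1,c_0+\vec{d}_2\}$ is a facet of $\Delta_m$, and its orientation (upward or downward) is the orientation of $S$. If it points upward, then $S$ is, as a vertex set, a translate of the standard $\Delta_{m-2}\subset\Hex_{m-2}$, say $S=\Image(\Delta_{m-2}^{\vec{f}})$; testing nonnegativity on the three corner images forces $\vec{f}\geq 0$ and $\sum\vec{f}=2$, i.e.\ $\vec{f}\in\vec{E}+\vec{E}$, which is case (1).

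It remains to treat the downward orientation. Fixing the down pair $\vec{d}_1=(-1,0,1),\ \vec{d}_2=(0,-1,1)$ — each downward triangle has a unique corner with these outgoing directions, so ranging $c_0$ over admissible base points enumerates all of them without loss — the extreme vertices $c_0+(m-2)\vec{d}_1$ and $c_0+(m-2)\vec{d}_2$ force $c_{0,1}\geq m-2$, $c_{0,2}\geq m-2$ and $c_{0,3}\geq 0$. Combined with $\sum c_0=m$ this yields $m\geq 2(m-2)$, hence $m\leq 4$. For $m=4$ the only admissible corner is $c_0=(2,2,0)$, giving $S=\nabla_2$ (case (3)); for $m=3$ the admissible corners are $(2,1,0),(1,2,0),(1,1,1)$, producing exactly the three downward unit triangles $\nabla_1^{\vec{e}}$ with $\vec{e}\in\vec{E}$ (case (2)). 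In particular, for $m\geq 5$ the downward case cannot occur, so only case (1) survives.

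I expect the straightness step to be the main obstacle. The path-degree and monomorphism machinery of Subsection \ref{Subsect_StraightPaths} applies directly only to inner vertices, whereas the sides of $S$ consist of boundary vertices of $S$; the preservation of the side direction therefore has to be extracted by hand from the explicit local structure of $N_{\Delta_m}(v)$, separately in the inner and in the boundary subcase. Once this local rigidity is secured, the split into the upward translate case and the coordinate bound $m\geq 2(m-2)$ for the downward case, together with the small-case enumeration, is routine.
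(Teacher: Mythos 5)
Your proof is correct and shares the paper's overall strategy: reduce to the fact that the three sides of $S$ are straight lattice segments, then split into the ``upward'' case (images of triangle inclusion maps, giving case (1)) and the ``downward'' case, which only survives for small $m$. The execution differs in both halves, though. For straightness, the paper's proof simply invokes Remark \ref{Rem_StraightPathsInTriangle} (the boundary of a $\Delta_{m-2}$ consists of three straight paths of length $m-2$, and up to symmetry these are restrictions of $\alpha$, $\beta$, $\gamma$), whereas you re-derive it from scratch via the observation that an induced $P_4$ in a $6$-cycle, respectively in the path-neighbourhood of a non-corner boundary vertex, has antipodal endpoints $v\pm\vec{d}$. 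For the downward case, the paper enumerates the four relevant straight paths one by one and checks which downward triangle each can bound, extracting $m\leq 4$ from the fact that $\gamma$ has room ``below'' only when $m-2\leq 2$; you instead parametrise a downward triangle by its distinguished corner $c_0$ and obtain $m\leq 4$ in one stroke from $m=\sum c_0\geq 2(m-2)$, which is arguably cleaner and makes it transparent why exactly $m\in\{3,4\}$ admits the exceptional triangles $\nabla_1^{\vec{e}}$ and $\nabla_2$. The paper's route has the advantage of reusing Remark \ref{Rem_StraightPathsInTriangle}, which it needs elsewhere anyway. The one step you gloss over --- that three straight sides force $S$ to equal the full lattice triangle $\{c_0+a\vec{d}_1+b\vec{d}_2 : a,b\geq 0,\ a+b\leq m-2\}$ including its interior vertices --- is equally implicit in the paper's own proof and is routine to fill (for instance by comparing vertex counts of $S\cong\Delta_{m-2}$ with the number of lattice points enclosed by its boundary).
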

\begin{proof}
	See Appendix \ref{Sect_propoftriang}
\end{proof}

\subsection{Clique construction of $G_n$}\label{Subsect_CliqueConstruction}
In this subsection, we construct different cliques of $G_n$. The constructed 
cliques fall into two classes; those that are formed from three $\Delta_m$ within
one $\Delta_{m+1}$ (Lemma \ref{Lem_CliqueConstructionTriangle}), 
and those that are formed by all $\Delta_1$ incident to a vertex
(Lemma \ref{Lem_CliqueConstructionVertex}).

In the the next lemma, we employ a shorthand: For a hexagonal chart
$\mu: \Delta_{m+1} \to S$ and $(t_1,t_2,t_3) \in \Z^3$, we denote
the image of $\mu \circ \Delta_{m+1-t_1-t_2-t_3}^{t_1,t_2,t_3}$ by
$\mathemph{\mu^{t_1,t_2,t_3}}$.

\begin{lem}\label{Lem_CliqueConstructionTriangle}
    Let $G$ be a \mammal\enspace and 
    $\Delta_{m+1} \iso{\mu} S \subset G$ a hexagonal chart with
    $m \leq n$ and $m\equiv_2 n$. 
    The common
    neighbourhood $\cneig{G_n}{\mu^{1,0,0},\mu^{0,1,0},\mu^{0,0,1}}$
    forms a clique in $G_n$.
\end{lem}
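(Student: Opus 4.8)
The plan is to pull everything back through the chart $\mu$, classify the vertices of $G_n$ that lie in $C\isdef\cneig{G_n}{\mu^{1,0,0},\mu^{0,1,0},\mu^{0,0,1}}$ according to their level, and then check that any two admissible types are adjacent. Write $S_1,S_2,S_3$ for the three corner subtriangles $\mu^{1,0,0},\mu^{0,1,0},\mu^{0,0,1}$, each isomorphic to $\Delta_m$, and observe $S=S_1\union S_2\union S_3$ since every vertex of $\Delta_{m+1}$ has a positive coordinate. As each $T\in C$ is adjacent to all three $S_i$ (of level $m$), its level $m'$ differs from $m$ by $0$, $2$, $4$ or $6$ (Definition \ref{Def_theCliqueGraph}); levels exceeding $n$ simply do not occur as vertices of $G_n$.

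For the inward levels $m'\leq m$ I would first confine $T$ to $S$. If $m'=m$, the equivalence noted in Definition \ref{Def_theCliqueGraph} gives $T\subseteq\neig{G}{S_i}$ for each $i$, so $T\subseteq\neig{G}{S_1}\schnitt\neig{G}{S_2}\schnitt\neig{G}{S_3}\subseteq S$ by Lemma \ref{Lem_NeighbourhoodIntersection}, and Lemma \ref{Lem_TriangleInclusionOne} identifies $T$ as one of $S_1,S_2,S_3$ (or, only when $m=1$, as the central downward triangle $\nabla_1$). If $m'<m$, clauses (2)--(4) of Definition \ref{Def_theCliqueGraph} give $T\subseteq S_i$ for each $i$, hence $T\subseteq S_1\schnitt S_2\schnitt S_3$, which is the central $\Delta_{m-2}$. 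For $m'=m-2$ a cardinality count forces $T$ to equal this central triangle; for $m'=m-4$ and $m'=m-6$ the extra boundary-distance conditions push $T$ into the intersection of the distance $\geq 1$, respectively distance $\geq 2$, interiors of the $S_i$, which by Remark \ref{Rem_TriangleInclusionBoundaryDistance} and a coordinate count is too small to contain $\Delta_{m-4}$, respectively $\Delta_{m-6}$. So the inward part of $C$ is exactly the corners together with the central $\Delta_{m-2}$.

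For the outward levels $m'>m$ every $S_i$ lies in $T$, so $S\subseteq T$. When $m'=m+2$, Lemma \ref{Lem_TriangleInclusionOne} applied inside $T$ shows $S$ is a corner subtriangle of $T$, so $T$ arises from $S$ by adjoining a single side and there are at most three such triangles. When $m'=m+4$, clause (3) forces $S$ into the distance $\geq 1$ interior of $T$, which is a $\Delta_{m+1}$ by Remark \ref{Rem_TriangleInclusionBoundaryDistance}; by cardinality $S$ equals this interior, so $T$ is uniquely determined, its outer ring being the cycle $\neig{G}{S}\ohne S$ of Lemma \ref{Lem_NeighbourhoodCycle}. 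Finally $m'=m+6$ is impossible, as clause (4) would force each of the three distinct $S_i$ to coincide with the single distance $\geq 2$ interior $\cong\Delta_m$ of $T$. Thus every element of $C$ is the central $\Delta_{m-2}$, a corner (or $\nabla_1$), a one-sided extension $\Delta_{m+2}$, or the interior-filling $\Delta_{m+4}$.

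It then remains to verify pairwise adjacency. The inclusions among the central triangle, the corners, $S$, the extensions, and the interior-filling triangle yield all inclusion-type adjacencies (clauses (2)--(4)) at once, the boundary-distance conditions following from the barycentric positions (e.g. the central $\Delta_{m-2}$ sits at distance $\geq 1$ from the boundary of any $\Delta_{m+2}$ and at distance $\geq 2$ from that of the $\Delta_{m+4}$). The only adjacencies that are not pure inclusions are the same-level pairs. For two corners I would use the ``in particular'' clause of Lemma \ref{Lem_TriangleInclusionOne}, giving $S_i\subseteq S\subseteq\neig{G}{S_j}$. For two distinct extensions $T,T'$ I would show $T\subseteq\neig{G}{T'}$ by noting that the side of $T$ lying outside $S$ consists of vertices each adjacent, inside the chart of $T$, to a vertex of $S\subseteq T'$. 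I expect the main obstacle to be exactly these outward levels: Lemma \ref{Lem_NeighbourhoodIntersection} confines only the inward elements, so the positions of the $\Delta_{m+2}$ and $\Delta_{m+4}$ relative to $S$ have to be pinned down through the inclusion lemmas read off in their own charts, and the extension--extension adjacency is the single check that genuinely compares two overlapping charts rather than reducing to one containment. The small cases $m\in\{0,1\}$ are dispatched by direct inspection.
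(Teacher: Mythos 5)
Your proposal is correct and follows essentially the same route as the paper's proof: classify the elements of the common neighbourhood by level, use Lemma \ref{Lem_NeighbourhoodIntersection} to confine the level-$m$ elements to $S$, use Lemma \ref{Lem_TriangleInclusionOne} and Remark \ref{Rem_TriangleInclusionBoundaryDistance} to pin down or exclude the levels $m\pm 2$, $m\pm 4$, $m\pm 6$, and then verify pairwise adjacency via the resulting inclusions and the containment $T\subseteq\neig{G}{S}\subseteq\neig{G}{T'}$. The only differences are cosmetic (you phrase the inward exclusions via intersections of interiors where the paper works inside $\mu^{1,1,1}$ directly), so no further comment is needed.
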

\begin{proof}
    By Definition \ref{Def_theCliqueGraph}, the $\mu^{\vec{e}}$ with $\vec{e}\in\vec{E}$
    are vertices of $G_n$.
    They are all contained in $S \subset \neig{G}{\mu^{\vec{e}}}$ 
    (by Lemma \ref{Lem_TriangleInclusionOne}).
    Thus, by Definition \ref{Def_theCliqueGraph}, 
    they are all adjacent to each other.
    The first step of the proof is finding 
    all elements in the common neighbourhood.
    Let $T$ be in $\cneig{G_n}{\mu^{1,0,0},\mu^{0,1,0},\mu^{0,0,1}}\setminus\{\mu^{1,0,0},\mu^{0,1,0},\mu^{0,0,1}\}$.

    \begin{enumerate}
        \item If $T \cong \Delta_{m-k}$ for $k \in \{2,4,6\}$ and $k\leq m$, 
            by Definition \ref{Def_theCliqueGraph}, 
            $T \subset\mu^{1,0,0}\cap \mu^{0,1,0}\cap \mu^{0,0,1}$. 
            For $m\in \{0,1\}$ we have 
            $\mu^{1,0,0}\cap \mu^{0,1,0}\cap \mu^{0,0,1}=\emptyset$, 
            thus this is a contradiction.
            For $m \geq 2$, we have 
            $\mu^{1,0,0}\cap \mu^{0,1,0}\cap \mu^{0,0,1}=\mu^{1,1,1} \cong \Delta_{m-2}$. 
            We distinguish between the possible values of $k$.
            \begin{enumerate}
            	\item  $k=2$: We conclude $T = \mu^{1,1,1}$.
            	\item $k=4$: We have 
            	$T\subseteq \mu^{\vec{e}}\setminus\partial \mu^{\vec{e}}$ 
            	for the three $\vec{e}\in\vec{E}$. Thus, by Remark 
            	\ref{Rem_TriangleInclusionBoundaryDistance},\\ 
            	$\Delta_{m-4}\cong T\subseteq \mu^{1,1,1}\setminus\partial\mu^{1,1,1}\cong \Delta_{m-5}$, 
            	which is impossible.
            	\item $k=6$: We have 
            	$T\subseteq \mu^{\vec{e}}\setminus\partial \neig{G}{\mu^{\vec{e}}}$ 
            	for the three $\vec{e}\in\vec{E}$. Thus, by Remark 
            	\ref{Rem_TriangleInclusionBoundaryDistance}, 
            	$\Delta_{m-6}\cong T\subseteq \mu^{1,1,1}\setminus\partial\neig{G}{\mu^{1,1,1}}\cong \Delta_{m-8}$, 
            	which is impossible.
            \end{enumerate}
	    
            We  conclude $m\geq k=2$ and $T = \mu^{1,0,0}\cap \mu^{0,1,0}\cap \mu^{0,0,1}=\mu^{1,1,1}$.
        \item If $T \cong \Delta_m$, by Definition \ref{Def_theCliqueGraph}, 
            $T \subset \neig{G}{\mu^{1,0,0}}\cap \neig{G}{\mu^{0,1,0}}\cap \neig{G}{\mu^{0,0,1}}$. 
            Since by  Lemma \ref{Lem_NeighbourhoodIntersection}, 
            $S=\neig{G}{\mu^{1,0,0}}\cap \neig{G}{\mu^{0,1,0}}\cap \neig{G}{\mu^{0,0,1}}$, 
            Lemma \ref{Lem_TriangleInclusionOne} shows that $T$ can 
            only appear if $m = 1$ (in which case it comes from a $\nabla_1$).
            
            In particular, it is never part of the common neighbourhood
            if $\mu^{1,1,1}$ is.
        \item If $T \cong \Delta_{m+k}$ for $k \in \{2,4,6\}$, we have
            $\Delta_{m+1}\cong S = \mu^{1,0,0}\cup\mu^{0,1,0}\cup\mu^{0,0,1}\subset T$.
             Again, we distinguish between the possible values of $k$.
            \begin{enumerate}
            	\item  $k=2$:  We employ Lemma \ref{Lem_TriangleInclusionOne} 
            	to describe
            	how $S$ can lie in $T$. By the same lemma and 
            	Definition \ref{Def_theCliqueGraph},
            	all of these $T$ are pairwise adjacent.
            	
            	Consider adjacency to smaller triangles: 
            	If $m=1$, the additional $\Delta_m$ from Lemma \ref{Lem_TriangleInclusionOne} 
            	lies in $S$ and is thus adjacent to all $T$.
            	If $m \geq 2$, the intersection
            	$\mu^{1,1,1} \cong \Delta_{m-2}$ has distance 1
            	to $\partial S$. Thus, it also
            	has distance 1 to $\partial T$ and it is therefore
            	adjacent to all of them.
            	\item  $k=4$: The subgraphs $\mu^{\vec{e}}$ need to have distance 1 to
            	the boundary of $T$. Thus,
            	$S$ also has this distance. By
            	Remark \ref{Rem_TriangleInclusionBoundaryDistance},
            	this uniquely defines $T$  with $\neig{G}{S} \subset T$.
            	
            	Since the additional $\Delta_{m+2}$ lie in $\neig{G}{S}$
            	(Lemma \ref{Lem_TriangleInclusionOne}),
            	they are adjacent to $T$. For $m=1$, the additional
            	$\Delta_m$ lies in
            	$S$ and has distance 1 from the boundary of $T$.
            	For $m \geq 2$, the intersection
            	$\mu^{1,1,1}$ has distance 1 from $\partial S$.
            	Since $S$ has distance 1 from
            	$\partial T$, the total distance between
            	$\mu^{1,1,1}$ and $\partial T$ is 2, showing
            	their adjacency.
            	\item  $k=6$: By Remark 
            	\ref{Rem_TriangleInclusionBoundaryDistance}, there is only
            	one embedding $\Delta_m \to \Delta_{m+6}$ with distance 2 
            	to the boundary. Thus,
            	there is no such element adjacent to 
            	all $\mu^{\vec{e}}$ simultaneously. 
            \end{enumerate}
    \end{enumerate}
Finally, we conclude that $\cneig{G_n}{\mu^{1,0,0},\mu^{0,1,0},\mu^{0,0,1}}$ 
is a clique of $G_n$.\qedhere
\end{proof}

\noindent After having covered the triangle case, we now cover the vertex case.
The neighbours of a vertex $v$ form a circle $w_1w_2\dots w_k$ for
some $k\in \N$. 
    The \emph{umbrella} of $v$ is the set containing the $k$
    facets $\{v,w_i,w_{i+1}\}$ for all $1 \leq i \leq k$, where
    we read indices modulo $k$.

\begin{lem}\label{Lem_CliqueConstructionVertex}
    Let $G$ be a \mammal\enspace and $v$ a vertex in $G$.
    For odd $n$, the common neighbourhood $\cneig{G_n}{T\cong \Delta_1\mid v\subseteq \Delta_1}$ of all
    $\Delta_1$ containing $v$ forms a clique in $G_n$.
\end{lem}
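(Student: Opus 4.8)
The plan is to identify the clique explicitly and then verify completeness and maximality. Write $N_G(v) = w_1w_2\cdots w_k$ with $k = \deg(v) \ge 6$ (a chordless cycle, since $G$ is locally cyclic), reading indices mod $k$. The $\Delta_1$-subgraphs of $G$ are precisely its facets, so the $\Delta_1$ containing $v$ are exactly the umbrella facets $T_i = \{v, w_i, w_{i+1}\}$, and $\bigcup_i T_i = N_G[v]$. Any two of them share $v$, hence $T_i \subset \neig{G}{T_j}$; by Definition \ref{Def_theCliqueGraph} (and Lemma \ref{Lem_HexagonalChartExtension}, which makes the two level-$m$ conditions equivalent) the $T_i$ are pairwise adjacent in $G_n$ and therefore all lie in $C := \cneig{G_n}{T\cong \Delta_1\mid v\subseteq T}$.

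First I would bound the possible levels of an element $T \in C$. Its level $\ell$ is odd (same parity as $n$), and adjacency to the level-$1$ generators forces $\ell \in \{1,3,5,7\}$; for $\ell > 1$ this adjacency means $T_i \subset T$ with the boundary distance prescribed in Definition \ref{Def_theCliqueGraph}. If $\ell = 5$, then every $T_i$, hence all of $N_G[v]$, would lie at distance $\ge 1$ from $\partial T$, i.e.\ inside a copy of $\Delta_2$ by Remark \ref{Rem_TriangleInclusionBoundaryDistance}; if $\ell = 7$, it would lie inside a copy of $\Delta_1$. As $|N_G[v]| = 1 + k \ge 7$ exceeds both $6$ and $3$, these cases are impossible, so $\ell \in \{1,3\}$.

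Next I would determine the two surviving levels. For $\ell = 1$ an element $T \in C$ satisfies $T \subset \bigcap_i \neig{G}{T_i}$, so it suffices to show $\bigcap_i \neig{G}{T_i} = N_G[v]$. Indeed $N_G[v]$ lies in every $\neig{G}{T_i} = N_G[v] \cup N_G[w_i] \cup N_G[w_{i+1}]$, while a vertex $x \notin N_G[v]$ lies in the intersection only if it is adjacent to an endpoint of every rim edge $w_iw_{i+1}$. Here enters the key topological input, that two vertices of a \mammal\ have at most two common neighbours, which are then consecutive on each link (this is where Lemma \ref{lem_topology} and simple connectivity are used); it leaves at least $k - 2 \ge 4$ consecutive rim vertices non-adjacent to $x$, so some rim edge is missed and no such $x$ exists. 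Since $N_G(v)$ is chordless, the only facets inside $N_G[v]$ are the $T_i$ themselves. For $\ell = 3$ the containment $N_G[v] \subset T \cong \Delta_3$ forces $v$ to be the unique inner vertex of $T$ (the only vertex with six neighbours inside $\Delta_3$, by Remark \ref{Rem_TriangleInclusionBoundaryDistance}), whence $\deg(v) = 6$; such a $T$ is $N_G[v]$ together with the three apices of the facets lying across an alternating triple of the rim edges, giving at most two triangles $P, P'$.

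Finally I would check that $C$ is a clique. For completeness: the generators are pairwise adjacent; each generator is contained in each level-$3$ element, hence adjacent to it; and any two level-$3$ elements share the core $N_G[v]$ while each apex of one is adjacent to two rim vertices of the other, so each is contained in the other's closed neighbourhood and they are adjacent --- thus $C$ is a complete subgraph. For maximality: $C$ contains the $T_i$, so any vertex outside $C$ fails to be adjacent to some $T_i$ and cannot extend $C$; hence $C$ is a maximal complete subgraph, i.e.\ a clique. I expect the main obstacle to be the bounded-common-neighbour statement used in the level-$1$ analysis: ruling out a vertex adjacent to a non-consecutive (e.g.\ alternating) set of rim vertices is exactly the kind of ``unexpected connection'' that the topology of Section \ref{Sect_Topology} is designed to exclude, and it must be pinned down carefully from Lemma \ref{lem_topology}.
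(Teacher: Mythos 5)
Your overall strategy matches the paper's: show the umbrella facets are pairwise adjacent, observe that any further element of the common neighbourhood has level in $\{1,3,5,7\}$, kill levels $5$ and $7$ because $N_G[v]$ (with at least $7$ vertices) cannot fit into the central $\Delta_2$ resp.\ $\Delta_1$, identify the level-$3$ elements as the at most two $\Delta_3$ centred at $v$ (forcing $\deg(v)=6$), and check all pairwise adjacencies; maximality is automatic because the set is the common neighbourhood of some of its own elements. All of that is sound and is essentially what the paper does.

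The one genuine gap is the step you yourself flag: the level-$1$ exclusion. You reduce it to the claim that two vertices of a \mammal\ have at most two common neighbours, and attribute this to Lemma \ref{lem_topology}. But that lemma only constrains paths whose endpoints are boundary vertices of a common $\Delta_m$-shaped induced subgraph; to exclude a vertex $x \notin N_G[v]$ adjacent to two rim vertices $w_i$ and $w_{i+2}$ (the configuration you must rule out, since such an $x$ can still meet an endpoint of every rim edge), you would need $w_i$ and $w_{i+2}$ to lie on the boundary of a common $\Delta_m$-shaped subgraph, which need not exist when $\deg(v)\geq 7$. The claim is true, but establishing it requires the discrete curvature formula of Appendix \ref{Sec_Proofsfromtopology} applied to the disc bounded by the closed walk $x w_i v w_{i+2} x$, which contains $w_{i+1}$ as an interior vertex, rather than a direct citation of Lemma \ref{lem_topology}. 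The paper sidesteps this entirely in its $k=0$ case: a further facet adjacent to every umbrella facet would have all of its vertices in $N_G(v)$, producing a \thcir\ in the chordless cycle $N_G(v)$ of length at least $6$ --- a contradiction requiring no new topological input. If you either supply that curvature computation or switch to the paper's direct argument for the level-$1$ case, your proof is complete.
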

\begin{proof}
    Clearly, all these $\Delta_1$ are pairwise adjacent in $G_n$, since they share $v$. Thus, they lie in a clique, which itself lies in the common neighbourhood  $\cneig{G_n}{T\cong \Delta_1\mid v\subseteq \Delta_1}$. \\
    We consider all $\Delta_{1+k} \cong T$ which lie in this common neighbourhood (for
    $k \in \{0,2,4,6\}$). 
    \begin{enumerate}
    	\item Case $k=0$: If there was a $\Delta_1$ adjacent to all facets in the
    	umbrella, all of its vertices would lie in $N_G(v)$ (each
    	of its vertices can only lie in two facets and the number of facets
    	is at least 6). Thus, $N_G(v)$ contains a \thcir, in contradiction
    	to being at least a 6-cycle.
    	\item Case $k=2$: Any $\Delta_3$ which is  adjacent to all facets in
    	the umbrella, would be the $\Delta_3$ containing $v$ as its middle vertex. Thus, $v$ has degree $6$. 
    	In this case, there are two 
    	$\Delta_3$ with $v$ as their central vertex and these two
    	are clearly adjacent.
    	\item Case $k=4$: By Remark \ref{Rem_TriangleInclusionBoundaryDistance}, 
    	every $\Delta_1$
    	adjacent to a given $\Delta_5$ has to lie within the central $\Delta_2$. 
    	By Lemma
    	\ref{Lem_TriangleInclusionOne}, there are four of 
    	these $\Delta_1$. 
    	Since $\deg(v) \geq 6$, no $\Delta_5$ can be adjacent to 
    	all the facets in the umbrella.
    	\item Case $k=6$:  By Remark \ref{Rem_TriangleInclusionBoundaryDistance} 
    	and Definition \ref{Def_theCliqueGraph}, a $\Delta_7$ is only 
    	adjacent to one $\Delta_1$.
    	Since $\deg(v) \geq 6$, no $\Delta_7$ can be adjacent to all 
    	the facets in the umbrella.
    \end{enumerate}
   
    \noindent Thus, all the elements in 
    $\cneig{G_n}{T\cong \Delta_1\mid v\subseteq \Delta_1}$ are 
    pairwise adjacent, and we obtain a clique.
\end{proof}

\noindent Those two lemmas suggest a correspondence between the cliques of $G_n$ we constructed and the vertices of $G_{n+1}$. 
 
\begin{rem}\label{lem_map}
    For every \mammal\enspace $G$, and every $n\in \Z_{\geq 0}$ there is a map
    \begin{align*}
        C\colon V(G_{n+1})&\to \{\text{cliques of }G_n\},\\
        S&\mapsto\text{the clique from}
        \begin{cases}
            \text{Lemma \ref{Lem_CliqueConstructionVertex}}, &\text{if $S$ is of level $0$},\\
            \text{Lemma \ref{Lem_CliqueConstructionTriangle}}, &\text{otherwise.} 
	\end{cases}
    \end{align*}
\end{rem}

\noindent In Section \ref{Sect_ChartExtension}, we discuss some theory 
that helps to show the bijectivity of this map in Section \ref{Sect_CliqueCompleteness}.
In Section \ref{Sect_CliqueIntersections}, we prove that it is a 
graph isomorphism between $G_{n+1}$ and $kG_{n}$.

\section{Chart Extensions}\label{Sect_ChartExtension}
In Section \ref{Sect_Graph}, we introduced the graph $G_n$ 
(of a \mammal\enspace $G$) and constructed
several of its cliques in Subsection \ref{Subsect_CliqueConstruction}. We
still need to show that $G_n$ has no more cliques than those.

To do so, we transfer local regions of $G_n$ to local regions of the 
hexagonal grid, where the calculations become simpler. This transfer is
easy if we only consider ``smaller`` triangles within a ``larger`` hexagonal
chart. However, Definition \ref{Def_theCliqueGraph} also includes edges
between triangles of the same size. In this case, we
extend a hexagonal chart to a larger domain, containing
all adjacent triangular-shaped subgraphs as well, if possible.
The existence of such an extension is non-trivial and
needs several intricate arguments about topology and straight paths.
In this technical chapter, we show that such an extension of charts is always
possible for $m \geq 3$ (Lemma \ref{Lem_HexagonalChartExtension}).

For $m\geq 4$ and a $\Delta_m$-shaped subgraph $S$ of 
$\Hex_m$, any neighbouring $\Delta_m$-shaped subgraph 
(with respect to the geometric clique graph $(\Hex_m)_n$) can be constructed
by adding vectors from $\vec{D}_0$ (see Def. \ref{Def_HexagonalGrid}) to $S$. 
For $m=3$, one additional neighbour occurs, which is the rotation of $S$ by 
$\frac{\pi}{3}$.  We want to prove that the same structure holds for 
subgraphs of $G$. We start by noting that
each $\vec{d} \in \vec{D}_0$ can lead to an adjacent $\Delta_m$.
(The complementary claim is proven in Lemma \ref{Lem_NeighbourChart}).

\begin{rem}\label{Rem_TranslationNeighboursExist}
	Let $G$ be a \mammal\enspace and let $\nu\colon H\to F$ be a 
        hexagonal chart, with $\Delta_m\subseteq H\subseteq \Hex_m$ for an $m\geq 3$. 
	If for a $\vec{d}\in \vec{D}_0$, the image of 
	 $\Delta_m^{\vec{d}}$ is
	contained in $H$, the image of $\nu\circ\Delta_m^{\vec{d}}$
	lies inside $\neig{G}{\nu(\Delta_m)}.$
\end{rem}

\noindent We employ \emph{facet-paths} to extend charts. For a
locally cyclic graph with boundary $G = (V,E)$, these are finite
sequences of facets
    $f_1f_2\dots f_k$ such that $f_i \cap f_{i+1} \in E$
    for all $1 \leq i < k$.

Given a monomorphism $\mu: H \to G$ and a facet-path
$f_1 f_2 \dots f_k$ in $H$, we have the following transfer:
For each pair $f_if_{i+1}$ with
$1 \leq i < k$, the image $\mu(f_i \cap f_{i+1})$ is an 
inner edge and $\mu(f_i) \neq \mu(f_{i+1})$.
In particular, the images of the vertices in
$f_i$ uniquely determine the image of the
vertex $f_{i+1}\ohne(f_i \cap f_{i+1})$.
Inductively, the images of the vertices in $f_1$
determine those in $f_k$.

This allows a unique extension along a facet-path.
Unfortunately, the extensions from different facet-paths
are not compatible in general.

\noindent Given a $\Delta_m$-shaped subgraph $S\subseteq G$ with neighbouring $\Delta_m$-shaped subgraphs $T_k$, we want
to show the existence of a chart containing all of them. We start by
extending the chart from $S$ to one neighbouring
triangular-shaped subgraph. 

\begin{lem}\label{Lem_NeighbourChart}
    Let $G$ be a \mammal\enspace and $\Delta_m \iso{\mu} S \subset G$ be a standard
    chart with $m \geq 3$. Let $\Delta_m \cong T \subset \neig{G}{S}$. Then, there is a 
    $H \subset \Hex_m$ and a hexagonal chart $\hat{\mu}: H \to T$
    such that $\mu^{-1}(x) = \hat{\mu}^{-1}(x)$ holds for every vertex
    $x$ in $S \cap T$.
    Furthermore, $H$ falls in one of these two cases:
    \begin{itemize}
        \item $H = \vec{d} + \Delta_m$ for some $\vec{d} \in \vec{D}_0$.
        \item $H = \mathemph{\nabla_3} \isdef \{(a,b,c)\in\Hex_3 \mid a \leq 2, b \leq 2, c\leq 2\}$,
            with corner vertices $(2,2,-1)$, $(2,-1,2)$, and $(-1,2,2)$.
    \end{itemize}

    \noindent Consequently, for $m \geq 4$ there are at most six of these
     $\Delta_m\cong T \subset \neig{G}{S}$,
    and for $m=3$ there are at most seven.
\end{lem}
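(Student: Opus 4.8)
The plan is to reduce everything to the hexagonal grid. First I would extend the chart $\mu$ to a hexagonal chart $\bar\mu\colon\neig{\Hex_m}{\Delta_m}\to\neig{G}{S}$ defined on the entire closed neighbourhood, where $\neig{\Hex_m}{\Delta_m}=\{(a,b,c)\in\Hex_m\mid a,b,c\geq-1\text{ and at most one coordinate is }-1\}$. The extension uses the facet-path transfer introduced before the lemma: every facet of the grid neighbourhood is reached from a facet of $\Delta_m$ by a facet-path, and its image is forced to be the unique facet of $G$ sharing the transferred inner edge (which exists and is unique because $G$ is locally cyclic). Since $G$ is a \mammal, the complex $\K{G}$ is simply connected, so any two facet-paths with the same endpoints are equivalent; thus the transfer is independent of the chosen path, making $\bar\mu$ well defined and equal to $\mu$ on $S$.

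The heart of the argument is that $\bar\mu$ is in fact an isomorphism onto $\neig{G}{S}$, i.e. the neighbourhood of $S$ does not fold back onto itself. By Lemma~\ref{Lem_NeighbourhoodCycle}, $\neig{G}{S}\ohne S$ is a single cycle whose vertices meet at most three facets, which matches the grid ring exactly; moreover a full loop around the grid ring is a closed facet-path, which by simple connectivity of $\K{G}$ transfers to a closed path in $G$, so $\bar\mu$ closes up and the two rings have equal length. That no ring vertex is identified with a vertex of $S$ or with another ring vertex, and that no spurious edges appear, is exactly what Lemma~\ref{lem_topology} rules out: parts~\ref{top_a}--\ref{top_c} forbid chords between boundary vertices of $S$ and short outside paths reconnecting $\partial S$, which are precisely the configurations a fold would create. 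The hypothesis $m\geq3$ guarantees that $S$ has enough interior for these lemmas to apply. I expect this injectivity-and-matching step to be the main obstacle, absorbing all of the intricate topology.

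With $\bar\mu$ available, I would set $H\isdef\bar\mu^{-1}(T)$ and $\hat\mu\isdef\bar\mu|_{H}$. Because $T\subseteq\neig{G}{S}$ and $\bar\mu$ is an isomorphism, $H$ is an induced subgraph of $\neig{\Hex_m}{\Delta_m}\subseteq\Hex_m$ isomorphic to $\Delta_m$, and $\hat\mu$ is automatically a hexagonal chart agreeing with $\mu$ on $S\cap T$. It then remains to classify the induced $\Delta_m$-copies inside this region. Every copy of $\Delta_m$ in $\Hex_m$ is a side-$m$ triangle in one of the two orientations, with boundary consisting of three maximal straight paths of length $m$ (Remark~\ref{Rem_StraightPathsInTriangle}). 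Confined to the region above, a triangle of the same orientation as $\Delta_m$ can only be $\Delta_m$ itself (the excluded case $\vec{d}=0$) or one of the six translates $\vec{d}+\Delta_m$ with $\vec{d}\in\vec{D}_0$, since a shift by two or more units reaches a coordinate $\leq-2$; a triangle of the opposite orientation has a corner with a coordinate $2-m$, which lies in the region only for $m=3$ and then gives exactly $\nabla_3$. This is the claimed dichotomy for $H$.

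Finally, the assignment $T\mapsto H$ is injective: if two neighbours $T_1,T_2$ yielded the same domain $H$, then $\hat\mu_1$ and $\hat\mu_2$ both restrict to $\mu$ on $S$ and are determined on the remaining (ring) vertices of $H$ by the same facet-path transfer, hence coincide, so $T_1=\hat\mu_1(H)=\hat\mu_2(H)=T_2$. Thus the number of admissible $T$ is bounded by the number of admissible domains $H$: the six translates when $m\geq4$, and these six together with $\nabla_3$ when $m=3$, giving at most six and at most seven respectively.
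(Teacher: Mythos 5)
Your plan hinges on one central claim: that $\mu$ extends to an isomorphism $\bar\mu\colon\neig{\Hex_m}{\Delta_m}\to\neig{G}{S}$ defined on the \emph{entire} closed grid neighbourhood. This claim is false in general, and it cannot be repaired by simple connectivity. The boundary vertices of $S$ are only guaranteed to have degree \emph{at least} $6$ in $G$; a corner such as $\mu(m,0,0)$ may have degree $7$ or more. In that case $\neig{G}{S}\ohne S$ is a cycle strictly longer than the outer ring of $\neig{\Hex_m}{\Delta_m}$, so no isomorphism between the two neighbourhoods exists. The facet-path transfer breaks at exactly this point: the closed facet-path consisting of the six facets around $(m,0,0)$ in the grid transfers to six consecutive facets in the umbrella of $\mu(m,0,0)$, which do not close up when that umbrella has seven or more facets, so the clockwise and counterclockwise transfers assign different images to the ring vertices near that corner and $\bar\mu$ is not even well defined there. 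Simple connectivity of $\K{G}$ concerns closed paths \emph{in $G$} being null-homotopic; it says nothing about whether the transferred image of a closed facet-path from the domain closes up --- that is a local condition on vertex degrees, which is precisely what fails. (Lemma \ref{Lem_NeighbourhoodCycle} tells you the ring vertices of $\neig{G}{S}$ meet at most three facets of $\neig{G}{S}$, but it does not bound the length of the ring.)

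This is why the paper's proof takes a different and necessarily more conditional route: it classifies the straight paths of length $m$ inside $\neig{G}{S}$, shows each must arise by extending one of the maximal straight paths $\alpha,\beta,\gamma$ of $S$ by at most one edge at each end, and builds a chart \emph{only on the domain $H$ of each individual neighbour $T$ that actually occurs} --- noting explicitly that such a chart exists only if certain boundary vertices of $S$ have degree exactly $6$ (``all $\mu(k,0,m-k)$ with $2\le k\le m-1$ have degree $6$ if this chart exists''). Your classification of the possible domains $H$ inside the grid neighbourhood and the resulting counts (six for $m\ge 4$, seven for $m=3$) match the paper's conclusion, but they are reached from a premise that erases the conditional existence of the neighbours, which is the actual content of the lemma. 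To fix the argument you would have to abandon the global extension and, as the paper does, construct $\hat\mu$ for each $T$ separately from a boundary path of $T$ that meets $S$.
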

\begin{proof}
	By Remark \ref{Rem_StraightPathsInTriangle}, the boundary of 
	$\Delta_m$ consists of three straight paths of length $m$.
	Thus, to find $\Delta_m \cong T \subset \neig{G}{S}$,
	we start by describing all straight paths with $m$ edges within $\neig{G}{S}$. 
	Each of those paths either completely lies in  
	$\neig{G}{S}\ohne S$ or it intersects $S$ in at least one vertex.
	
	If all boundary paths of $T$ lay in $\neig{G}{S}\ohne S$, 
	Lemma \ref{Lem_NeighbourhoodCycle}
	would imply $\partial T = \neig{G}{S} \ohne S$ since both are cyclic graphs. 
	However, this would
	imply $S \subsetneq T$, contradicting $S \cong T$. 
	Thus, at least one boundary path of $T$ 
	intersects $S$. We can construct each of those
	paths by extending a straight path from $S$ into
	$\neig{G}{S}\ohne S$. The first extended edge cannot be a boundary
	edge of $\neig{G}{S}$.
	
	By Lemma \ref{Lem_NeighbourhoodCycle}, 
	vertices in $\neig{G}{S}\ohne S$ are incident to at most three facets.
	Thus, any straight path in
	$S$ can only be extended by one edge into $\neig{G}{S}$ 
	on each side (the path degree
	of any extension is at most 2), which can be seen in Figure \ref{Extensions_of_a_straight_path}.
	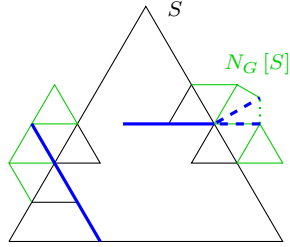
\begin{figure}[htbp]
		\centering
		\begin{tikzpicture} [scale=0.3]
		\HexagonalCoordinates{8}{8}
		\draw
		(A11) -- (A71) -- (A26) -- (A17) node[draw=none,fill=none,font=\scriptsize,midway,above right] {$S$} -- (A11)
		(A12) -- (A22) -- (A13) -- (A23) -- (A14);
		\draw[green!80!black]
		(A12) -- (A03) -- (A13) -- (A04) -- (A14) -- (A05) -- (A04) -- (A03);      
		\draw[very thick, blue] (A31) -- (A04);
		
		\draw (A53) -- (A43) -- (A44) -- (A34) -- (A35);
		\draw[green!80!black]
		(A54) -- (A63) -- (A53) -- (A54) -- (A44) -- (A45) -- (A35) node[draw=none,fill=none,font=\scriptsize,midway,above right] {$\neig{G}{S}$}	
		(A45) -- (D44);
		\draw[green!80!black,thick, dotted]
		(A54) -- (D44);
		
		\draw[very thick, blue] (A24) -- (A44);
		\draw[very thick,blue,dashed] (A54) -- (A44) -- (D44);			
		\end{tikzpicture}
		\caption{Possible extension of a straight path into the neighbourhood of $\Delta_{m}$.}
		\label{Extensions_of_a_straight_path}
	\end{figure}

	Consequently, we start with the straight paths in $S$ with at least $m-2$ 
	edges, extend them to straight paths of length $m$ and 
	add the possible two other sides of the $\Delta_m$-shaped subgraph.
	Up to symmetry, the preimages of the paths of length $m-2$ with respect to $\mu$ 
	are described in Remark \ref{Rem_StraightPathsInTriangle}, whose 
	notation ($\alpha$,
	$\beta$, and $\gamma$) we employ. 
	The images of $\alpha,\beta,\gamma \subseteq \Delta_m$ under 
	$\mu$ are called $\alpha^\mu, \beta^\mu,\gamma^\mu \subseteq S$.
	
	A straight path with $m$ edges can only be the boundary
	of at most two $T \cong \Delta_m$. However, if we start at a vertex
	$\mu(m-t-k,t,k) \in S$ and follow a straight path ``down'' 
	(i.\,e. in the direction
	of smaller third coordinates), it can only go $k$ steps while staying
	in $S$. Conversely, if we follow a straight path in the direction 
	of larger
	third coordinates (``up''), we can go at most $m-k$ steps within $S$.
	We conclude that from any vertex of 
	\begin{align*}
	\left\{\begin{array}{c}
	\alpha^\mu\\
	\beta^\mu\\
	\gamma^\mu
	\end{array}\right\} \text{ we can go}  \left\{\begin{array}{c}
	m+1\\
	m\\
	m-1
	\end{array}\right\}\text{ steps `up'}
	\text{ and }
	\left\{\begin{array}{c}
	1\\
	2\\
	3
	\end{array}\right\}
            \text{ steps `down'}
	\end{align*}
        staying in $\neig{G}{S}$.

	Since $m\geq 3$,  beginning at a vertex 
	of $\alpha^\mu$ or $\beta^\mu$ 
	only in the `up'-direction we find a straight path of 
	length $m$. For $m=3$, beginning at a vertex 
	of $\gamma^\mu$ only in the `down'-direction we have 
	the space for a straight path of length $m$ and for $m\geq 4$, no such straight path  beginning at a vertex 
	of  $\gamma^\mu$
	exists, neither `upwards' nor `downwards'.
	
	Now, we discuss which parts of $\alpha^\mu,\ \beta^\mu$, and $\gamma^\mu$ 
	can be the straight boundary paths of a 
	$\Delta_m \cong T \subset \neig{G}{S}$. Since $\alpha^\mu$ lies in 
	$\partial S$ with $S \cong \Delta_m$, it cannot
	be part of the boundary of $T$. Thus, two possible
	extensions of $\alpha$ and two possible extensions of $\beta^\mu$
        remain, as can 
	be seen in Figure \ref{Fig_paths}.
	
	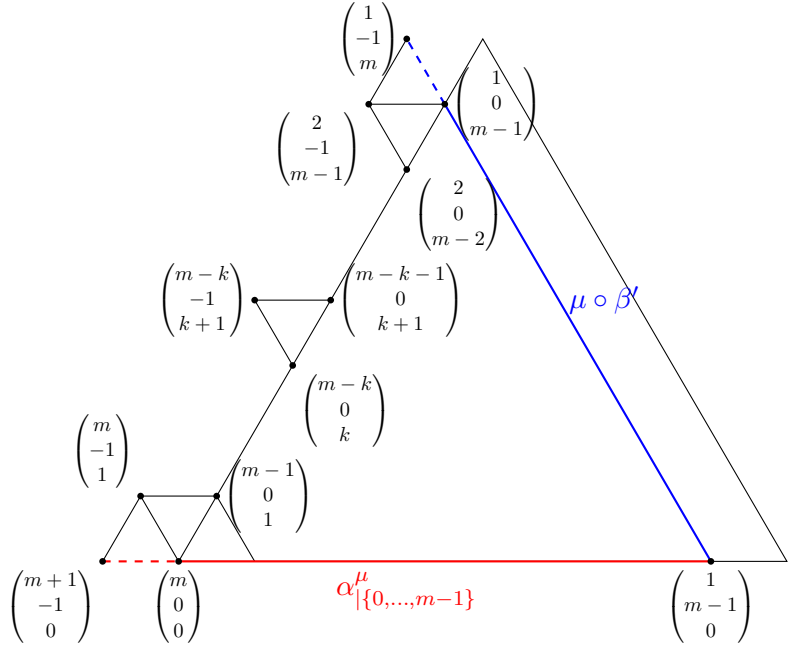
\begin{figure}[htbp]
		\begin{center}
			\begin{tikzpicture}[scale=0.5]
			\HexagonalCoordinates{9}{8}
			
			\draw (A10) -- (A90) -- (A18) -- cycle;
			\draw (A00) -- (A01) -- (A11) -- (A20);
			\draw (A01) -- (A10);
			\draw (A13) -- (A04) -- (A14);
			\draw (A08) -- (A07) -- (A16);
			\draw (A07) -- (A17);
			
			\draw[thick, red] (A10) -- (A80);
			\draw[thick, red, dashed] (A00) -- (A10);
			\node[red,below] at (A40) {$\alpha^\mu_{|\{0,\dots,m-1\}}$};
			\draw[thick, blue] (A80) -- (A17);
			\draw[thick, blue, dashed] (A17) -- (A08);
			\node[blue, right] at (A44) {$\mu\circ\beta'$};
			
			\foreach \p/\r/\x/\y/\z in {00/below left/m+1/-1/0, 10/below/m/0/0, 80/below/1/m-1/0,
				01/above left/m/-1/1, 11/right/m-1/0/1, 13/below right/m-k/0/k, 04/left/m-k/-1/k+1,
				14/right/m-k-1/0/k+1, 16/below right/2/0/m-2, 17/right/1/0/m-1, 07/below left/2/-1/m-1,
				08/left/1/-1/m}{
				\fill[black] (A\p) circle (2.5pt);
				\node[\r,scale=0.7] at (A\p) {$\begin{pmatrix}\x\\\y\\\z\end{pmatrix}$}; 
			}
			\end{tikzpicture}
		\end{center}
		\caption{Straight paths in a triangle}\label{Fig_paths}
	\end{figure}
	\noindent We extend $\alpha^\mu_{|\{0,\dots,m-1\}}$ at 0. This path ends at 
	$(1,m-1,0)$, where a rotation of $\beta^\mu$ starts:
	\begin{equation*}
	\beta': \{0,\dots,m-1\} \to \Z^3,\qquad t \mapsto (0,m-t,t).
	\end{equation*}
	If these paths lie in the boundary of $T$, the group action 
        from Subsection \ref{Subsect_Hexagonal}
        allows us to find a
	hexagonal chart $\nu: \Delta_m \to T$ with
	\begin{align*}
	\nu(0,m,0) = \mu(1,m-1,0) \qquad\text{and}\qquad \nu(1,m-1,0) = \mu(2,m-2,0).
	\end{align*}
	A translation allows us to rewrite the hexagonal chart as
	\begin{align*}
	\mu_{1,-1,0}: \Delta_m + (1,-1,0) \to T,\qquad  x \mapsto \nu(x+(-1,1,0)).
	\end{align*}
	In particular, all $\mu(k,0,m-k)$ with $2 \leq k \leq m-1$ have degree
	6 if this chart exists.
	
	Since the group action of Subsection \ref{Subsect_Hexagonal}
	acts transitively on the extensions of $\alpha$ and $\beta$,
	respectively, there are exactly six  triangular-shaped graphs $T$ that could be constructed
	in such a manner. These correspond to the elements of $\vec{D}_0$.
	
	To complete the construction of $\Delta_m \cong T \subset \neig{G}{S}$, we
	consider the path $\gamma$ for $m=3$. It has to be extended in both directions.
	Similarly to our construction of $\mu_{1,-1,0}$, we extend the chart $\mu$
	to incorporate the vertices $(2,2,-1)$, $(2,-1,2)$, and $(-1,2,2)$.
\end{proof}

\noindent Next, we combine these different hexagonal charts. We show that they
define compatible maps.
\begin{lem}\label{Lem_ChartExtensionCompatible}
    Let $G$ be a \mammal\enspace and 
    $\mu: \Delta_m \to S \subset G$ be a standard chart with $m \geq 3$.
    Let $\mu_1: H_1 \to T_1$ and $\mu_2: H_2 \to T_2$ be two hexagonal charts
    from Lemma \ref{Lem_NeighbourChart}. For any $x \in H_1 \cap H_2$, we have
    $\mu_1(x) = \mu_2(x)$.
\end{lem}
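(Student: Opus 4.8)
The plan is to exploit that a hexagonal chart propagates \emph{deterministically} along facet-paths, as recorded in the transfer principle stated just before Lemma~\ref{Lem_NeighbourChart}: once the image of one facet is fixed, the image of every facet reachable from it by a facet-path is uniquely determined by the structure of $G$. Since $\mu_1$ and $\mu_2$ are monomorphisms defined on all of $H_1$, respectively $H_2$, their restrictions to any facet-path contained in $H_1\cap H_2$ are governed by this one and the same rule. It therefore suffices to exhibit a single facet $f\subseteq H_1\cap H_2$ on which $\mu_1$ and $\mu_2$ already agree and, for every vertex $x\in H_1\cap H_2$, a facet-path inside $H_1\cap H_2$ from $f$ to a facet $g$ with $x\in g$; propagating the common value of the chart on $f$ along this path then forces $\mu_1(x)=\mu_2(x)$.

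First I would produce the common facet. By the coordinate-matching condition in Lemma~\ref{Lem_NeighbourChart}, each $\mu_i$ coincides with $\mu$ on the overlap $\Delta_m\cap H_i$; hence $\mu_1$ and $\mu_2$ both equal $\mu$, and in particular agree with each other, on $\Delta_m\cap H_1\cap H_2$. It remains to see that this region carries a facet. For translate domains $H_1=\vec{d}_1+\Delta_m$ and $H_2=\vec{d}_2+\Delta_m$, the triple intersection is cut out by $x_j\geq\max\{0,(\vec{d}_1)_j,(\vec{d}_2)_j\}$ and is again a triangular-shaped region; a short case distinction over $\vec{d}_1,\vec{d}_2\in\vec{D}_0$ shows that the three lower bounds sum to at most $2$, so the side length is at least $m-2\geq1$ and a \thcir\ is present. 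The exceptional domain $\nabla_3$, which occurs only for $m=3$, is handled by the same kind of direct inspection: $\nabla_3\cap\Delta_3$ and its further intersection with any $\vec{d}+\Delta_3$ still contain a facet.

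Next I would invoke connectivity of the overlap. Being an intersection of triangular-shaped regions (and possibly $\nabla_3$) inside the hexagonal grid, $H_1\cap H_2$ is a connected, two-dimensional region of side length at least $1$; thus every vertex lies in one of its facets and any two of its facets are linked by a facet-path that stays inside $H_1\cap H_2$. Fixing the facet $f$ from the previous step and, for arbitrary $x\in H_1\cap H_2$, a facet $g\ni x$ of the overlap, I would pick a facet-path $f=f_0f_1\dots f_k=g$ lying entirely in $H_1\cap H_2$. Each shared edge $f_i\cap f_{i+1}$ is mapped by both charts to an inner edge of $G$, and starting from their common value on $f$ the transfer rule determines their values identically on $f_1,\dots,f_k$; an induction on $i$ yields $\mu_1=\mu_2$ on $g$, hence $\mu_1(x)=\mu_2(x)$.

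I expect the main obstacle to be the combinatorial bookkeeping of the second step rather than any deep topology: one must verify, uniformly over all pairs $\vec{d}_1,\vec{d}_2\in\vec{D}_0$ together with the exceptional $\nabla_3$, that $\Delta_m\cap H_1\cap H_2$ is non-degenerate enough to contain a facet and that $H_1\cap H_2$ does not collapse to a lower-dimensional sliver. This is precisely where the hypothesis $m\geq3$ enters. It is worth noting that no global simple-connectivity argument is needed here, since the entire propagation is carried out inside the common overlap $H_1\cap H_2$, where both charts are simultaneously defined; the simple connectivity of $G$ will only be required afterwards, when the compatible local charts are glued into a single chart on all of $H_1\cup H_2$.
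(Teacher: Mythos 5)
Your proof is correct and rests on the same mechanism as the paper's: both charts agree with $\mu$ on $\Delta_m$ by Lemma \ref{Lem_NeighbourChart}, and the deterministic facet-path transfer then forces agreement on the rest of the overlap. The only difference is organisational: the paper checks vertex by vertex (every non-corner vertex of $H_i\setminus\Delta_m$ is reached by a two-facet path starting in $\Delta_m$, and the corners of $\vec{d}+\Delta_m$ are treated by an explicit three-facet path), whereas you propagate from a common facet of $\Delta_m\cap H_1\cap H_2$ through the facet-connected region $H_1\cap H_2$ --- a slightly more uniform bookkeeping that requires, and correctly uses, the same hypothesis $m\geq 3$.
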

\begin{proof}
	Let $x \in H_1 \cap H_2$. If $x \in \Delta_m$, the claim follows directly
	from Lemma \ref{Lem_NeighbourChart}. Otherwise, we have to consider the
        extension construction along facet-paths.
	
	If there is a facet path $f_1f_2$ in $H_1\cap H_2$ with 
	$f_1 \subset \Delta_m$ and
	$x \in f_2$, both $\mu_1$ and $\mu_2$ have to map $x$ to the same value. 
	Thus, only the
	corner vertices of $\vec{d} + \Delta_m$ might be problematic.
	
	Without loss of generality (Subsection \ref{Subsect_Hexagonal}),
	let $H_1 = (1,-1,0) + \Delta_m$. The corner vertex $(m+1,-1,0)$ does not lie
	in any $H_2$, so it can be ignored. The corner $x = (1,-1,m)$ also lies
	in $H_2 = (0,-1,1) + \Delta_m$. We can define $x$ by a facet path in $H_1$ 
	with three facets. Since $m\geq 3$, this facet path also lies in $H_2$. 
	Thus, the charts
	$\mu_1$ and $\mu_2$ cannot conflict, as can be seen in Figure 
	\ref{Fig_ExtendingtheStandardChart}.
	\begin{figure}[htbp]
		\begin{center}
			\begin{tikzpicture}[scale=0.3]
			\HexagonalCoordinates{9}{8}
			\draw[green!80!black] (A00) -- (A60) -- (A06) -- cycle;
			\draw (A10) -- (A70) -- (A16) -- cycle;
			\draw[purple] (A01) -- (A61) -- (A07) -- cycle;
			
			\draw[blue] (A12) -- (A03) -- (A13) -- (A22) -- cycle;
			\draw[blue] (A12) -- (A13);
			
			\draw[blue] (A06) -- (A15) -- (A05) -- cycle;
			\draw[blue] (A14) -- (A15) -- (A05) -- cycle;
			\draw[blue] (A14) -- (A15) -- (A24) -- cycle;
			
			\foreach \p in {3,6,0}{
				\fill[blue] (A0\p) circle (5pt);
			}
			\end{tikzpicture}
			\caption{The edge-face-paths that can be used for extending the standard chart to a neighbouring $\Delta_m$}\label{Fig_ExtendingtheStandardChart}
			
		\end{center}
	\end{figure}
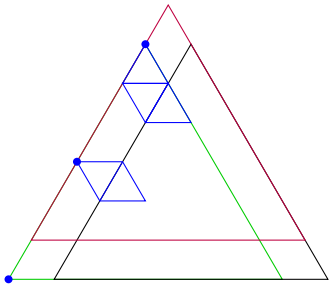
\end{proof}

\noindent Finally, we put all of the pieces together.

\begin{lem}\label{Lem_HexagonalChartExtension}
    Let $G$ be a \mammal\enspace and 
    $\mu: \Delta_m \to S \subset G$ be a standard chart
    with $m \geq 3$.
    There is a hexagonal chart $\hat{\mu}: E \to \hat{S}$, with $\Delta_m \subset E$
    and $S \subset \hat{S} \subset G$, such that $\hat{\mu}_{|\Delta_m} = \mu$
    and
    such that any $T \cong \Delta_m$ with $T \subset \neig{G}{S}$
    is either
    \begin{itemize}
        \item the image of $\hat{\mu} \circ \Delta_m^{\vec{t}}$ for some $\vec{t} \in \vec{D}_0$
            or
        \item the image of $\hat{\mu}(\nabla_3)$ from Lemma \ref{Lem_NeighbourChart} if $m=3$.
    \end{itemize}
\end{lem}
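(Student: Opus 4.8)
The plan is to assemble the two previous lemmas into a single chart. Lemma~\ref{Lem_NeighbourChart} already supplies, for each neighbouring $T \cong \Delta_m \subset \neig{G}{S}$, a hexagonal chart $\mu_T \colon H_T \to T$ whose domain $H_T$ is either $\vec{d} + \Delta_m$ for some $\vec{d} \in \vec{D}_0$ or (only when $m=3$) the set $\nabla_3$, and which agrees with $\mu$ on $S \cap T$. Lemma~\ref{Lem_ChartExtensionCompatible} guarantees that any two of these charts agree on the overlap of their domains. So the natural candidate for $\hat\mu$ is the common extension obtained by gluing $\mu$ together with all the $\mu_T$. I would first set $E \isdef \Delta_m \cup \bigcup_T H_T$, the union of all these domains inside $\Hex_m$, and $\hat{S} \isdef S \cup \neig{G}{S}$, or more precisely the union of $S$ with all the image triangles $T$.

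The key step is to verify that the assignment $\hat\mu(x) \isdef \mu_T(x)$ for $x \in H_T$ (and $\hat\mu(x) \isdef \mu(x)$ for $x \in \Delta_m$) is well defined as a map on $E$. This is exactly where Lemma~\ref{Lem_ChartExtensionCompatible} is used: whenever $x$ lies in two domains $H_{T_1} \cap H_{T_2}$, the two lemma-charts agree, and since each $H_T$ contains $\Delta_m$ as a sub-region on which $\mu_T$ restricts to $\mu$, consistency with $\mu$ itself is automatic. Thus $\hat\mu$ is a single well-defined graph homomorphism from $E$ into $G$ with $\hat\mu_{|\Delta_m} = \mu$. I would then check that $\hat\mu$ is injective: distinct vertices of $E$ carry distinct barycentric coordinates in $\Hex_m$, and because $\mu$ and each $\mu_T$ are injective while the overlaps are reconciled by Lemma~\ref{Lem_ChartExtensionCompatible}, a coincidence $\hat\mu(x)=\hat\mu(y)$ would force $x$ and $y$ to lie in a common domain, contradicting injectivity there. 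This makes $\hat\mu \colon E \to \hat S$ a hexagonal chart in the sense of Subsection~\ref{Subsect_Hexagonal}.

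Finally I would record the enumeration of neighbours. Lemma~\ref{Lem_NeighbourChart} says every $T \cong \Delta_m$ with $T \subset \neig{G}{S}$ arises as one of the $\mu_T$, hence its domain $H_T$ is $\vec{t} + \Delta_m$ for some $\vec{t} \in \vec{D}_0$ (giving $T$ as the image of $\hat\mu \circ \Delta_m^{\vec{t}}$) or, when $m=3$, the exceptional $\nabla_3$ (giving $T = \hat\mu(\nabla_3)$). Because $H_T \subset E$ by construction and $\hat\mu$ extends $\mu_T$, each such $T$ is indeed the claimed image under $\hat\mu$, completing the statement.

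The main obstacle I anticipate is not the gluing formalism but confirming that $\hat\mu$ remains \emph{injective} globally once all six (or seven) extensions are overlaid simultaneously; compatibility on pairwise overlaps (Lemma~\ref{Lem_ChartExtensionCompatible}) handles agreement, but one must still rule out two vertices from \emph{non-overlapping} domains being sent to the same vertex of $G$. Here the simple connectivity of $G$ (it is a \mammal) and the facet-path uniqueness of the extension are what prevent such ``folding'': a collision would produce a closed facet-path in $G$ whose lift to $\Hex_m$ is not closed, contradicting triangular simple connectivity. I expect this injectivity argument, rather than the bookkeeping of domains, to require the most care.
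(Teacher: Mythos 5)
Your overall architecture matches the paper's: glue $\mu$ with the charts $\mu_T$ from Lemma \ref{Lem_NeighbourChart}, use Lemma \ref{Lem_ChartExtensionCompatible} for well-definedness on overlaps, then prove injectivity. You also correctly identify that global injectivity is the real content. However, your treatment of that step has a genuine gap. In your second paragraph you assert that a coincidence $\hat\mu(x)=\hat\mu(y)$ ``would force $x$ and $y$ to lie in a common domain, contradicting injectivity there'' --- but nothing forces this; two vertices from non-overlapping translates $\vec d_1+\Delta_m$ and $\vec d_2+\Delta_m$ could a priori be identified in $G$, and ruling that out is precisely the point. Your fallback in the last paragraph (a collision yields a closed facet-path in $G$ whose ``lift'' to $\Hex_m$ is not closed, contradicting triangular simple connectivity) does not work as stated: $\hat\mu\colon E\to G$ is a chart on a subgraph, not a covering map, so paths in $G$ do not lift to $\Hex_m$, and homotopy classes do not transfer. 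You would first need injectivity (or a local-isomorphism property) to make such a lifting argument, which is circular.

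The paper closes this gap by a concrete case analysis using the topological consequences of $\delta=6$ already established in Lemma \ref{lem_topology} (which is where simple connectivity actually enters, via the curvature argument in the appendix). Specifically: (i) if $x\in E\setminus\Delta_m$ and $y\in\Delta_m$ had the same image, then a vertex of $\neig{G}{S}\setminus S$ would lie in $S$, contradicting Lemma \ref{lem_topology}(\ref{top_a}); (ii) if $x,y\in E\setminus\Delta_m$ are adjacent to \emph{distinct} boundary vertices $a,b$ of $\Delta_m$, then $\hat\mu(a)\hat\mu(x)\hat\mu(b)$ is a path of length $2$ leaving and re-entering $\partial S$, so Lemma \ref{lem_topology}(\ref{top_b}) forces $\{\hat\mu(a),\hat\mu(b),\hat\mu(x)\}$ to be a facet, which by the well-definedness construction pins down $x=y$; (iii) for the remaining case of two translated triangle tips such as $(0,-1,m+1)$ and $(-1,0,m+1)$, a collision would give $\deg(\hat\mu(0,0,m))=5$, contradicting $\delta=6$ --- note this case is a degree argument, not a homotopy argument, and your sketch has no mechanism to catch it. You should replace your injectivity paragraphs with this case analysis (or supply an actual proof of the lifting claim, which would amount to re-deriving Lemma \ref{lem_topology}).
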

\begin{proof}
	For any $\Delta_m \cong T \subset \neig{G}{S}$, we construct the hexagonal chart
	$\mu_T : H_T \to T$ from Lemma \ref{Lem_NeighbourChart}. We define $E$ as
	the union of $\Delta_m$ with all those $H_T$.
	
	For $m > 3$, this graph is
	\begin{equation*}
	E \isdef \Delta_m + \bigcup_{\vec{d}\in \vec{D}} (\vec{d}+\Delta_m),
	\end{equation*}
	in which $\vec{D}$ is the set of all applicable translation vectors.
	For $m = 3$, the graph also contains $\nabla_3$. 
	By Lemma \ref{Lem_NeighbourChart} and Lemma \ref{Lem_ChartExtensionCompatible}, we can define $\hat{\mu}$ as follows:
	\begin{equation*}
	\hat{\mu}: E \to G \qquad x \mapsto 
	\begin{cases}
	\mu(x), & x \in \Delta_m, \\
	\mu_T(x), & x \in H_T \text{ for } \Delta_m \cong T \subset \neig{G}{S}.
	\end{cases}
	\end{equation*}
	It remains to show that $\hat{\mu}$ is injective.
	Since $\mu$ is
	injective on $S$, we only need to consider pairs of vertices, in which at least one is from $\neig{G}{S}\ohne S$.
	
	\begin{enumerate}
		\item Let $x \in E\ohne\Delta_m$ and $y \in \Delta_m$
		such that $\hat{\mu}(x) = \hat{\mu}(y) \in S$. 
		Then, there is a $\vec{d} \in \vec{D}_0$ such that
		$x \in \vec{d} + \Delta_m$. By construction of
		$\mu_T$, the point $x$ is mapped to a point in
		$\neig{G}{S}$, which is different from $S$ by
		Lemma \ref{lem_topology}(\ref{top_a}), in contradiction
		to our assumption.
		\item Let $x,y \in E\ohne\Delta_m$ such that
		$\hat{\mu}(x) = \hat{\mu}(y)$.
		Except the translates of the triangle tips, every
		vertex in $E\ohne\Delta_m$ in adjacent
		to two different boundary vertices of $\Delta_m$. Thus,
		we first consider the case where $x$ and $y$ are adjacent
		to different boundary vertices $a$ and $b$, respectively. Thus, we have
		a path of length 2 from $\hat{\mu}(a) \in \partial S$ over
		$\hat{\mu}(x) \in \neig{G}{S}\ohne S$ to 
		$\hat{\mu}(b) \in \partial S$. Then, Lemma \ref{lem_topology}\eqref{top_b}
		implies that $\{\hat{\mu}(a),\hat{\mu}(b),\hat{\mu}(x)\}$ is 
		a facet. By our proof of well-definedness, this is only possible
		if $x=y$.
		
		It remains to show the claim if $x$ and $y$ both are adjacent
		to the same triangle tip, say $x = (0,-1,m+1)$ and 
		$y = (-1,0,m+1)$. In this case, $\hat{\mu}(x) = \hat{\mu}(y)$
		would imply $\deg(\hat{\mu}(0,0,m)) = 5$, in contradiction
		to $\delta=6$.
		\qedhere
	\end{enumerate}
\end{proof}

    The condition $m\geq 3$ in Lemma \ref{Lem_HexagonalChartExtension} is 
    necessary. Extending a chart $\mu\colon \Delta_2\to S$ simultaneously 
    to all subgraphs $T\cong\Delta_{2}$ of $G$ is only possible if the 
    vertices $(1,1,0),(1,0,1)$, and $(0,1,1)$ are mapped to vertices of degree $6$. 

\section{Full Clique Description}\label{Sect_CliqueCompleteness}
In this section, we show that the cliques from Subsection
\ref{Subsect_CliqueConstruction} are all cliques of the geometric
clique graph $G_n$.
This section culminates in a full description of all cliques of $G_n$ (Corollary
\ref{Cor_CliqueSummary})
and the correspondence to the vertices of $G_{n+1}$ (Theorem \ref{Theo_bijective}).

For $m \geq 3$, we employ the chart extensions from Section
\ref{Sect_ChartExtension}. The smaller cases have to be argued
differently.

\subsection{Exceptional (Small) Cases}
In this subsection, we discuss the cliques which only contain elements of 
levels smaller than 3.
\begin{lem}\label{Lem_EvenCliquesOfSmallLevel}
    Let $C$ be a clique of $G_n$,
     in which every vertex is of level $0$ or $2$.
     Then, $C$ is one of the
    cliques described in Lemma \ref{Lem_CliqueConstructionTriangle}.
\end{lem}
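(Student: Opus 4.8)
The plan is to reconstruct from $C$ a single facet $S_0 \cong \Delta_1$ of $G$ and to show that $C$ is exactly the common neighbourhood of its three vertices, which is the clique produced by Lemma \ref{Lem_CliqueConstructionTriangle} in the case $m=0$ (with the standard chart $\mu\colon\Delta_1\to S_0$). First I would record the relevant adjacencies from Definition \ref{Def_theCliqueGraph}. Since levels $0$ and $2$ occur, $n$ is even; a level-$0$ vertex is a single vertex of $G$ and a level-$2$ vertex is a subgraph $S\cong\Delta_2$. Two level-$0$ vertices are adjacent in $G_n$ iff they are adjacent in $G$; a level-$0$ vertex $v$ and a level-$2$ subgraph $S$ are adjacent iff $v\in S$; and two level-$2$ subgraphs are adjacent iff one lies in the closed neighbourhood of the other. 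Because $G$ is locally cyclic with $\delta=6$, every open neighbourhood is a cycle of length at least $6$, hence triangle-free, so $G$ is $K_4$-free; consequently the level-$0$ vertices of $C$ form a complete subgraph of $G$ of size at most $3$, i.e.\ they lie in a common facet. I would also note that $C$ contains a level-$2$ element: every facet of $G$ extends (using that all edges are inner, so each edge lies in two facets) to a $\Delta_2$ which is adjacent to all three of its vertices, so a clique made only of level-$0$ vertices is never maximal.

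The heart of the argument is to show that all level-$2$ elements of $C$ contain one common facet $S_0$. Fixing a level-$2$ element $S_2\in C$, every other level-$2$ element of $C$ lies in $\neig{G}{S_2}$, so the analysis is local. The essential difficulty is that Lemma \ref{Lem_HexagonalChartExtension} is unavailable here, since it requires $m\geq 3$ (compare the remark following it: a chart on $\Delta_2$ extends to all neighbouring $\Delta_2$ only when the three central vertices have degree exactly $6$). I would therefore argue by hand: using Lemma \ref{Lem_NeighbourhoodCycle}, the set $\neig{G}{S_2}\ohne S_2$ is a chordless cycle whose vertices meet at most three facets, which restricts the possible placements of a second $S'\cong\Delta_2$ inside $\neig{G}{S_2}$ and shows that each such $S'$ meets $S_2$ in a facet. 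The subtle sub-case is the ``flipped'' $\Delta_2$ that can occur when a central vertex has degree greater than $6$ (the analogue of the $\nabla_3$-exception for $m=3$); one must check that it too contains the shared facet, now as its central triangle rather than as a corner. Finally I would upgrade pairwise facet-sharing to a single common facet: two level-$2$ elements that share \emph{different} facets of $S_2$ turn out not to be adjacent to each other, so pairwise adjacency of the whole family forces all of them to contain one fixed facet $S_0$. This rigidity replaces the usual Helly-type step and is really a finite check inside $\neig{G}{S_2}$.

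Once $S_0$ is found, the level-$0$ vertices of $C$, being contained in every level-$2$ element, must be vertices of $S_0$; write $S_0=\{v_1,v_2,v_3\}$. Then every vertex of $C$ is $G_n$-adjacent to each $v_i$ --- the level-$2$ elements because they contain $S_0$, the level-$0$ vertices because they are among the $v_i$ --- so $C\subseteq\cneig{G_n}{v_1,v_2,v_3}$. By Lemma \ref{Lem_CliqueConstructionTriangle} this common neighbourhood is itself a clique; since $C$ is a complete subgraph contained in it and $C$ is maximal, we conclude $C=\cneig{G_n}{v_1,v_2,v_3}$, the clique from Lemma \ref{Lem_CliqueConstructionTriangle} with $m=0$. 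The hypothesis that $C$ contains only levels $0$ and $2$ is then automatically consistent: it says precisely that the region of $G$ around $S_0$ supports no adjacent $\Delta_4$, so no higher-level element is forced into the common neighbourhood. The main obstacle is the middle paragraph --- the direct local classification of the $\Delta_2$'s inside $\neig{G}{S_2}$ and the rigidity that forces a common facet --- since the chart-extension machinery simply cannot be invoked for $m=2$.
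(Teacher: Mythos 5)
There is a genuine gap in your strategy: it is not true that every clique of $G_n$ whose vertices all have level $0$ or $2$ is the common neighbourhood of the three vertices of a single facet (the $m=0$ instance of Lemma \ref{Lem_CliqueConstructionTriangle}). The lemma's conclusion also admits the $m=2$ instance, i.e.\ the clique $\cneig{G_n}{\mu^{1,0,0},\mu^{0,1,0},\mu^{0,0,1}}$ attached to a chart $\mu\colon\Delta_3\to T\subseteq G$. That clique consists of the central vertex of $T$, the three corner $\Delta_2$'s of $T$, and possibly some $\Delta_4$'s; whenever no such $\Delta_4$ qualifies (for instance whenever $n=2$), all its elements have level $0$ or $2$, so it falls under the hypothesis of the lemma. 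Its three level-$2$ elements pairwise intersect in facets of $T$, but these are three \emph{different} facets whose triple intersection is the single central vertex, so your central claim that all level-$2$ elements of $C$ contain one common facet $S_0$ fails here. Concretely, the step you single out as the key rigidity --- that two level-$2$ elements meeting $S_2$ in different facets cannot be adjacent --- is false: inside $\Delta_3\subseteq\Hex_3$, the images of $\Delta_2^{0,1,0}$ and $\Delta_2^{0,0,1}$ meet the image of $\Delta_2^{1,0,0}$ in two different facets, yet each of the three lies in the closed neighbourhood of each other one (all three are contained in the common $\Delta_3$, cf.\ Lemma \ref{Lem_TriangleInclusionOne}), so they are pairwise adjacent in $G_n$. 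Had your argument gone through, it would prove a false statement, e.g.\ for $G=\Hex_0$ and $n=2$, where the clique built from any $\Delta_3$ contains exactly one level-$0$ element and hence cannot equal any $\cneig{G_n}{v_1,v_2,v_3}$.

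This missing configuration is precisely what the paper's proof spends most of its effort on: after fixing a level-$2$ element $S\in C$ and assuming no corner vertex of $S$ lies in $C$, it analyses which further $\Delta_2$'s can force all three corners out of $C$, and finds that besides the case leading back to the central facet of $S$ there is exactly one other possibility, occurring when a middle vertex of $S$ has degree $6$; then $S$ and the two other $\Delta_2$'s lie in a common $T\cong\Delta_3$ and $C$ is the clique constructed from $T$. So your reduction has to be weakened to ``any two level-$2$ elements of $C$ share a facet'', and the case where these shared facets do not all coincide must be treated separately: it yields the second family of cliques rather than a contradiction. The remaining parts of your outline ($K_4$-freeness of $G$, the inclusion $C\subseteq\cneig{G_n}{v_1,v_2,v_3}$ once a genuinely common facet exists, and the maximality argument) are sound for the cliques that really are of the $m=0$ type.
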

\begin{proof}
    We start with the case where all vertices of $C$ are of level $0$, i.e. they are isomorphic
    to $\Delta_0$. In this case, they form a clique
    of $G$, i.\,e. a triangle $S\cong \Delta_1$. So, $C$ is
     constructed from $S$ by Lemma
    \ref{Lem_CliqueConstructionTriangle}. 

    For the remainder, we assume that $C$ contains a vertex of level $2$, i.e.\ a subgraph 
    $S\cong \Delta_{2}$ of $G$. Thus, $C$ lies in the closed neighbourhood 
    $N_{G_n}[S]$.
    We visualise 
    the neighbourhood in Figure \ref{Fig_mtwo}.
    Remark \ref{Rem_TranslationNeighboursExist} shows that all the depicted $\Delta_2$-shaped subgraphs exist.
    We label the subgraphs which are isomorphic to $\Delta_0$ with their preimage under a standard chart of $S$.
    Since it is not necessarily possible to extend this chart to all the 
    $\Delta_2$-shaped subgraphs in the neighbourhood, we label those in 
    a new labelling scheme. We place every label inside the central facet of the subgraph.
    Two different $\Delta_2$-shaped subgraphs
    are adjacent if and only if their central facets have facet-distance at
    most 2.

    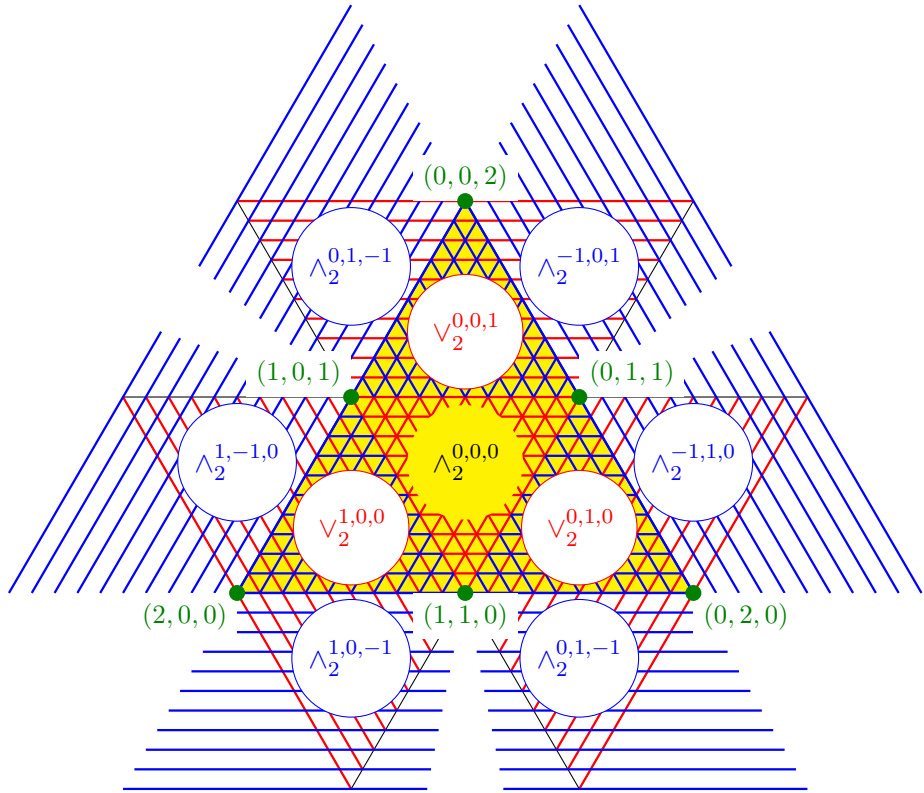
\begin{figure}[htbp]
	\centering
	\begin{tikzpicture}[scale=1.5]
	\HexagonalCoordinates{5}{5}
	
	\foreach \a\b in {20/30, 30/20, 02/03, 03/02, 32/23, 23/32}{
			\coordinate (T\a\b) at (barycentric cs:A\a=2/3,A\b=1/3);
	}
	
	\fill[yellow] (A11) -- (A31) -- (A13) -- cycle;
	\draw (A20) -- (A23) -- (A03) -- (A30) -- (A32) -- (A02) -- cycle;

	\foreach \a/\b/\c/\d/\e in {21/03/23/red/0.3, 12/30/32/red/0.3, 22/20/02/red/0.3}{
		\foreach \x in {0,0.05,0.1,0.15,0.2,0.25,0.3,0.35,0.4,0.45,0.5,0.55,0.6,0.65,0.7,0.75,0.8,0.85,0.9,0.95,1}{
			\draw[\d,line width=\e mm](barycentric cs:A\a=\x,A\b=1-\x)--(barycentric cs:A\a=\x,A\c=1-\x);
		}
	}
	\foreach \a/\b/\c/\d/\e in {12/13/22/blue/0.3, 22/12/13/blue/0.3, 21/11/12/blue/0.3,12/11/21/blue/0.3,21/31/22/blue/0.3,22/31/21/blue/0.3}{
		\foreach \x in {0,0.1,0.2,0.3,0.4,0.5,0.6,0.7,0.8,0.9,1}{
			\draw[\d,line width=\e mm](barycentric cs:A\a=\x,A\b=1-\x)--(barycentric cs:A\a=\x,A\c=1-\x);
		}
	}
\foreach \a/\b/\c/\d/\e/\f in {11/10/21/2030/blue/0.3,31/40/21/3020/blue/0.3, 11/01/12/0203/blue/0.3, 13/04/12/0302/blue/0.3, 13/14/22/2332/blue/0.3, 31/41/22/3223/blue/0.3}{
	\foreach \x in {0,0.1,0.2,0.3,0.4,0.5,0.6,0.7,0.8,0.9,1}{
		\draw[\e,line width=\f mm](barycentric cs:A\a=\x,A\b=1-\x)--(barycentric cs:A\c=\x,T\d=1-\x);
	}
}
	
	\foreach \a/\b/\c/\e in { 11/12/02/{1,-1,0}, 11/20/21/{1,0,-1}, 21/31/30/{0,1,-1}, 22/31/32/{-1,1,0}, 22/23/13/{-1,0,1}, 12/13/03/{0,1,-1}}{
		\node[shape=circle, draw=blue,fill=white] at (barycentric cs:A\a=1,A\b=1,A\c=1) {\textcolor{blue}{$\wedge_2^{\e}$}};
	} 


	\node[shape=circle, fill=yellow] at (barycentric cs:A12=1,A21=1,A22=1) {\textcolor{black}{$\phantom.\wedge_2^{0,0,0\phantom.}$}};
	
	\foreach \a/\b/\c/\e in {11/21/12/{1,0,0}, 21/31/22/{0,1,0}, 12/22/13/{0,0,1}}{
		\node[shape=circle, draw=red,fill=white] at (barycentric cs:A\a=1,A\b=1,A\c=1) {\textcolor{red}{$\phantom.\vee_{2}^{\e\phantom.}$}};
	}
	
	\foreach \p/\r/\e in {11/below left/{2,0,0},21/below/{1,1,0},31/below right/{0,2,0}, 22/above right/{0,1,1}, 13/above/{0,0,2}, 12/above left/{1,0,1}}{
		
		\node[fill=white, \r] at (A\p) {\small\textcolor{green!50!black}{$(\e)$}}; 
		\fill[green!50!black] (A\p) circle (2pt);
	}
	
	\end{tikzpicture}
	\caption{Neighbourhood of an $S\cong\Delta_2$ in $G_n$, subgraphs isomorphic to $\Delta_2$ are labelled in their middle face with the symbols $\wedge$ for 'upward' and $\vee$ for 'downward' facing }\label{Fig_mtwo}
    \end{figure}

    \noindent We describe all the cliques of $N_{G_n}[S]$ which contain $S$ using the labels in Figure \ref{Fig_mtwo}.
    \begin{enumerate}
        \item If a corner-vertex of $S$, like $(2,0,0)$, 
            is contained in the clique, 
            the common neighbourhood of this vertex 
            and $S$ is a clique, which by Lemma 
            \ref{Lem_CliqueConstructionTriangle} is constructed from the
            $\Delta_1$ in $S$ containing the corner-vertex.
        \item Assume no corner-vertex of $S$ is contained in the clique.
            For all three corner-vertices, there must be an element in 
            the clique which is not adjacent to it; otherwise, the clique would not be maximal. From the remaining 
            elements in $N_{G_n}(S)$, the three middle vertices are each 
            adjacent to exactly two corner-vertices, the other elements 
            are each adjacent to exactly one corner-vertex. Thus, to 
            exclude the corner-vertices, either the three middle vertices 
            are in $C$ or at least one other element is in $C$.
            \begin{enumerate}
                \item\label{i} In the first case, the clique is constructed 
                    from the three middle-vertices using Lemma
                    \ref{Lem_CliqueConstructionTriangle}.
                \item\label{ii} In the second case, there is an element not 
                    adjacent to two of the corner vertices. Without loss of 
                    generality, these corner-vertices are $(0,2,0)$ and 
                    $(0,0,2)$ and the element not adjacent to them is  
                    $\wedge_2^{1,0,-1}$ or $\vee_{2}^{1,0,0},$  which will 
                    be called $S_1$. Additionally, in $C$ there needs to be
                    an element not adjacent to $(2,0,0)$ called $S_2$ which 
                    is adjacent to $S_1$ since they both lie in $C$.
                    
                    Thus, $S_2$ can be neither $\wedge_{2}^{-1,1,0}$ nor 
                    $\wedge_{2}^{-1,0,1}$ nor $\wedge_{2}^{0,-1,1}$ since 
                    those are not adjacent to each of the possible $S_1$. Picking 
                    $S_2$ to be $\vee_{2}^{0,1,0}$  is only possible if 
                    $S_1$ is $\vee_{2}^{1,0,0}$. In this case, $\cneig{G_n}{S,S_1,S_2}$ is a clique 
                    containing the middle-vertices and we are in case 
                    \ref{i}. The same happens if we choose $S_2$ to be 
                    $\vee_{2}^{0,0,1}$. 
                    
                    If the degree of  $(1,1,0)$ is at least $7$, there is no 
                    other possibility for $S_2$, but if the degree of 
                    $(1,1,0)$ is $6$, the vertices $\wedge_2^{1,0,-1}$ and 
                    $\wedge_2^{1,0,-1}$ are adjacent, and if 
                    $S_1=\wedge_2^{1,0,-1}$ we can choose $S_2$ to be 
                    $\wedge_2^{1,0,-1}$. 
		    In this case, $S$, $S_1$,
                    and $S_2$ are contained in a common $T\cong \Delta_3$,
                    from which $C$ is constructed by Lemma 
                    \ref{Lem_CliqueConstructionTriangle}.\qedhere
            \end{enumerate}
    \end{enumerate}
\end{proof}

\begin{lem}\label{Lem_OddCliquesofSmallLevel}
    Let $C$ be a clique of $G_n$,
     in which every vertex is of level $1$.
     Then, $C$ is one of the
    cliques described in Lemma \ref{Lem_CliqueConstructionTriangle}
    or in Lemma \ref{Lem_CliqueConstructionVertex}.
\end{lem}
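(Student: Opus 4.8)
The plan is to reduce the statement to a purely combinatorial classification of maximal families of pairwise intersecting facets. First I would record that a level-$1$ vertex of $G_n$ is a $\Delta_1$, i.e.\ a triangle of $G$, and that since $G$ is locally cyclic with $\delta=6$ every neighbourhood $N_G(v)$ is an induced cycle of length at least $6$; in particular $G$ contains no $K_4$ and every triangle is a facet. (Note also that $n$ must be odd, so that Lemma \ref{Lem_CliqueConstructionVertex} applies.) The key preliminary observation is that two facets $T_1,T_2$ are adjacent in $G_n$ if and only if $T_1\cap T_2\neq\emptyset$. The direction ``$\Leftarrow$'' is immediate: if $v\in T_1\cap T_2$ then $T_1\subseteq\neig{G}{v}\subseteq\neig{G}{T_2}$. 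For ``$\Rightarrow$'', if $T_1\cap T_2=\emptyset$ and $T_1\subseteq\neig{G}{T_2}$, then $T_1$ would be a triangle contained in $\neig{G}{T_2}\ohne T_2$, which by Lemma \ref{Lem_NeighbourhoodCycle} (with $S=T_2\cong\Delta_1$) is an induced cycle and hence triangle-free --- a contradiction. Thus $C$ is exactly a maximal family of pairwise intersecting facets, and I would split according to whether these facets have a common vertex.

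If all facets of $C$ contain a common vertex $v$, then each is one of the $\Delta_1$ containing $v$, so $C$ is contained in the common neighbourhood $\cneig{G_n}{T\cong\Delta_1\mid v\subseteq\Delta_1}$, which is a clique by Lemma \ref{Lem_CliqueConstructionVertex}. Since $C$ is a maximal complete subgraph sitting inside this complete subgraph, the two coincide and $C$ is the clique from Lemma \ref{Lem_CliqueConstructionVertex}.

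If $C$ has no common vertex, I would first produce three facets $T_1,T_2,T_3\in C$ with $T_1\cap T_2\cap T_3=\emptyset$. If some pair, say $T_1,T_2$, meets in a single vertex $p$, then any $T_3\in C$ with $p\notin T_3$ works (such a $T_3$ exists as $C$ has no common vertex). Otherwise every pair of facets in $C$ meets in an edge; fixing $T_1\in C$, the facets sharing an edge with $T_1$ are exactly its three edge-neighbours $N_1,N_2,N_3$, so $C\subseteq\{T_1,N_1,N_2,N_3\}$, and the absence of a common vertex forces $N_1,N_2,N_3\in C$; I then take $T_i=N_i$. The central claim is that three pairwise intersecting facets with empty triple intersection are the corner triangles $\mu^{1,0,0},\mu^{0,1,0},\mu^{0,0,1}$ of a standard chart $\mu\colon\Delta_2\to S$: writing $p\in T_1\cap T_2$, $q\in T_1\cap T_3$, $r\in T_2\cap T_3$, the empty triple intersection makes $p,q,r$ pairwise distinct, the edges $pq,pr,qr$ exhibit $\{p,q,r\}$ as a facet $T_0$, and each $T_i$ is the second facet on the corresponding edge, so that the six vertices assemble into a $\Delta_2$. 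Given this chart, every facet of $C$ lies in $\cneig{G_n}{T_1,T_2,T_3}$, which is the clique of Lemma \ref{Lem_CliqueConstructionTriangle}; maximality of $C$ again yields equality.

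The main obstacle is the central claim, and specifically excluding the degenerate configurations by means of $K_4$-freeness. I expect to use repeatedly that each $N_G(x)$ is triangle-free in order to show that the pairwise intersections of $T_1,T_2,T_3$ are genuinely single vertices (an edge-intersection together with the remaining forced adjacencies would place a triangle inside some $N_G(x)$, i.e.\ produce a $K_4$), and likewise that the vertices $p,q,r$ together with the three apexes carry no extra edges, so that the induced subgraph is precisely $\Delta_2$ and $\mu$ is a bona fide standard chart. The bookkeeping guaranteeing that three facets already witness the empty intersection, and the verification that no additional facet can be adjacent to all three corners, are routine once this triangle-free neighbourhood principle is in place.
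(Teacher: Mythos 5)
Your proposal is correct and follows essentially the same route as the paper's proof: split according to whether the facets of $C$ share a common vertex, and otherwise exhibit three pairwise intersecting facets with empty triple intersection whose union is a $\Delta_2$-shaped subgraph, so that $C$ is the clique of Lemma \ref{Lem_CliqueConstructionTriangle}. The paper reaches the $\Delta_2$ by first choosing two facets meeting in a single vertex $v$ and a third facet avoiding $v$, but the underlying geometric argument (using that open neighbourhoods are induced cycles of length at least $6$) is the same; your explicit preliminary that level-$1$ adjacency in $G_n$ is equivalent to non-empty intersection is left implicit in the paper.
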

\begin{proof}
    If $C$ is not given as the common neighbourhood of 
    the set of facets incident to a given vertex 
    like in Lemma \ref{Lem_CliqueConstructionVertex}, 
    the intersection of the elements of $C$ is empty and 
    $C$ has at least three elements. Furthermore, there are two 
    elements of $C$ that do not intersect in an edge: otherwise, 
    for any three element subset of $C$ 
    there would be a vertex $v$ in the intersection of the three 
    elements and the neighbourhood of $v$ would contain a \thcir.
    
    Thus, we choose two elements $S$ and $T$ from $C$ which intersect 
    in a vertex $v$ but not in an edge. Since the common intersection 
    of all elements of $C$ is empty, there must be an element $U\in C$ 
    not containing $v$, but intersecting $S$ and $T$ in at least one 
    vertex each, which we will call $s$ and $t$. Those two vertices are 
    distinct since $S$, $T$, and $U$ do not have a common vertex, 
    and they are connected by a edge from $U$. As $s$ and $t$ also 
    lie in the neighbourhood of $v$, the edge $st$ also lies in this 
    neighbourhood. Since, by assumption, the third vertex of $U$ is 
    not $v$, it is the other common neighbour of $s$ and $t$. This 
    way, we proved that $C$ is constructed from the union of $S$, $T$, and $U$, which is $\Delta_2$-shaped, using
    Lemma \ref{Lem_CliqueConstructionTriangle}, as it is depicted in Figure \ref{Fig_triangclique}.
\end{proof}

      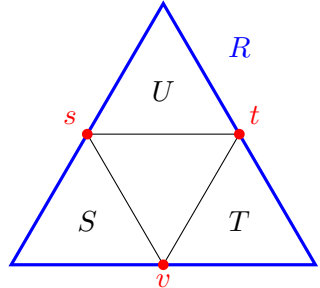
\begin{figure}[htbp]
    	\centering
    	\begin{tikzpicture}[scale=1]
    	\HexagonalCoordinates{5}{5}
    	
    	\draw (A21) -- (A12) -- (A22) -- cycle;
    	\draw[very thick, blue] (A11) -- (A31) -- (A13) -- cycle;
    		\node[blue] at (barycentric cs:A22=1,A13=1,A23=1) {$R$};
    	
    	\foreach \a/\b/\c/\e in {11/21/12/S, 21/31/22/T, 12/22/13/U}{
    		\node at (barycentric cs:A\a=1,A\b=1,A\c=1) {$\e$};
    	}
    	
    	\foreach \p/\r/\e in {21/below/{v}, 22/above right/{t}, 12/above left/{s}}{
    		\fill[red] (A\p) circle (2pt);
    		\node[red, \r] at (A\p) {$\e$}; 
    	}
    	
    	\end{tikzpicture}
    	\caption{The clique of $G_n$ containing $S,T,$ and $U$ is constructed
          from their union $R\cong\Delta_2$ using Lemma 
          \ref{Lem_CliqueConstructionTriangle}. }\label{Fig_triangclique}
    \end{figure}

\subsection{The Generic (Large) Case}
Up to now we only investigated cliques lying in the lower levels of $G_n$. The cliques left to discuss are those containing a $\Delta_m$ with $m\geq 3$.
In this generic case, we describe the neighbourhood $\neig{G_n}{S}$ of a $S\cong \Delta_m$ explicitly
by using triangle inclusion maps. Then, we classify the cliques
there explicitly.

We can describe the adjacency conditions of Definition
\ref{Def_theCliqueGraph} combinatorially with triangle inclusion maps.
	Additional to the aforementioned set
	\begin{align*}
	&\vec{D}_{0\phantom{-}} \phantom{:}= \{ (1,-1,0), (1,0,-1), (-1,1,0), (0,1,-1), (-1,0,1), (0,-1,1) \}, \intertext{we define the following sets of coordinates:}
	&\mathemph{\oldvec{D}_{-2}} \isdef \{(2,0,0),(1,1,0),(0,2,0),(0,1,1),(0,0,2),(1,0,1)\}, \\
	&\mathemph{\oldvec{D}_{-4}} \isdef \{(2,1,1),(1,2,1),(1,1,2)\}, \text{ and}\\ 
	&\mathemph{\oldvec{D}_{-6}} \isdef \{(2,2,2) \}.
	\end{align*}

\begin{lem}\label{Lem_TriangleInclusionAdjacency}
    Let $\mu: H \to F\subseteq G$ be a hexagonal chart of the 
    \mammal\enspace $G$. Let  $\vec{s},\vec{t}\in \Z^3$ and  
    $k\in \{0,2,4,6\}$ and $m\geq k$,
    be such that the images of $\Delta_m^{\vec{s}}$ and 
    $\Delta_{m-k}^{\vec{t}}$ are subsets of $H$.  Further, let $S \subset F$ be
    the image of $\mu \circ \Delta_m^{\vec{s}}$ and $T \subset F$ the
    image of $\mu \circ \Delta_{m-k}^{\vec{t}}$. 
    Then, $S$ and $T$ are adjacent
    in the clique graph $G_n$ for all $n \geq m$ with 
    $n \equiv_2 m$ if and only if $\vec{t}-\vec{s} \in \vec{D}_{-k}$.
\end{lem}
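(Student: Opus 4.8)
The plan is to reduce the adjacency condition from Definition~\ref{Def_theCliqueGraph} to a purely combinatorial statement inside the hexagonal grid, exploiting that $\mu$ is a monomorphism (hence preserves vertex degrees of inner vertices, inclusions, and facet-incidences). Since both $S$ and $T$ are images of triangle inclusion maps under the single chart $\mu$, every relevant geometric relation (containment, distance to the boundary, being a neighbour) is faithfully transported between $H \subseteq \Hex_m$ and $F \subseteq G$. Thus it suffices to verify, for each $k \in \{0,2,4,6\}$ separately, that the defining adjacency condition for that value of $k$ holds precisely when $\vec{t}-\vec{s}$ ranges over the corresponding set $\vec{D}_{-k}$. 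First I would fix $k$ and translate so that $\vec{s}=0$, writing $\vec{d} \isdef \vec{t}$, so the question becomes: for which $\vec{d}$ is the image of $\Delta_{m-k}^{\vec{d}}$ adjacent to the image of $\Delta_m$?

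For the three inclusion cases $k \in \{2,4,6\}$, the work is elementary. The condition from Definition~\ref{Def_theCliqueGraph} is $S_2 \subset S_1$ together with a boundary-distance requirement. I would characterise, using Remark~\ref{Rem_TriangleInclusionBoundaryDistance} and Lemma~\ref{Lem_TriangleInclusionTwo}, exactly which translates $\Delta_{m-k}^{\vec{d}}$ embed as subgraphs of $\Delta_m$ with the required distance to $\partial\Delta_m$. For $k=2$, containment $\Delta_{m-2}^{\vec{d}} \subseteq \Delta_m$ forces the three coordinates of $\vec{d}$ to be nonnegative with sum $2$, i.e. $\vec{d} \in V_2 \cap \Z_{\geq 0}^3 = \vec{D}_{-2}$, matching case~2 of the definition (the small exceptional inclusions of Lemma~\ref{Lem_TriangleInclusionTwo} do not arise because $\mu$ preserves degree~$6$, ruling out the $\nabla$-type subgraphs). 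For $k=4$, additionally requiring $\Delta_{m-4}^{\vec{d}} \cap \partial\Delta_m = \emptyset$ shifts the feasible region inward by one, yielding exactly $\vec{d} \in \vec{D}_{-4}$; for $k=6$, distance at least $2$ from the boundary leaves only the central translate $\vec{d} = (2,2,2) \in \vec{D}_{-6}$. In each case the boundary-distance arithmetic is a direct barycentric-coordinate computation.

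The case $k=0$ is the substantive one, and here the hard part is that the defining condition is $S \subset \neig{G}{T}$ (equivalently $T \subset \neig{G}{S}$, by the footnote), a genuine neighbourhood condition rather than plain containment. This is exactly where Lemma~\ref{Lem_HexagonalChartExtension} does the heavy lifting: for $m \geq 3$ it guarantees that every $T \cong \Delta_m$ with $T \subset \neig{G}{S}$ is, up to the single $m=3$ exception $\nabla_3$, the image of $\hat{\mu} \circ \Delta_m^{\vec{t}}$ for some $\vec{t} \in \vec{D}_0$. So for $k=0$ the adjacent same-level triangles correspond precisely to $\vec{t}-\vec{s} \in \vec{D}_0 = \vec{D}_{-0}$, as claimed. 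I would note that the $\nabla_3$ exception is \emph{not} of the form $\Delta_m^{\vec{t}}$, so it is excluded from the statement's scope by the hypothesis that $T$ be an image of a triangle inclusion map; the lemma as stated only concerns such images, so no conflict arises. For the reverse direction I would invoke Remark~\ref{Rem_TranslationNeighboursExist}, which shows each $\vec{d} \in \vec{D}_0$ whose translate lies in $H$ does yield an adjacent $\Delta_m$ inside $\neig{G}{S}$.

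The main obstacle I anticipate is handling the low levels $m < 3$ in the $k=0$ case, where Lemma~\ref{Lem_HexagonalChartExtension} is unavailable (its hypothesis $m \geq 3$ fails, and indeed the remark after that lemma flags that chart extension can break for $m=2$). For $m \in \{0,1,2\}$ I would argue the $k=0$ adjacency directly: the cases $m=0$ and $m=1$ are immediate (two vertices, resp. two triangles sharing a vertex), and for $m=2$ the neighbourhood structure is exactly the one catalogued in Figure~\ref{Fig_mtwo}, from which one reads off that the same-level neighbours are the translates by $\vec{D}_0$ (the ``$\wedge_2^{\vec{d}}$'' with $\vec{d} \in \vec{D}_0$) together with the three ``downward'' $\vee_2$ triangles; I would check that within the hypotheses of the present lemma—where $S$ and $T$ are both images of $\mu$ restricted to a common chart $H$—only the $\vec{D}_0$-translates survive, the $\vee_2$ neighbours requiring degree-$6$ conditions at interior vertices that may fail. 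I would keep the argument uniform by observing that whenever both triangles lie in one hexagonal chart, the degree-preservation of $\mu$ already pins down which translates are realisable, so the combinatorial equivalence $\vec{t}-\vec{s} \in \vec{D}_{-k}$ holds across all $m \geq k$.
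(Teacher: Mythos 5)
Your treatment of $k\in\{2,4,6\}$ matches the paper's: transport everything into the grid via the chart and a translation, then read off containment and boundary distance from the signs and sizes of the components of $\vec{t}-\vec{s}$, whose sum is fixed. Where you genuinely diverge is $k=0$. The paper handles that case by exactly the same mechanism as the others: since $S$ and $T$ both lie in the induced subgraph $F$ and $\mu$ is an isomorphism onto $F$, the condition $T\subset\neig{G}{S}$ is equivalent to $\vec{t}+\Delta_m\subset\neig{\Hex}{\vec{s}+\Delta_m}$ in the hexagonal grid, and a vertex $(v_1,v_2,v_3)$ lies in $\neig{\Hex}{\Delta_m}$ if and only if $-1\le v_i\le m+1$; combined with $\sum_i(t_i-s_i)=0$ this yields $\vec{t}-\vec{s}\in\vec{D}_0$ uniformly for all $m\ge 0$. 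This is strictly lighter than your route through Lemma \ref{Lem_HexagonalChartExtension}: you do not need to classify all $\Delta_m$-shaped neighbours of $S$ in $G$, only to decide adjacency for the particular $T$ that the hypothesis already presents as a translate inside the common chart domain $H$. The same observation disposes of your worries about the $\nabla_3$ and $\vee_2$ exceptions (they are not images of triangle inclusion maps, so the lemma makes no claim about them) and removes the need to treat $m<3$ separately.

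The one step in your $k=0$ argument that is not justified as written is the identification of parameters across two different charts. Lemma \ref{Lem_HexagonalChartExtension} expresses an adjacent $T$ as the image of $\hat{\mu}\circ\Delta_m^{\vec{d}}$ with $\vec{d}\in\vec{D}_0$, where $\hat{\mu}$ extends a \emph{standard} chart of $S$; the statement you are proving concerns $\vec{t}-\vec{s}$, which is defined relative to the given chart $\mu$. These two parametrisations of $S$ differ by a coordinate permutation, and since $\vec{D}_0$ is invariant under the action of the symmetric group on the coordinates one can indeed conclude $\vec{t}-\vec{s}\in\vec{D}_0$, but this needs to be said: as written, ``so the adjacent same-level triangles correspond precisely to $\vec{t}-\vec{s}\in\vec{D}_0$'' conflates the two charts. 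If you push the whole $k=0$ case into the grid as above, the issue never arises.
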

\begin{proof}
    Since $\mu$ is an isomorphism, $S$ and $T$ are adjacent in $G_n$ if 
    and only if  the images of 
    $\Delta_m^{\vec{s}}$ and $\Delta_{m-k}^{\vec{t}}$ are connected 
    by an edge of the $n$-th iterated geometric clique graph 
    $(\Hex_{m+|\vec{s}|})_n$ of the hexagonal grid. Therefore, it is sufficient 
    to prove the claim
    for $G=\Hex_m$. Since 
    \begin{equation*}
        \Hex_{m+|\vec{s}|} \to \Hex_m, \qquad a \mapsto a-\vec{s}
    \end{equation*}
    is an isomorphism between hexagonal grids, 
    we can assume without loss of generality, that
    $S = \Delta_m$ and $T$ is the image of $\Delta_{m-k}^{\vec{t}-\vec{s}}$ with corners 
    $(m-k,0,0)+\vec{t}-\vec{s}$,  $(0,m-k,0)+\vec{t}-\vec{s}$, and $(0,0,m-k)+\vec{t}-\vec{s}$.
    Now, we distinguish with respect to $k$:
    \begin{enumerate}
        \item $k=0$: $T$ is adjacent to $S$ if the corners of $T$ lie
            in the neighbourhood $\neig{G}{S}$. A vertex $(v_1,v_2,v_3)\in \Hex_m$
            lies
            in $\neig{G}{\Delta_m}$ if and only if $-1 \leq v_i \leq m+1$.
            Since the components of $\vec{t}-\vec{s}$ sum to 0, this is
            equivalent to $\vec{t}-\vec{s}\in \vec{D}_0$.
        \item $k=2$: $T \subset S$ if and only if the corners of $T$ lie
            in $S$. Equivalently, 
            all components of $\vec{t}-\vec{s}$ have to be non-negative. Since the 
            components sum to 2, this is equivalent to $\vec{t}-\vec{s} \in \vec{D}_{-2}$.
        \item $k=4$: The corners of $T$ do not lie on the boundary if and 
            only if all components of $\vec{t}-\vec{s}$ are at least 1. Since
            the components sum to 4, this is equivalent to $\vec{t}-\vec{s} \in \vec{D}_{-4}$.
        \item $k=6$: The corners of $T$ have distance 2 from the boundary
            of $S$ if and only if all components of $\vec{t}-\vec{s}$ are at least 2.
            Since the components sum to 6, 
            this is equivalent to $\vec{t}-\vec{s} \in \vec{D}_{-6}$. \qedhere
    \end{enumerate}
\end{proof}

\noindent From every clique we can choose an element $S$ of maximal level $m$.
Then, we describe the clique as a clique of the lower-level neighbourhood 
$N_{G_n}[S]\cap V(G_m)$. To describe $N_{G_n}[S]$ combinatorially, we 
introduce the \emph{local hexagonal graph}: Its vertices are
    \begin{equation*}
        \mathemph{V_{LHG}}\isdef\left\{v_0^{0,0,0}\right\} \cup \left\{v_r^{\vec{d}} \mid r \in \{0,-2,-4,-6\}, \vec{d} \in \vec{D}_r \right\}
    \end{equation*}
    and its edges are given by
    \begin{equation*}
        \mathemph{E_{LHG}}\isdef\left\{(v_{r}^{\vec{x}},v_{r-k}^{\vec{y}})\mid \vec{y}-\vec{x} \in \vec{D}_{-k}\text{ for a }k\in\{0,2,4,6\} \right\}.
    \end{equation*}
    For a set $Q$ of vertices of $G_n$ of a given level $m$, the 
    \emph{lower level neighbourhood} of $Q$ is defined as the 
    set $\cneig{G_m}{Q} \subseteq G_n$, which consists of all the 
    common neighbours of the elements in $Q$ that have a level of at most $m$.

\begin{lem}\label{Lem_LowerLevelNeighbourhoodToLHG}
    Let $S \cong \Delta_m$ be a vertex in $G_n$ with $m \geq 3$. 
    The lower-level-neighbourhood of $S$ in $G_n$ is 
    isomorphic to an induced subgraph of 
    the local hexagonal
    graph.
\end{lem}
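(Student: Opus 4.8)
The plan is to pull the entire lower-level neighbourhood of $S$ back through a single chart into the hexagonal grid, where Lemma~\ref{Lem_TriangleInclusionAdjacency} turns every adjacency into a membership test in one of the sets $\vec{D}_{-k}$, and then to recognise these tests as exactly the edge relation $E_{LHG}$.

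First I would apply Lemma~\ref{Lem_HexagonalChartExtension} (this is where $m\geq 3$ is used) to extend the standard chart $\Delta_m\iso{\mu}S$ to a hexagonal chart $\hat\mu\colon E\to\hat S$ with $\Delta_m\subset E$ and $S\subset\hat S\subset G$. Since $\mu\subseteq\hat\mu$, every lower-level subgraph $T\subset S$ as well as every same-level neighbour $T\cong\Delta_m$ with $T\subset\neig{G}{S}$ lies in $\hat S$, so all members of $\cneig{G_m}{S}$ are parametrised by the one chart $\hat\mu$. Classifying their $\hat\mu$-preimages in $E$ via Lemma~\ref{Lem_TriangleInclusionTwo}, Lemma~\ref{Lem_TriangleInclusionOne} and Remark~\ref{Rem_TriangleInclusionBoundaryDistance} (for the levels $m-2,m-4,m-6$) together with Lemma~\ref{Lem_HexagonalChartExtension} (for level $m$), I find that --- apart from a few downward-oriented exceptions addressed below --- each preimage is a standard upward triangle $\Delta_{m-k}^{\vec{t}}$ with $\vec{t}\in\vec{D}_{-k}$.

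Next I would define the embedding $\Phi$ into the local hexagonal graph by $S\mapsto v_0^{0,0,0}$ and $\hat\mu\circ\Delta_{m-k}^{\vec{t}}\mapsto v_{-k}^{\vec{t}}$. Injectivity is clear, since $\hat\mu$ is injective and distinct translation vectors yield distinct subgraphs. For the induced-subgraph property I would invoke Lemma~\ref{Lem_TriangleInclusionAdjacency}: for two standard pieces sitting in the common domain $E$, adjacency in $G_n$ holds if and only if the difference of their translation vectors lies in the appropriate $\vec{D}_{-k}$, which is verbatim the defining condition of $E_{LHG}$. Thus $\Phi$ is an isomorphism onto an induced subgraph, and for $m\geq 6$, where the inclusion lemmas produce no exceptions, the proof is already complete.

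The hard part will be the small range $m\in\{3,4,5\}$, where the inclusion lemmas genuinely produce downward triangles that are \emph{not} images of any triangle inclusion map and hence are \emph{not} covered by Lemma~\ref{Lem_TriangleInclusionAdjacency}: the rotated neighbour $\nabla_3$ together with the three $\nabla_1^{\vec{e}}$ when $m=3$, the central $\nabla_2$ when $m=4$, and the core's $\nabla_1$ when $m=5$. I would first note the numerical coincidence that in each case $\cneig{G_m}{S}$ has at most $|V_{LHG}|=17$ vertices, the downward exceptions filling precisely the slots $v_{-4}^{\vec{d}}$ and $v_{-6}^{(2,2,2)}$ that stay empty because the levels $m-4$ and/or $m-6$ do not exist for $m<6$; accordingly I would set $\nabla_1^{\vec{e}}\mapsto v_{-4}^{\vec{d}}$ and $\nabla_3,\nabla_2,\nabla_1\mapsto v_{-6}^{(2,2,2)}$ in the respective cases. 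To check that these assignments preserve \emph{and} reflect adjacency, I would compute the incidences of the finitely many downward triangles directly in the grid, exploiting the point-reflection symmetry $(x_1,x_2,x_3)\mapsto(c-x_1,c-x_2,c-x_3)$ of $\Hex$, which preserves $\vec{D}_0$ and interchanges upward and downward triangles, to reduce each such computation to the upward case already handled by Lemma~\ref{Lem_TriangleInclusionAdjacency}. Matching the resulting incidence lists against the $E_{LHG}$-edges at $v_{-4}^{\vec{d}}$ and $v_{-6}^{(2,2,2)}$ is then a finite verification, and this bookkeeping over the three exceptional values of $m$ is the only genuinely laborious step.
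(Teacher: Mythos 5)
Your proposal is correct and follows essentially the same route as the paper: extend the standard chart via Lemma \ref{Lem_HexagonalChartExtension}, use Lemma \ref{Lem_TriangleInclusionAdjacency} to identify adjacency with the $\vec{D}_{-k}$ membership tests defining $E_{LHG}$, and then map the exceptional downward triangles for $m\in\{3,4,5\}$ (the rotated $\nabla_3$ and the $\nabla_1^{\vec{e}}$ for $m=3$, the central $\nabla_2$ for $m=4$, the core $\nabla_1$ for $m=5$) into the vacant $v_{-4}$ and $v_{-6}^{2,2,2}$ slots, verifying their incidences by a finite check. The paper performs that last verification directly by inspection of the relevant figures rather than via the point-reflection symmetry you suggest, but this is only a presentational difference.
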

\begin{proof}
    We give a graph monomorphism $\phi\colon N_{G_n}[S]\cap G_m\to LHG$ 
    that maps non-edges to non-edges.
    We start with the generic case $m \geq 6$ and a standard chart 
    $\Delta_m \to S$. By Lemma \ref{Lem_HexagonalChartExtension}, 
    we can extend it to a hexagonal chart 
    $\mu: E \to G$ such that all adjacent $T \cong \Delta_m$ are 
    contained.
    We have the following adjacencies of smaller level:
    \begin{enumerate}
        \item The inclusions of $\Delta_{m-2}$ into $\Delta_m= S$ are all described
            by triangle inclusion maps since $m > 4$ 
            (Lemma \ref{Lem_TriangleInclusionTwo}).
        \item The inclusions of $\Delta_{m-4}$ into $\Delta_{m-3}$ (compare
            Remark \ref{Rem_TriangleInclusionBoundaryDistance}) are all
            described by triangle inclusion maps since $m-3 > 2$ 
            (Lemma \ref{Lem_TriangleInclusionOne}).
        \item The inclusion of $\Delta_{m-6}$ into $\Delta_{m-6}$ (compare
            Remark \ref{Rem_TriangleInclusionBoundaryDistance}) is unique
            and also given by a triangle inclusion map.
    \end{enumerate}
    Thus, all adjacent
    triangles of smaller level are given by triangle inclusion maps. 
    Therefore, by Lemma \ref{Lem_TriangleInclusionAdjacency}, 
    $\phi(\mu(\Delta_{m-k}^{\vec{e}}))=v_{-k}^{\vec{e}}$ is a monomorphism 
    of the required property, but it is not necessarily an isomorphism since not all 
    of the $\vec{D}_0$-translated neighbours of $S$ need to  be present. 
    
    We continue with the case $m=5$, illustrated in Figure \ref{Fig_AdjacencyFive}. 
    Since $5 > 4$, all neighbours of level
    $m-2$ are given by triangle inclusion maps (Lemma \ref{Lem_TriangleInclusionTwo}).
    For level $m-4=1$, we need to consider inclusions of $\Delta_1$ into
    $\Delta_{2}$ (Remark \ref{Rem_TriangleInclusionBoundaryDistance}). By
    Lemma \ref{Lem_TriangleInclusionOne}, one exceptional case occurs: a 
    graph $T \cong \Delta_1$ with vertices $(2,1,2)$, $(2,2,1)$, and $(1,2,2)$. 
    However, there is no neighbour of level $m-6$ since $m-6=-1$. Thus, 
    we define $\phi$ as in the generic case, but we map $T$ to $v_{-6}^{2,2,2}$.
    Since Lemma \ref{Lem_TriangleInclusionAdjacency} shows the correct edge 
    correspondence for all neighbours given by triangle inclusion maps, it remains
    to show that the edges of the local hexagonal graph correctly describe the
    adjacencies of $v_{-6}^{2,2,2}$.
    
    \begin{figure}[htbp]
    	\centering
    	\def\txtsize{\small}
    	\begin{tikzpicture} [scale=0.8, inner sep=0.1]
    	\HexagonalCoordinates{5}{5}
    	
    	\foreach \a/\b/\c/\col/\l in {11/21/12/yellow/{$v_{-4}^{2,1,1}$}, 21/31/22/yellow/{$v_{-4}^{1,2,1}$}, 12/22/13/yellow/{$v_{-4}^{1,1,2}$}, 21/22/12/lime/{$v_{-6}^{2,2,2}$}}{
    		\fill[\col] (A\a) -- (A\b) -- (A\c) -- cycle;
    		\node at (barycentric cs:A\a=1,A\b=1,A\c=1) {\txtsize\l};
    	}
    	
    	\foreach \i in {0,...,4}{
    		\pgfmathparse{int(5-\i)}
    		\pgfmathsetmacro{\opp}{\pgfmathresult}
    		\draw (A0\i) -- (A\opp\i);
    		\draw (A\i0) -- (A\i\opp);
    		\draw (A0\opp) -- (A\opp0);
    	}
    	\foreach \a/\b/\c in {14/11/41, 01/31/04, 10/13/40}{
    		\draw[red, line width=1 mm] (A\a) -- (A\b) -- (A\c) -- cycle;
    	}

    	\foreach \a/\b/\c/\d/\e in { 13/10/40/red/0.55, 31/01/04/red/0.55, 11/41/14/red/0.55}{
    		\foreach \x in {0,0.05,0.1,0.15,0.2,0.25,0.3,0.35,0.4,0.45,0.5,0.55,0.6,0.65,0.7,0.75,0.8,0.85,0.9,0.95,1}{
    			\draw[\d,line width=\e mm](barycentric cs:A\a=\x,A\b=1-\x)--(barycentric cs:A\a=\x,A\c=1-\x);
    		}
    	}
    	
    	\foreach \a/\b/\c in {02/32/05, 00/30/03, 20/23/50}{
    		\draw[blue, line width=1mm] (A\a) -- (A\b) -- (A\c) -- cycle;
    	}
    	
    	\foreach \a/\b/\c/\d/\e in { 05/02/32/blue/0.55, 50/20/23/blue/0.55, 00/30/03/blue/0.55}{
    		\foreach \x in {0,0.05,0.1,0.15,0.2,0.25,0.3,0.35,0.4,0.45,0.5,0.55,0.6,0.65,0.7,0.75,0.8,0.85,0.9,0.95,1}{
    			\draw[\d,line width=\e mm](barycentric cs:A\a=\x,A\b=1-\x)--(barycentric cs:A\a=\x,A\c=1-\x);
    		}
    	}
    	
    	\foreach \a/\b in {10/40, 01/04, 41/14}{
    		\draw[red, line width=0.3 mm] (A\a) -- (A\b);
    	}
    	
    	\node[circle,draw=red, fill=white] at (A22) {\tiny \textcolor{red}{$v_{-2}^{0,1,1}$}};
    	\node[circle,draw=red, fill=white] at (A21) {\tiny \textcolor{red}{$v_{-2}^{1,1,0}$}};
    	\node[circle,draw=red, fill=white] at (A12) {\tiny \textcolor{red}{$v_{-2}^{1,0,1}$}};
    	


    	\node[circle,draw=blue, fill=white] at (A13) {\tiny \textcolor{blue}{$v_{-2}^{0,0,2}$}};
    	\node[circle,draw=blue, fill=white] at (A31) {\tiny \textcolor{blue}{$v_{-2}^{0,2,0}$}};
    	\node[circle,draw=blue, fill=white] at (A11) {\tiny \textcolor{blue}{$v_{-2}^{2,0,0}$}};	
    	
    	
    	\foreach \a/\b/\c/\col/\l in {11/21/12/yellow/{$v_{-4}^{2,1,1}$}, 21/31/22/yellow/{$v_{-4}^{1,2,1}$}, 12/22/13/yellow/{$v_{-4}^{1,1,2}$}, 21/22/12/lime/{$v_{-6}^{2,2,2}$}}{
    		\node[circle,fill=\col] at (barycentric cs:A\a=1,A\b=1,A\c=1) {\tiny\l};
    	}
    	\end{tikzpicture}
    	\caption{Adjacencies for $m=5$}
    	\label{Fig_AdjacencyFive}
    \end{figure}
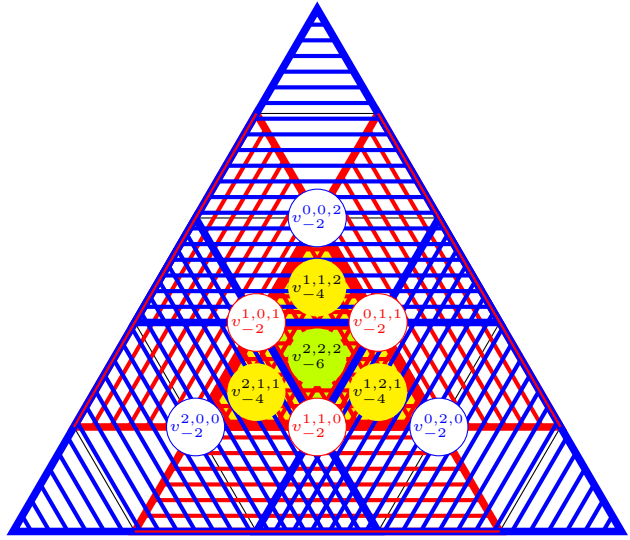

    \begin{itemize}
        \item By definition, the middle $\Delta_1$ is adjacent to $\Delta_5$ in $G_n$ as well as $v_0^{0,0,0}$ and $v_{-6}^{2,2,2}$ are adjacent in $LHG$.
        \item Furthermore, $T$ is adjacent to
            the three triangles $\Delta_3^{0,1,1}, \Delta_3^{1,0,1}$, and 
            $\Delta_3^{1,1,0}$ (as an exceptional case in
            Lemma 
            \ref{Lem_TriangleInclusionTwo}), exactly as in the
            local hexagonal graph.
        \item Since $T$ is adjacent to all three relevant 
            $\Delta_1$, the description of the 
            local hexagonal graph is correct again.
    \end{itemize}

    Next, we move on to $m=4$. Again, the only difference to the 
    generic case is the designated preimage of $v_{-6}^{2,2,2}$, 
    which is a $\nabla_2$ with corners
    $(2,2,0)$, $(0,2,2)$, and $(2,0,2)$. 
    As can be seen in Figure \ref{Fig_AdjacencyFour}, we check the adjacencies to the levels 0 and 2. Both are 
    satisfied again. 
 \begin{figure}[htbp]
	\centering
	\begin{tikzpicture} [scale=1.02,inner sep=1.0]
	\HexagonalCoordinates{5}{5}
	
	\foreach \a/\b/\c in {33/31/13}{
		\fill[yellow] (A\a) -- (A\b) -- (A\c) -- cycle;
	}
	
	\foreach \a/\b/\c in {11/31/13}{
		
		\foreach \x in {0.05,0.1,0.15,0.2,0.25,0.3,0.35,0.4,0.45,0.5,0.55,0.6,0.65,0.7,0.75,0.8,0.85,0.9,0.95,1}
		\draw[red,line width=0.4mm](barycentric cs:A\a=\x,A\b=1-\x)--(barycentric cs:A\a=\x,A\c=1-\x);
	}

	\foreach \a/\b/\c in {33/31/51}{
		
		\foreach \x in {0.05,0.1,0.15,0.2,0.25,0.3,0.35,0.4,0.45,0.5,0.55,0.6,0.65,0.7,0.75,0.8,0.85,0.9,0.95,1}
		\draw[red,line width=0.4mm](barycentric cs:A\c=\x,A\a=1-\x)--(barycentric cs:A\c=\x,A\b=1-\x);
	}

	\foreach \a/\b/\c in {33/13/15}{
		
		\foreach \x in {0.05,0.1,0.15,0.2,0.25,0.3,0.35,0.4,0.45,0.5,0.55,0.6,0.65,0.7,0.75,0.8,0.85,0.9,0.95,1}
		\draw[red,line width=0.4mm](barycentric cs:A\c=\x,A\a=1-\x)--(barycentric cs:A\c=\x,A\b=1-\x);
	}

	\foreach \a/\b/\c in {22/42/24}{
		
		\foreach \x in {0.05,0.1,0.15,0.2,0.25,0.3,0.35,0.4,0.45,0.5,0.55,0.6,0.65,0.7,0.75,0.8,0.85,0.9,0.95,1}
		\draw[blue,line width=0.4mm](barycentric cs:A\a=\x,A\b=1-\x)--(barycentric cs:A\a=\x,A\c=1-\x);
	}

	\foreach \a/\b/\c in {12/32/14}{
		
		\foreach \x in {0.05,0.1,0.15,0.2,0.25,0.3,0.35,0.4,0.45,0.5,0.55,0.6,0.65,0.7,0.75,0.8,0.85,0.9,0.95,1}
		\draw[blue,line width=0.4mm](barycentric cs:A\b=\x,A\a=1-\x)--(barycentric cs:A\b=\x,A\c=1-\x);
	}

	\foreach \a/\b/\c in {21/23/41}{
		
		\foreach \x in {0.05,0.1,0.15,0.2,0.25,0.3,0.35,0.4,0.45,0.5,0.55,0.6,0.65,0.7,0.75,0.8,0.85,0.9,0.95,1}
		\draw[blue,line width=0.4mm](barycentric cs:A\b=\x,A\a=1-\x)--(barycentric cs:A\b=\x,A\c=1-\x);
	}    
	
	\draw (A11) -- (A51) -- (A15) -- (A11)
	(A12) -- (A42) -- (A41) -- (A14) -- (A24) -- (A21) -- (A12)
	(A13) -- (A31) -- (A33) -- (A13);
	

	\foreach \p in {22, 32, 23}{
		\fill[green!80!black] (A\p)  circle (3pt);
		
		\foreach \a/\b/\c/\e in {22/12/21/{2,0,0},31/51/33/{0,2,0}, 13/33/15/{0,0,2}}{
			\node[circle, draw=red,fill=white] at (barycentric cs:A\a=1,A\b=1,A\c=1) (l\a\b\c) {\small \textcolor{red}{$v_{-2}^{\e}$}};
			
		}	
	} 
	
	\foreach \a/\b/\c/\e in {21/41/23/{1,1,0}, 12/32/14/{1,0,1}, 22/42/24/{0,1,1}}{
		\node[circle, draw=blue,fill=white] at (barycentric cs:A\a=1,A\b=1,A\c=1) {\small\textcolor{blue}{$v_{-2}^{\e}$}};
	}	
	
	\foreach \a/\b/\c/\e in {40/31/30/{2,1,1}, 43/34/33/{1,2,1}, 03/04/13/{1,1,2}}{
		\node[circle, draw=black,fill=green!80!black] at (barycentric cs:A\a=1,A\b=1,A\c=1) (l\a) {\tiny$v_{-4}^{\e}$};
	}
	\draw (l40) edge[black,out=160,in=240,->] (A22); 
	\draw (l43) edge[black,out=270,in=0,->] (A32);
	\draw (l03) edge[black,out=40,in=120,->] (A23);
	
	\node[circle,draw=black,fill=yellow] at (U22) {\tiny\textcolor{black}{$v_{-6}^{2,2,2}$}};
	
	\end{tikzpicture}
	\caption{Adjacencies for $m=4$ }
	\label{Fig_AdjacencyFour}
\end{figure}
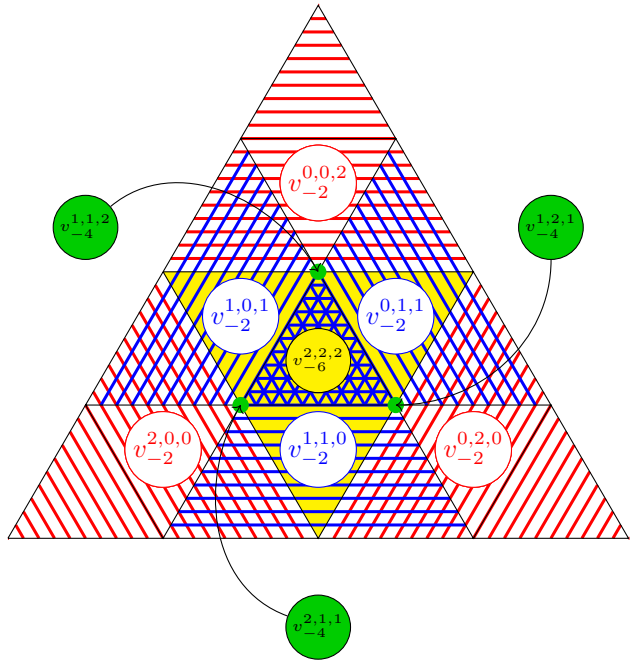

    Finally, we deal with $m=3$, with several differences to the generic case:
    \begin{enumerate}
        \item There is another adjacent $\Delta_3$ adjacent to $S$ which is ``facing down''.
            We denote it by $T$ and set
            $\phi(T)=v_{-6}^{2,2,2}$.
        \item There are the three subgraphs isomorphic to $\Delta_1$ from Lemma
            \ref{Lem_TriangleInclusionTwo} adjacent to $S$, called
            $T_{1,0,0}, T_{0,1,0}$, and $T_{0,0,1}$, and we map 
            them by $\phi(T_{1,0,0})=v_{-4}^{2,1,1}$, 
            $\phi(T_{0,1,0})=v_{-4}^{1,2,1}$, and 
            $\phi(T_{0,0,1})=v_{-4}^{1,1,2}$. 
    \end{enumerate}
    Figure \ref{Fig_AdjacencyThree}  shows that
    the local hexagonal graph 
    describes the adjacencies correctly.\qedhere

     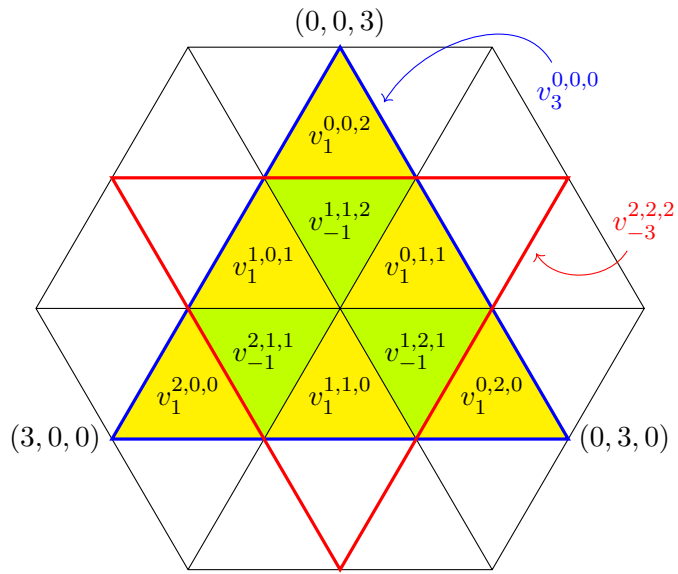
\begin{figure}[htbp]
     	\centering
        \begin{tikzpicture} 
        
            \HexagonalCoordinates{4}{4}
    
            \foreach \a/\b/\c/\e in {11/21/12/{2,0,0}, 21/31/22/{1,1,0}, 31/41/32/{0,2,0}, 12/22/13/{1,0,1}, 22/32/23/{0,1,1}, 13/23/14/{0,0,2}}{
                \fill[yellow] (A\a) -- (A\b) -- (A\c) -- cycle;
                \node at (barycentric cs:A\a=1,A\b=1,A\c=1) {$v_1^{\e}$};
            }
            \foreach \a/\b/\c/\e in {21/12/22/{2,1,1}, 22/31/32/{1,2,1}, 13/23/22/{1,1,2}}{
                \fill[lime] (A\a) -- (A\b) -- (A\c) -- cycle;
                \node at (barycentric cs:A\a=1,A\b=1,A\c=1) {$v_{-1}^{\e}$};
            }
    
            \draw (A11) -- (A41) -- (A14) -- (A11)
            (A30) -- (A33) -- (A03) -- (A30)
            (A20) -- (A40) -- (A42) -- (A24) -- (A04) -- (A02) -- (A20)
            (A20) -- (A24)
            (A40) -- (A04)
            (A02) -- (A42);
            
            \draw[very thick, blue] (A11) -- (A41) -- (A14) -- cycle;
            \node[blue] at (barycentric cs:A24=1,A33=1,A34=1) (l1) {$v_3^{0,0,0}$};
            \node at (barycentric cs:A14=1,A23=1) (m1) {};
             \draw (l1) edge[blue,out=120,in=60,->] (m1);

            \draw[very thick, red] (A03) -- (A30) -- (A33) -- cycle;
            \node[red] at (barycentric cs:A42=1,A33=1,A43=1) (l2) {$v_{-3}^{2,2,2}$};
            \node at (barycentric cs:A33=1,A32=1) (m2) {};
            \draw (l2) edge[red,out=240,in=300,->] (m2);
            
            \foreach \p/\r/\l in {11/left/{$(3,0,0)$}, 41/right/{$(0,3,0)$}, 14/above/{$(0,0,3)$}}{
                \node[\r] at (A\p) {\l};
            }
        \end{tikzpicture}
        \caption{Adjacencies for $m=3$}
        \label{Fig_AdjacencyThree}
    \end{figure}
\end{proof}

\noindent We describe the cliques of the local hexagonal graph.

\begin{lem}\label{Lem_CliquesOfLocalHexagonalGraph}
    Let $C$ be a clique in the local hexagonal graph with $v_0^{0,0,0} \in C$.
    Then, one of the following three cases holds:
    \begin{align*}
        1.\quad C = \mathemph{C_{-6}} &\isdef \{v_0^{0,0,0},v_{-2}^{1,1,0}, v_{-2}^{0,1,1}, 
            v_{-2}^{1,0,1}, v_{-4}^{2,1,1}, v_{-4}^{1,2,1}, v_{-4}^{1,1,2}, 
            v_{-6}^{2,2,2}\}\\
            &\ =\cneig{LHG}{v_{-4}^{2,1,1}, v_{-4}^{1,2,1}, v_{-4}^{1,1,2}}=\neig{LHG}{v_{-6}^{2,2,2}},\\
        2.\quad C = \mathemph{C_{-4}^{\oldvec{e}}} &\isdef 
            \{ v_0^{-(1,0,0)+\vec{e}}, v_0^{-(0,1,0)+\vec{e}}, v_0^{-(0,0,1)+\vec{e}},\\
            &\qquad v_{-2}^{(1,0,0)+\vec{e}}, v_{-2}^{(0,1,0)+\vec{e}}, v_{-2}^{(0,0,1)+\vec{e}}, v_{-4}^{(1,1,1)+\vec{e}}\} \\
            &\ =\cneig{LHG}{v_{-2}^{(1,0,0)+\vec{e}}, v_{-2}^{(0,1,0)+\vec{e}}, v_{-2}^{(0,0,1)+\vec{e}}}\\
            &\ =\neig{LHG}{v_{-2}^{2\vec{e}}} \text{ for an $\vec{e} \in \vec{E}$,}\\
        3. \quad C = \mathemph{C_{-2}^{\oldvec{e}}} &\isdef
            \{ v_0^{(1,0,0)-\vec{e}}, v_0^{(0,1,0)-\vec{e}}, 
            v_0^{(0,0,1)-\vec{e}}, v_{-2}^{(1,1,1)-\vec{e}} \}\\
            &\ =\cneig{LHG}{v_0^{(1,0,0)-\vec{e}}, v_0^{(0,1,0)-\vec{e}}, v_0^{(0,0,1)-\vec{e}}} \text{ for an $\vec{e} \in \vec{E}$.}
    \end{align*}
\end{lem}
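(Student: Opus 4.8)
The plan is to classify the maximal cliques of the local hexagonal graph containing $v_0^{0,0,0}$ by exploiting that $v_0^{0,0,0}$ is a universal vertex and then analysing the remaining finite graph level by level. The starting point is the observation that for every $k\in\{0,2,4,6\}$ and every $\vec{d}\in\vec{D}_{-k}$ the difference $\vec{d}-(0,0,0)=\vec{d}$ lies in $\vec{D}_{-k}$, so $v_0^{0,0,0}$ is adjacent to each of the sixteen other vertices. Hence a set $C\ni v_0^{0,0,0}$ is a maximal clique if and only if $C\ohne\{v_0^{0,0,0}\}$ is a maximal clique of the induced subgraph $H\isdef LHG-v_0^{0,0,0}$; I may carry $v_0^{0,0,0}$ along silently and argue about the other vertices. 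Throughout I would use the coordinate-permutation action of the symmetric group on three points (Subsection \ref{Subsect_Hexagonal}), which fixes $v_0^{0,0,0}$ and $v_{-6}^{2,2,2}$ and acts transitively on the three vertices of level $-4$, on the three \emph{pure} level-$(-2)$ vertices $v_{-2}^{2\vec{e}}$, and on the three \emph{mixed} level-$(-2)$ vertices $v_{-2}^{(1,1,1)-\vec{e}}$; this lets me fix one representative $\vec{e}$ in the cases below.

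The case distinction is governed by which ``deep'' vertices occur. First I would verify directly from $E_{LHG}$ the two closed-neighbourhood identities already recorded in the statement, $\neig{LHG}{v_{-6}^{2,2,2}}=C_{-6}$ and $\neig{LHG}{v_{-2}^{2\vec{e}}}=C_{-4}^{\vec{e}}$, and check that both of these sets are themselves cliques (all pairwise differences of their labels lie in the appropriate $\vec{D}_{-k}$). A vertex whose closed neighbourhood is a clique is simplicial and so lies in a unique maximal clique, namely that closed neighbourhood. Therefore, if $v_{-6}^{2,2,2}\in C$ then $C=C_{-6}$; and since any two pure vertices are non-adjacent, if $v_{-6}^{2,2,2}\notin C$ but some $v_{-2}^{2\vec{e}}\in C$ then $C=C_{-4}^{\vec{e}}$. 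This disposes of the first two cases.

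It remains to treat a maximal clique $C$ avoiding $v_{-6}^{2,2,2}$ and every pure level-$(-2)$ vertex, and to show $C=C_{-2}^{\vec{e}}$. The first step is to exclude vertices of level $-4$: if $v_{-4}^{(1,1,1)+\vec{e}}\in C$, then either $C$ contains no directional level-$0$ vertex, in which case $v_{-6}^{2,2,2}$ is adjacent to all of $C$ and enlarges it, or $C$ does contain one, which by the adjacency relations forces $C$ to miss the other two level-$(-4)$ vertices and the opposite mixed vertex, so that the pure vertex $v_{-2}^{2\vec{e}}$ is adjacent to all of $C$ and again enlarges it; either way maximality is contradicted. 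Thus $C$ uses only directional level-$0$ and mixed level-$(-2)$ vertices. Here I would invoke the explicit local geometry: the six directional vertices form a $6$-cycle, the three mixed vertices form a triangle, and each mixed vertex is adjacent to exactly four consecutive directional vertices, the three ranges overlapping pairwise in two positions. A short count then shows a maximal such clique contains exactly one mixed vertex $v_{-2}^{(1,1,1)-\vec{e}}$ (two mixed vertices admit a common pure neighbour, none admits a mixed neighbour) together with the two central directional vertices $v_0^{(0,1,0)-\vec{e}}$ and $v_0^{(0,0,1)-\vec{e}}$ of its range, and one checks no further vertex is adjacent to all three; this is exactly $C_{-2}^{\vec{e}}$.

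The routine part is the repeated verification that a candidate difference lies, or fails to lie, in some $\vec{D}_{-k}$; these are finite checks that the coordinate symmetry cuts down substantially. The genuine obstacle is this last case: unlike $v_{-6}^{2,2,2}$ and the pure level-$(-2)$ vertices, the mixed vertices are \emph{not} simplicial, so the clique cannot simply be read off a closed neighbourhood. Instead one must show that every clique reaching a level-$(-4)$ or a second mixed vertex extends downward into some $C_{-6}$ or $C_{-4}^{\vec{e}}$, and then pin down precisely which two directional vertices accompany the single surviving mixed vertex. Making this extension argument watertight — in particular confirming that the enlarging vertex ($v_{-6}^{2,2,2}$, respectively a pure $v_{-2}^{2\vec{e}}$) really is adjacent to the entire clique, including any directional vertices present — is where the care is required.
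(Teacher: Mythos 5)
Your proposal is correct, and its skeleton coincides with the paper's: both first observe that $\neig{LHG}{v_{-6}^{2,2,2}}=C_{-6}$ and $\neig{LHG}{v_{-2}^{2\vec{e}}}=C_{-4}^{\vec{e}}$ are themselves cliques, so that any maximal clique meeting $v_{-6}^{2,2,2}$ (respectively a ``pure'' $v_{-2}^{2\vec{e}}$) is forced to equal that closed neighbourhood. Where you diverge is the residual case. The paper dispatches it in two lines of maximality reasoning: since $C$ is maximal and omits $v_{-6}^{2,2,2}$, it must contain a vertex non-adjacent to it, which (pure vertices being excluded) is a directional $v_0^{\vec{e_2}-\vec{e_1}}$; since $C$ also omits $v_{-2}^{2\vec{e_2}}$, it must contain a vertex non-adjacent to \emph{that}, and among the neighbours of $v_0^{\vec{e_2}-\vec{e_1}}$ there is exactly one candidate, namely $v_0^{\vec{e_3}-\vec{e_1}}$; the common neighbourhood of these two directional vertices and $v_0^{0,0,0}$ is then precisely $C_{-2}^{\vec{e_1}}$. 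You instead perform a direct structural enumeration: first eliminating level-$(-4)$ vertices by exhibiting an enlarging vertex ($v_{-6}^{2,2,2}$ or a pure $v_{-2}^{2\vec{e}}$, depending on whether a directional vertex is present), then analysing the remaining graph as a $6$-cycle of directional vertices plus a triangle of mixed vertices with $4$-windows overlapping pairwise in two positions. Both arguments are sound and I verified your window combinatorics (the windows are the position sets $\{1,2,3,4\}$, $\{5,6,1,2\}$, $\{3,4,5,6\}$, two mixed vertices do share a pure common neighbour, and only the two \emph{central} positions of a window yield a maximal clique). The paper's route is shorter because it lets maximality against the two excluded vertices do the work; yours is more self-contained and makes the geometry of the local hexagonal graph explicit, at the cost of several more finite adjacency checks — which, as you note, is exactly where the care is needed, and you have placed it correctly.
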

\begin{proof}
    By the definition of the local hexagonal graph, the given sets form 
    complete subgraphs. Furthermore, they are represented as common 
    neighbourhoods of triangles or as closed neighbourhoods of vertices in the claimed way. Thus, they are also maximal.
    It remains to show that there cannot be any other cliques.
    
    If $v_{-6}^{2,2,2} \in C$, we get $C=\neig{LHG}{v_{-6}^{2,2,2}}$ since this neighbourhood already  forms a clique. Thus, the first case of the lemma holds.
    
    Otherwise, $C$ contains an element not incident to $v_{-6}^{2,2,2}$. Those elements are either given by $v_{-2}^{2\vec{e}}$ for an $\vec{e}\in\vec{E}$ or by $v_{0}^{\vec{e_2}-\vec{e_1}}$ for $\vec{e_1},\vec{e_2}\in\vec{E}$ with $\vec{e_1}\neq\vec{e_2}$.  
    
    If $v_{-2}^{2\vec{e}} \in C$, we get $C=\neig{LHG}{v_{-2}^{2\vec{e}}}$ since this neighbourhood already forms a clique. Thus, the second case of the lemma holds.  This neighbourhood is a clique.

    Finally, we assume $v_0^{\vec{e_2}-\vec{e_1}}\in C$, but $v_{-2}^{2\vec{e_2}}\notin C$ 
    (the other two vertices $v_{-2}^{2\vec{e}}$ with $\vec{e}\in\vec{E}$ are not adjacent to $v_0^{\vec{e_1}-\vec{e_2}}\in C$, anyway).
    For reasons of symmetry, we can choose $e_1=(1,0,0)$ and $e_2=(0,1,0)$. Thus, we have  $v_0^{-1,1,0} \in C$, but $v_{-2}^{0,2,0} \not\in C$.
    The set of neighbours of $v_0^{-1,1,0}$ is
    \begin{equation*}
        \left\{ v_0^{0,0,0}, v_0^{0,1,-1}, v_0^{-1,0,1}, v_{-2}^{1,1,0}, v_{-2}^{0,2,0}, v_{-4}^{1,2,1} \right\}.
    \end{equation*}
    Only $v_0^{-1,0,1}$ is not adjacent to $v_{-2}^{0,2,0}$, so it
    has to lie in $C$ since $C$ is maximal.
    Then, $C=\cneig{LHG}{v_0^{-1,1,0},v_0^{-1,0,1},v_0^{0,0,0}}=C_{-2}^{e_1}$, as
    described in the third case of the lemma.
\end{proof}

\noindent The following lemma describes how we can find the cliques of an induced 
subgraph using the cliques of the surrounding graph.

\begin{lem}\label{Lem_cliquesofinducedsubgraphs}
    For a graph $G$ and an induced subgraph 
    $H$,  every clique of $H$ is given as the intersections of a 
    (not necessarily unique) clique of $G$ with $H$.
\end{lem}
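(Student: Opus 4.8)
The plan is to start from a clique $C$ of $H$, regard it as a complete subgraph of $G$, enlarge it to a clique $D$ of $G$, and then verify that $D$ meets $H$ in exactly $C$. Throughout, I identify a clique with its vertex set, so that ``intersection with $H$'' means restricting the vertex set to $V(H)$.

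First I would exploit that $H$ is \emph{induced}: for vertices $x,y \in V(H)$, the edge $xy$ lies in $H$ if and only if it lies in $G$. Consequently the complete subgraph $C$ of $H$ is also complete as a subgraph of $G$. Next I extend $C$ to a maximal complete subgraph of $G$. Since the paper permits infinite graphs, I cannot simply add common neighbours finitely often; instead I note that the union of any chain of complete subgraphs of $G$ containing $C$ is again a complete subgraph (any two of its vertices already lie in a common member of the chain, hence are adjacent). Zorn's lemma then provides a maximal complete subgraph $D \supseteq C$, i.e.\ a clique of $G$.

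Finally I would check that $D \cap V(H) = C$. The inclusion $C \subseteq D \cap V(H)$ is immediate from $C \subseteq D$ and $C \subseteq V(H)$. For the reverse inclusion, take any $v \in D \cap V(H)$. As $v \in D$ and $D$ is complete, $v$ is adjacent in $G$ to every vertex of $C \subseteq D$; since $v$ and all of $C$ lie in the induced subgraph $H$, these adjacencies also hold in $H$. Thus $C \cup \{v\}$ is a complete subgraph of $H$, and the maximality of the clique $C$ in $H$ forces $v \in C$. This yields $D \cap V(H) \subseteq C$, completing the equality. The only genuine subtlety is the possibly infinite setting, which is precisely what Zorn's lemma absorbs; every other step is a direct consequence of $H$ being an induced subgraph and of the maximality built into the definition of a clique.
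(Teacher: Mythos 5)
Your proof is correct and follows essentially the same route as the paper: extend the clique $C$ of $H$ to a clique of $G$ and use the maximality of $C$ in the induced subgraph $H$ to show the intersection is exactly $C$. The only difference is that you make explicit, via Zorn's lemma, the existence of a maximal complete subgraph containing $C$ in the possibly infinite setting, a step the paper simply asserts.
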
	
\begin{proof}
    Let $C$ be a clique of $H$. Then, $C$ is a complete subgraph of $G$. 
    Therefore, there is at least one  clique $C_G$ of $G$ containing $C$. 
    Obviously, $C\subseteq C_G\cap H$. If there was an 
    $x\in C_G\cap H\setminus C$, the union $C\cup \{x\}$ were a complete subgraph 
    of $H$ since $H$ is an induced subgraph in contradiction to $C$ being chosen maximal.\qedhere
\end{proof}

We apply this to the image of a lower-level-neighbourhood under the 
embedding given in \ref{Lem_LowerLevelNeighbourhoodToLHG}. This way, we 
can classify all the cliques of $G_n$.

\begin{theo}\label{Thm_AllTheCliquesLarge}
    If $C$ is a clique of $G_n$ containing a vertex $\Delta_m$ with 
    $m\geq 3$, $C$ is given by the construction in 
    \ref{Lem_CliqueConstructionTriangle} or \ref{Lem_CliqueConstructionVertex}.
\end{theo}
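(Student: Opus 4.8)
The plan is to pick an element $S\in C$ of maximal level $m$; by hypothesis $m\geq 3$. Since every other vertex of $C$ is adjacent to $S$ and has level at most $m$, the whole clique lies in the induced subgraph $N_{G_n}[S]\cap G_m$, and $C$ is moreover a maximal clique of it. By Lemma \ref{Lem_LowerLevelNeighbourhoodToLHG} I transport the question into the local hexagonal graph: there is an isomorphism $\phi$ from $N_{G_n}[S]\cap G_m$ onto an induced subgraph $H$ of the local hexagonal graph with $\phi(S)=v_0^{0,0,0}$, so $\phi(C)$ is a maximal clique of $H$ containing $v_0^{0,0,0}$. Lemma \ref{Lem_cliquesofinducedsubgraphs} now yields a maximal clique $\tilde C$ of the local hexagonal graph with $\phi(C)=\tilde C\cap H$, and since $v_0^{0,0,0}\in\tilde C$, Lemma \ref{Lem_CliquesOfLocalHexagonalGraph} forces $\tilde C$ to be $C_{-6}$, some $C_{-4}^{\vec e}$, or some $C_{-2}^{\vec e}$. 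In each case I read off the three mutually adjacent generating triangles $g_1,g_2,g_3$ appearing in the common-neighbourhood description of $\tilde C$ and aim to prove $C=\cneig{G_n}{g_1,g_2,g_3}$, i.e.\ the clique of Lemma \ref{Lem_CliqueConstructionTriangle}.

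For $\tilde C=C_{-6}$ and $\tilde C=C_{-4}^{\vec e}$ the generators are the displayed $v_{-4}$-, respectively $v_{-2}$-vertices, whose $\phi$-preimages are triangular-shaped subgraphs lying strictly inside $S$. They therefore exist in $G$ and, being elements of $\tilde C\cap H=\phi(C)$, satisfy $g_1,g_2,g_3\in C$. A short computation with triangle inclusion maps (in the spirit of Lemma \ref{Lem_TriangleInclusionAdjacency}) shows that their union is a $\Delta_{m-3}$- respectively $\Delta_{m-1}$-shaped subgraph of $S$ whose three corner subtriangles are exactly the $g_i$. Since $C$ is a clique containing $g_1,g_2,g_3$ it is contained in the clique $\cneig{G_n}{g_1,g_2,g_3}$, and maximality forces equality; thus $C$ is of the claimed form.

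The delicate case is $\tilde C=C_{-2}^{\vec e}$, where the generators $v_0^{(1,0,0)-\vec e},v_0^{(0,1,0)-\vec e},v_0^{(0,0,1)-\vec e}$ are level-$m$ triangles: one of them is $S$ itself, but the other two are neighbouring $\Delta_m$-shaped subgraphs which a priori need not be present in $G$. Establishing that they are present is the main obstacle. I would settle it through the maximality of $\phi(C)$. By the coordinate-permutation symmetry I may take $\vec e=(1,0,0)$, so the neighbours are $v_0^{-1,1,0}$ and $v_0^{-1,0,1}$ and the central triangle is $v_{-2}^{0,1,1}$. Were $v_0^{-1,1,0}$ absent from $H$, then $\phi(C)\subseteq\{v_0^{0,0,0},v_0^{-1,0,1},v_{-2}^{0,1,1}\}$; but reading off $E_{LHG}$ shows that $v_{-4}^{1,1,2}$ is adjacent to each of these three vertices and lies in $H$ (for $m\geq 4$ it is a $\Delta_{m-4}$ inside $S$, and for $m=3$ it is one of the exceptional $\Delta_1$'s singled out in Lemma \ref{Lem_LowerLevelNeighbourhoodToLHG}), so $\phi(C)$ could be enlarged --- a contradiction; the symmetric argument using $v_{-4}^{1,2,1}$ excludes the absence of $v_0^{-1,0,1}$. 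Hence both neighbours lie in $H$ and $g_1,g_2,g_3\in C$. Now the extended chart of Lemma \ref{Lem_HexagonalChartExtension} is defined on $\Delta_m\cup(\Delta_m+(-1,1,0))\cup(\Delta_m+(-1,0,1))$, whose union is $\Delta_{m+1}$-shaped, so restricting $\hat\mu$ produces a standard chart of a $\Delta_{m+1}$-shaped subgraph of $G$ with the $g_i$ as corner subtriangles; maximality again gives $C=\cneig{G_n}{g_1,g_2,g_3}$. The larger triangles this common neighbourhood might otherwise contain would have level above $m$ and hence cannot appear in $C$, so no inconsistency with the maximality of $m$ arises. Finally, as $C$ contains the level-$m\geq 3$ vertex $S$ it is not one of the vertex-cliques of Lemma \ref{Lem_CliqueConstructionVertex}, so in every case $C$ is a clique constructed in Lemma \ref{Lem_CliqueConstructionTriangle}.
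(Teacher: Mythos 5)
Your overall strategy is exactly the paper's: pick an element $S$ of maximal level $m$, transport $N_{G_n}[S]\cap G_m$ into the local hexagonal graph, invoke Lemmas \ref{Lem_cliquesofinducedsubgraphs} and \ref{Lem_CliquesOfLocalHexagonalGraph}, and pull the three clique types back. Your treatment of $C_{-4}^{\vec e}$ is correct, and your handling of $C_{-2}^{\vec e}$ is a legitimate variant of the paper's: where the paper observes that if a neighbouring $\Delta_m$ is missing then $C_{-2}^{\vec e}\cap H$ is contained in some $C_{-4}^{\vec f}\cap H$ and redirects to that case, you instead use the always-present vertex $v_{-4}^{1,1,2}$ (adjacent in $LHG$ to $v_0^{0,0,0}$, $v_0^{-1,0,1}$ and $v_{-2}^{0,1,1}$, with preimage inside $S$ for every $m\geq 3$) to contradict maximality and conclude that both translated neighbours must exist. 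Both arguments work.

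The gap is in the case $\tilde C = C_{-6}$ with $m=3$. There the $\phi$-preimages of $v_{-4}^{2,1,1}$, $v_{-4}^{1,2,1}$, $v_{-4}^{1,1,2}$ are not triangle-inclusion images of some $\Delta_{m-4}$ but the three exceptional downward facets $\nabla_1^{\vec e}$ from Lemma \ref{Lem_TriangleInclusionTwo}; they pairwise intersect only in the central vertex $\mu(1,1,1)$ of $S$, so their union is a six-facet star rather than a $\Delta_{m-3}=\Delta_0$ having the $g_i$ as corner subtriangles, and your ``short computation'' breaks down. In this situation $C$ is the umbrella clique of $\mu(1,1,1)$ from Lemma \ref{Lem_CliqueConstructionVertex}, not a clique from Lemma \ref{Lem_CliqueConstructionTriangle}. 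This also falsifies your closing sentence: a vertex-clique \emph{does} contain level-$3$ elements whenever its central vertex has degree $6$ and $n\geq 3$ (namely the two $\Delta_3$'s centred at it, cf.\ case $k=2$ in the proof of Lemma \ref{Lem_CliqueConstructionVertex} and Corollary \ref{Cor_CliqueSummary}), and by the counting argument in Theorem \ref{Theo_bijective} such a clique, containing at least six $\Delta_1$'s, can never coincide with a triangle-clique. So the disjunct ``or \ref{Lem_CliqueConstructionVertex}'' in the statement is genuinely needed, and the subcase $m=3$, $\tilde C=C_{-6}$ must be treated separately, as the paper does.
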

\begin{proof}
    Let $C$ be a clique of $G_n$ and let $S\cong\Delta_m$ be a vertex with 
    maximal $m\geq 3$ of $C$. Thus, $C$ is contained in the lower level 
    neighbourhood of $S$. 
    The lower level neighbourhood of $S$ is isomorphic to an induced 
    subgraph $H$ of the local hexagonal graph containing $v_0^{0,0,0}$ and the 
    isomorphism $\mu$ maps $S$ to $v_0^{0,0,0}$.
     Thus, by Lemma \ref{Lem_cliquesofinducedsubgraphs}, 
     $C$ is isomorphic to the 
    intersection of $H$ with a clique $C_{LHG}$ of the local hexagonal graph.
    
    Thus, $C_{LHG}$ is one of the cliques given in Lemma 
    \ref{Lem_CliqueConstructionTriangle}. 
    For reasons of symmetry, we can restrict our investigation to the cliques 
    $C_{-6}, C_{-4}^{1,0,0},$ and $C_{-2}^{1,0,0}$. 
    \begin{enumerate}
        \item If $C_{LHG}=C_{-6}$ and $m\geq 4$, the preimages of 
            $v_{-4}^{2,1,1}, v_{-4}^{1,2,1},$ and $v_{-4}^{1,1,2}$ are 
            subgraphs of $S$ isomorphic to $\Delta_{m-4}$. Thus, they do 
            exist and $C$ is given by the construction of Lemma 
            \ref{Lem_CliqueConstructionTriangle}.
            If $m=3$, the preimages of  $v_{-4}^{2,1,1}, v_{-4}^{1,2,1},$ and 
            $v_{-4}^{1,1,2}$ do exist, but they are not contained in a common 
            $\Delta_2$ and we cannot apply Lemma \ref{Lem_CliqueConstructionTriangle}.
            Therefore, we look at the preimages of 
            $v_{-2}^{1,1,0}, v_{-2}^{0,1,1}, v_{-2}^{1,0,1}, v_{-4}^{2,1,1}, v_{-4}^{1,2,1},$ 
            and $v_{-4}^{1,1,2}$ which do exist, since they are induced 
            subgraphs of $S$ isomorphic to $\Delta_1$. Furthermore, those preimages
            are the subgraphs isomorphic to $\Delta_1$ containing the middle vertex of $S$. 
            Thus $C$ is constructed from this vertex by Lemma 
            \ref{Lem_CliqueConstructionVertex}.
        \item If $C_{LHG}=C_{-4}^{1,0,0}$, the preimages of 
            $v_{-2}^{(1,0,0)+\vec{e}}, v_{-2}^{(0,1,0)+\vec{e}}$ and 
            $v_{-2}^{(0,0,1)+\vec{e}}$ are subgraphs of $S$ isomorphic 
            to $\Delta_{m-2}$. Thus, they exist and $C$ is given 
            by the construction of Lemma \ref{Lem_CliqueConstructionTriangle}.
        \item If $C_{LHG}=C_{-2}^{1,0,0}$, either the preimages of 
            $v_0^{(1,0,0)-\vec{e}}, v_0^{(0,1,0)-\vec{e}}, v_0^{(0,0,1)-\vec{e}}$ exist 
            and $C$ is their common neighbourhood, or one of them does 
            not exist. In the second case, without loss of generality, 
            $\vec{e}=(1,0,0)$. Thus $v_0^{(1,0,0)-\vec{e}}=v_{0,0,0}$ and there is no
            preimage of $v_0^{(0,0,1)-\vec{e}}=v_0^{1,0,-1}$. The remaining elements 
            of $C_2^{\vec{e}}$ are at most $v_{0,0,0}, v_{-1,1,0}$ and 
            $v_{-1}^{0,1,1}$ which also lie in $C_{-4}^{0,1,0}$. Hence, we can 
            also see $C$ as the intersection of $LHG$ with $C_{-4}^{0,1,0}$ and, by 
            applying the second case, it is given by the construction of 
            Lemma \ref{Lem_CliqueConstructionTriangle}.\qedhere
    \end{enumerate}
\end{proof}

\noindent We finally managed to prove the surjectivity of the map from Remark \ref{lem_map} between the vertices of $G_{n+1}$ and the cliques of $G_{n}$. We continue proving the injectivity.

\begin{theo}\label{Theo_bijective}
    The map 
    \begin{align*}
        C\colon V(G_{n+1})&\to \{\text{cliques of }G_n\},\\
        S&\mapsto\text{the clique from}
        \begin{cases}
            \text{Lemma \ref{Lem_CliqueConstructionVertex}}, &\text{if $S$ is of level $0$},\\
            \text{Lemma \ref{Lem_CliqueConstructionTriangle}}, &\text{ otherwise,} 
        \end{cases}
    \end{align*}
    is bijective.
\end{theo}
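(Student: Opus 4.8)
The plan is to treat surjectivity and injectivity separately, using the clique classification already obtained. Surjectivity is essentially immediate. Every clique $C$ of $G_n$ contains an element $\Delta_{m^*}$ of maximal level $m^*$, and the classification splits according to $m^*$: if $m^*\geq 3$, Theorem~\ref{Thm_AllTheCliquesLarge} shows $C$ arises from the construction in Lemma~\ref{Lem_CliqueConstructionTriangle} or Lemma~\ref{Lem_CliqueConstructionVertex}; if every level is $0$ or $2$ (so $n$ even, $m^*\leq 2$), Lemma~\ref{Lem_EvenCliquesOfSmallLevel} gives the same conclusion; and if every level is $1$ (so $n$ odd, $m^*=1$), Lemma~\ref{Lem_OddCliquesofSmallLevel} does. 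Each such construction is $C(S)$ for exactly its generating object: the triangle construction from a chart $\Delta_{m+1}\iso{\mu}S'$ is $C(S')$ for the level-$(m{+}1)$ vertex $S'$ of $G_{n+1}$ (here $m+1\le n+1$ and $m+1\equiv_2 n+1$), and the umbrella construction at $v$ is $C(v)$ for the level-$0$ vertex $v$. Hence $C$ is surjective.

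For injectivity I will exhibit a left inverse, recovering $S$ from $C(S)$ by reading off two invariants: the minimal level $\ell_{\min}$ occurring in $C(S)$ and the number $\#_{\min}$ of elements of that level. Determining these from Lemmas~\ref{Lem_CliqueConstructionTriangle} and~\ref{Lem_CliqueConstructionVertex} is the technical heart. For a triangle vertex $S\cong\Delta_{m'}$ with $m'\geq 1$ (so $m=m'-1$ in Lemma~\ref{Lem_CliqueConstructionTriangle}), the small end of $C(S)$ is: for $m'\geq 3$ the single central triangle $\mu^{1,1,1}\cong\Delta_{m'-3}$, giving $(\ell_{\min},\#_{\min})=(m'-3,1)$; for $m'=2$ the three upward corners together with the central $\nabla_1$, giving $(1,4)$; and for $m'=1$ the three corner vertices, giving $(0,3)$. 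For a vertex $S=v$, the minimal level is $1$ and its elements are exactly the $\deg(v)$ umbrella triangles, so $\#_{\min}=\deg(v)\geq 6$ by $\delta=6$.

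These values invert $C$. Since $\#_{\min}\leq 4$ for every triangle-type clique while $\#_{\min}\geq 6$ for every vertex-type clique, $\#_{\min}$ detects the type — this separation is precisely where $\delta=6$ enters. In the vertex case I recover $v=\bigcap\{T\in C(S):\operatorname{level}(T)=1\}$, the apex common to the umbrella. In the triangle case the pair $(\ell_{\min},\#_{\min})$ pins down $m'$: the value $3$ forces $m'=1$, the value $4$ forces $m'=2$, and the value $1$ forces $m'=\ell_{\min}+3$; then $S=G\big[\,\bigcup\{V(T):T\in C(S),\ \operatorname{level}(T)=m'-1\}\,\big]$ is the induced subgraph on the union of the corner subgraphs. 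Here one uses that for $m'\geq 3$ the only level-$(m'-1)$ elements are the three corners (Lemma~\ref{Lem_CliqueConstructionTriangle} yields no further same-size triangle once $m\geq 2$), whereas for $m'\in\{1,2\}$ any extra small triangle already lies inside $S$ and cannot enlarge the union. As every step is a function of the clique alone, $C(S_1)=C(S_2)$ forces $S_1=S_2$.

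The main obstacle is the third paragraph's input: rigorously fixing $\ell_{\min}$ and $\#_{\min}$ for all $m'$, especially the exceptional cases $m'=1,2$ where the central $\nabla_1$ and the degenerate corner-vertices alter the count, and checking that no spurious lower-level element enters $C(S)$; I expect this to follow by re-reading the case analysis proving Lemma~\ref{Lem_CliqueConstructionTriangle} alongside Remark~\ref{Rem_TriangleInclusionBoundaryDistance}. A secondary subtlety is that the large end of $C(S)$ is truncated once levels would exceed $n$; the argument is immune to this, as it uses only the small end, which is present for every admissible $n$.
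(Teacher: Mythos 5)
Your proposal is correct and follows essentially the same route as the paper: surjectivity from the three classification results, and injectivity via the same counting dichotomy (at least six small elements in a vertex-type clique versus at most four in a triangle-type clique, which is exactly where $\delta=6$ enters in both arguments) followed by level bookkeeping to recover the generating vertex or triangle. The only cosmetic difference is that you package the recovery as an explicit left inverse built from $(\ell_{\min},\#_{\min})$, whereas the paper eliminates alternative generators case by case; the underlying facts used (the explicit clique description summarised in Corollary \ref{Cor_CliqueSummary}) are identical.
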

\begin{proof}
    The map $C$ is surjective by Lemma \ref{Lem_EvenCliquesOfSmallLevel}, Lemma \ref{Lem_OddCliquesofSmallLevel}, and Theorem \ref{Thm_AllTheCliquesLarge}.
    For the injectivity we discuss three cases.
    The cliques from Lemma \ref{Lem_CliqueConstructionVertex} contain at 
    least six $\Delta_1$-shaped subgraphs of $G$ and  the cliques from 
    Lemma \ref{Lem_CliqueConstructionTriangle} contain at most four of them. 
    Thus, a clique, which is constructed through Lemma 
    \ref{Lem_CliqueConstructionTriangle}, cannot be constructed through 
    Lemma \ref{Lem_CliqueConstructionVertex} and vice versa.
    
    Furthermore, for an $S\cong\Delta_0$, the umbrella in $C(S)$ (recall
    Subsection \ref{Subsect_CliqueConstruction}) has a 
    unique central vertex, which is the vertex of $S$, and the preimage 
    of $C(S)$ is unique. Finally, let $S\cong \Delta_m$ for an 
    $m\geq 1$. We look for a $T\cong\Delta_{m'}$ such that $C(S)=C(T)$. 
    To do this, we check the options for $T_1,T_2,T_3\cong\Delta_{m'-1}$ 
    like in Lemma \ref{Lem_CliqueConstructionTriangle}. By Corollary 
    \ref{Cor_CliqueSummary}, if $m\geq 3$, the only other three triangles 
    in $C(S)$ of a common level are the elements of level $m+1$, if all 
    three of them exist. But their union is not isomorphic to a 
    $\Delta_{m+2}$. For $m=2$, there is an additional element of level 
    $1$, but it does not form a $\Delta_2$ with two of the other three. 
    For $m=1$, there is an additional element of level $2$ , but it 
    does not form a $\Delta_3$ with two of the other three.
\end{proof}

 \noindent The following corollary gives an explicit description of the cliques.

\begin{cor}\label{Cor_CliqueSummary}  
    \begin{itemize}
        \item[1.] For $m\geq 1$ and $\Delta_{m}\iso{\mu}S\in V(G_{n+1})$,
           an explicit description of
            $C(S)$ is given through

            \resizebox{.9\hsize}{!}{$
                C(S)=
                \underbrace{M_{m-1}}_{\lvert \cdot \rvert=3}\cup 
                \underset{\lvert\cdot\rvert=0,\text{ if }n=m,}{\underbrace{M_{m+1}}_{\lvert\cdot \rvert\leq 3\phantom{,\text{ if }n=m,}}}
                \cup\underset{\lvert\cdot\rvert=0, \text{ if } n\leq m+2,}{\underbrace{M_{m+3}}_{\lvert\cdot\rvert\leq 1\phantom{, \text{ if } n\leq m+2,}}} \cup
                \begin{cases}
                    \emptyset, &\text{ if }m=1\text{ and }n\leq 1,\\
                    \{\hat\mu(\Delta_4^{-(1,1,1)}(\nabla_{2}))\}, &\text{ if }m=1\text{ and }n\geq 2,\\
                    \{\mu(\nabla_{1})\},  &\text{ if }m=2,\\
                    \{ S\setminus \partial S\}, &\text{ if }m\geq 3.
                \end{cases} $}

            \begin{itemize}
                \item $M_{m-1}$ consists of the elements 
                    $\Delta_{m-1}\cong\mu^{\vec{e}}$ for $\vec{e}\in \vec{E}$.
                \item $M_{m+1}$ consists of the elements $\Delta_{m+1}\iso{\nu}T$ 
                    fulfilling $\mu=\nu^{\vec{e}}$ for an  $\vec{e}\in \vec{E}$.
                \item $M_{m+3}$ consists of the element $\Delta_{m+3}\cong T$ 
                    enclosing $S$ with distance $1$, i.\,e. $S=T\setminus \partial T$.
            \end{itemize}
            \item[2.] For $\Delta_0\cong S\in V(G_{n+1})$, if we denote the vertex of $S$ by $v$, an explicit description of
            $C(S)$ is given through 

                    \resizebox{.9\hsize}{!}{$
                    C(S)=
                    \underbrace{\{T\in V(G_n)\mid T\cong \Delta_1,\ S\subseteq T\}}_{\lvert\cdot\rvert=\deg_G(v)}
                    \cup 
                    \underset{\lvert \cdot \rvert=2,\text{ if } \deg_G(v)=6 \text{ and } n\geq 3,}{\underbrace{\{T\in V(G_n)\mid T\cong \Delta_3,\ S\subseteq T\setminus \partial T\}.}_{\lvert \cdot\rvert=0,\text{ if } \deg_G(v)\geq 7 \text{ or } n\leq 2,}\phantom{a}}$}

    \end{itemize}
\end{cor}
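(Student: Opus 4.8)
The plan is to derive the corollary almost mechanically from the two construction lemmas, since Theorem \ref{Theo_bijective} already identifies $C(S)$ with an explicitly constructed clique: if $S$ has level $0$ then $C(S)$ is the clique of Lemma \ref{Lem_CliqueConstructionVertex}, and otherwise $C(S)=\cneig{G_n}{\mu^{1,0,0},\mu^{0,1,0},\mu^{0,0,1}}$ is the common neighbourhood of the three corner subtriangles $\mu^{\vec e}\cong\Delta_{m-1}$ of $S$ from Lemma \ref{Lem_CliqueConstructionTriangle}. Thus the whole task reduces to reading off that common neighbourhood level by level and repackaging the case analysis already carried out in the proof of Lemma \ref{Lem_CliqueConstructionTriangle}.

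First I would treat Part~1 for $m\geq 1$ by fixing a standard chart $\mu\colon\Delta_m\to S$ and re-running that case distinction with the three given triangles now sitting at level $m-1$ inside $S\cong\Delta_m$. The three corners themselves are exactly $M_{m-1}$, so $\lvert M_{m-1}\rvert=3$. The common neighbours of strictly larger level come from the subcases ``$k=2$'' and ``$k=4$'': by Lemma \ref{Lem_TriangleInclusionOne} every $\Delta_{m+1}\cong T$ containing $S$ has $S$ as one of its three corner subtriangles, i.e.\ $\mu=\nu^{\vec e}$, which is precisely the set $M_{m+1}$ (hence $\lvert M_{m+1}\rvert\leq 3$); by Remark \ref{Rem_TriangleInclusionBoundaryDistance} there is at most one $\Delta_{m+3}\cong T$ with $S=T\setminus\partial T$, giving $M_{m+3}$ with $\lvert M_{m+3}\rvert\leq 1$; and the ``$k=6$'' subcase produces nothing. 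The remaining (lower-level) common neighbour is the intersection $\mu^{1,1,1}$, and for $m\geq 3$ Remark \ref{Rem_TriangleInclusionBoundaryDistance} identifies it with $S\setminus\partial S\cong\Delta_{m-3}$, matching the last line of the braces.

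The delicate point is the degeneration of this central element for $m\in\{1,2\}$, where $\mu^{1,1,1}$ is no longer an upward triangle. For $m=2$ the intersection of the three corner facets is empty, but the exceptional branch of Lemma \ref{Lem_TriangleInclusionOne} supplies the central downward triangle $\mu(\nabla_1)$ as an extra level-$1$ neighbour. For $m=1$ the three corners are vertices, and here I would again invoke the exceptional branch of Lemma \ref{Lem_TriangleInclusionOne}: apart from the (up to three) upward $\Delta_2$ having $S$ as a corner, the three vertices of $S$ are the edge-midpoints of a single downward $\Delta_2$, namely $\hat\mu(\Delta_4^{-(1,1,1)}(\nabla_2))$, which is adjacent to all three vertices because it contains them, and which lies in $C(S)$ exactly when it occurs as a vertex of $G_n$, i.e.\ when $n\geq 2$. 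All emptiness conditions in the braces then follow from the single remark that an element of level $\ell$ can belong to $C(S)\subseteq V(G_n)$ only if $\ell\leq n$, which forces $M_{m+1}=\emptyset$ once $m+1>n$, $M_{m+3}=\emptyset$ once $m+3>n$, and removes the surrounding $\Delta_2$ for $m=1$ when $n\leq 1$.

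Finally, Part~2 is the same bookkeeping applied to Lemma \ref{Lem_CliqueConstructionVertex}: the facets through $v$ biject with the edges of the cycle $N_G(v)$, of which there are $\deg_G(v)$, and the case analysis of that lemma shows that the only higher-level common neighbours are the two $\Delta_3$ having $v$ as central vertex, which exist precisely when $\deg_G(v)=6$ (and $n\geq 3$, so that level $3$ is admissible in $G_n$). I expect the main obstacle to be the two exceptional configurations in Part~1 for $m=1,2$ --- in particular verifying that the surrounding downward $\Delta_2$ for $m=1$ is a genuine induced $\Delta_2$ of $G$ and that the chart $\hat\mu$ extending $\mu$ onto it is well defined, since Lemma \ref{Lem_HexagonalChartExtension} only guarantees chart extensions for $m\geq 3$ and one must instead use triangular simple connectivity directly to rule out self-overlap of the three facets adjacent to $S$.
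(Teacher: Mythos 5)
Your proposal is correct and follows exactly the route the paper intends: the corollary is stated without an explicit proof precisely because it is a level-by-level readout of the common neighbourhoods enumerated in the proofs of Lemmas \ref{Lem_CliqueConstructionTriangle} and \ref{Lem_CliqueConstructionVertex} (via Remark \ref{lem_map} and Theorem \ref{Theo_bijective}), with the $n$-dependent emptiness conditions coming from level admissibility in $G_n$ and the $m\in\{1,2\}$ branches from the exceptional cases of Lemma \ref{Lem_TriangleInclusionOne}. You also correctly flag the one genuine subtlety (existence and well-definedness of the surrounding downward $\Delta_2$ for $m=1$, which cannot be obtained from Lemma \ref{Lem_HexagonalChartExtension}), so nothing essential is missing.
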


\section{Clique intersections of $\mathemph{G_n}$}\label{Sect_CliqueIntersections}
After having constructed all cliques of the geometric clique graph 
$G_n$ (and proven that these
cliques correspond to vertices of $G_{n+1}$), we need to show that
two cliques $C(S_1)$ and $C(S_2)$ intersect if and only if the 
corresponding vertices $S_1$ and $S_2$
in $G_{n+1}$ are connected by an edge. From now on, we assume 
that $S_1\cong \Delta_m$ and $S_2\cong \Delta_{m+k}$ for  $k\in\{0,2,4,6\}$ and $m\geq 0$.

\subsection{Case: $\mathemph{S_2 \cong \Delta_m}$}

\begin{lem}\label{Lem_NoLargeOrSmallTriangles}
    For $S_1,S_2\in V(G_{n+1})$ with $S_1 \cong S_2 \cong \Delta_m$ for some $m\geq 0$,
    the cliques $C(S_1)$ and $C(S_2)$ do not intersect in 
    a vertex $T\cong\Delta_{m+3}$. Furthermore, if $m\geq 4$, 
    they do not intersect in an element $T\cong \Delta_{m-3}$.
\end{lem}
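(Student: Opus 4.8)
The plan is to read off, from the explicit clique description in Corollary \ref{Cor_CliqueSummary}, which vertices of $G_n$ can occur in $C(S)$ at a prescribed level, and then exploit that the extreme levels ($m+3$ above, $m-3$ below) are occupied by a single, rigidly determined element. Throughout I assume $S_1 \neq S_2$ (for $S_1 = S_2$ the cliques coincide and the claim is vacuous), and I aim to derive a contradiction from a common vertex $T$ of the stated level.

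\emph{Part 1 ($T \cong \Delta_{m+3}$).} By Corollary \ref{Cor_CliqueSummary}, every level-$(m+3)$ element $T$ of a clique $C(S)$ with $S \cong \Delta_m$ is a $\Delta_{m+3}$ that encloses $S$ with distance $1$; that is, $S = T \ohne \partial T$ (for $m = 0$ this reads: $S$ is the central vertex of the enclosing $\Delta_3$, using the description in the second part of the corollary). The distance-$1$ interior $T \ohne \partial T$ is determined by $T$ as a subgraph of $G$ and is isomorphic to $\Delta_m$ by Remark \ref{Rem_TriangleInclusionBoundaryDistance}. Hence a shared level-$(m+3)$ vertex $T \in C(S_1) \cap C(S_2)$ would force $S_1 = T \ohne \partial T = S_2$, contradicting $S_1 \neq S_2$. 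This handles Part 1 for every $m \geq 0$ with no topological input.

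\emph{Part 2 ($T \cong \Delta_{m-3}$, $m \geq 4$).} Now $m - 3 \geq 1$. By Corollary \ref{Cor_CliqueSummary} (case $m \geq 3$) the unique level-$(m-3)$ element of $C(S_i)$ is the central triangle $S_i \ohne \partial S_i \cong \Delta_{m-3}$, so a common vertex $T \cong \Delta_{m-3}$ satisfies $T = S_1 \ohne \partial S_1 = S_2 \ohne \partial S_2$. It therefore suffices to prove the reconstruction statement: a $\Delta_m$-subgraph of $G$ is determined by its distance-$1$ interior. To see this I would fix standard charts $\mu_i : \Delta_m \to S_i$; each restricts to an isomorphism of the interior $\Delta_m \ohne \partial \Delta_m$ (which is $\Delta_{m-3}$ up to the shift by $(1,1,1)$) onto $T$, i.e. to a chart of $T$. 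The two resulting charts of $T$ differ by an automorphism of $\Delta_{m-3}$, and every such automorphism is a coordinate permutation that extends to a symmetry of $\Delta_m$ (Subsection \ref{Subsect_Hexagonal}); post-composing $\mu_2$ with this symmetry, we may assume $\mu_1$ and $\mu_2$ agree on all interior vertices. Since $m - 3 \geq 1$ the interior contains a facet $f_1$, on which both monomorphisms now agree. Because $G$ is locally cyclic, the facet-path propagation principle discussed before Lemma \ref{Lem_NeighbourChart} forces the image of every facet reachable from $f_1$ by a facet-path; as any two facets of $\Delta_m$ are joined by a facet-path, $\mu_1$ and $\mu_2$ agree on every vertex, whence $S_1 = S_2$ — the desired contradiction.

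The hard part is exactly the reconstruction step in Part 2, i.e. that the distance-$1$ interior $T$ pins down the enclosing $\Delta_m$ inside the (merely locally cyclic, possibly exotic) graph $G$. This is precisely where the hypothesis $m \geq 4$ enters: it guarantees that the interior $T \cong \Delta_{m-3}$ is genuinely two-dimensional and hence contains a facet to seed the facet-path propagation. For $m = 3$ the interior degenerates to a single vertex, the argument breaks down, and indeed the two distinct $\Delta_3$ around a degree-$6$ vertex share their central $\Delta_0$ — so the statement really does fail at $m = 3$, confirming the restriction. The only routine care needed is to check that the two interior charts of $T$ differ by a symmetry that extends to $\Delta_m$, and that facet-paths in $\Delta_m$ reach every vertex from an interior facet.
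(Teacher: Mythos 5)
Your proof is correct and follows the same route as the paper: both parts reduce, via Corollary \ref{Cor_CliqueSummary}, to the observation that the unique extreme-level element of $C(S)$ pins down $S$, namely $S=T\setminus\partial T$ for level $m+3$ and $T=S\setminus\partial S$ for level $m-3$. The only difference is that the paper merely asserts that $T=S\setminus\partial S$ ``has only one solution $S$,'' whereas your chart-rigidity and facet-path-propagation argument supplies a valid justification for that reconstruction step (and your $m=3$ counterexample correctly explains the hypothesis $m\geq 4$).
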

\begin{proof}
    From Corollary \ref{Cor_CliqueSummary}, we see that if a clique 
    $C(S)$ contains an element $T\cong\Delta_{m+3}$, the clique is 
    uniquely defined by this element since $S=T\setminus \partial T$.
    Furthermore, for $m\geq 4$ the clique is also uniquely defined by 
    an element $T\cong \Delta_{m-3}$ since then $T=S\setminus\partial S$, which has only one solution $S$.
    In either way, the vertex $T$ cannot lie in two distinct cliques of $G_n$.
\end{proof}

\begin{lem}
    For $S_1,S_2\in V(G_{n+1})$ with $S_1\cong S_2\cong \Delta_{0}$, 
    the cliques $C(S_1)$ and $C(S_2)$ intersect non-trivially if and 
    only if $S_1$ and $S_2$ are adjacent in $G_{n+1}$, i.\,e.\ they 
    are adjacent in $G$.
\end{lem}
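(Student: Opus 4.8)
The plan is to read off the explicit shape of the two cliques from Corollary~\ref{Cor_CliqueSummary} and compare them. Write $S_1=\{v_1\}$ and $S_2=\{v_2\}$ for the two vertices of $G$ these level-$0$ vertices represent; since $S_1$ and $S_2$ are distinct vertices of $G_{n+1}$, we have $v_1\neq v_2$. Part~2 of the corollary says each $C(S_i)$ consists of all facets $T\cong\Delta_1$ of $G$ with $v_i\in T$, together with (only when $\deg_G(v_i)=6$ and $n\geq 3$) the two triangles $T\cong\Delta_3$ having $v_i$ as their central vertex, i.e.\ $v_i\in T\setminus\partial T$. I also record that, for level-$0$ vertices, adjacency in $G_{n+1}$ (Definition~\ref{Def_theCliqueGraph}, case~1) means $v_1\in N_G[v_2]$, which for $v_1\neq v_2$ is exactly $v_1v_2\in E(G)$.

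For the implication $v_1v_2\in E(G)\Rightarrow C(S_1)\cap C(S_2)\neq\emptyset$, I would use that $G$ is locally cyclic, so the edge $v_1v_2$ lies in a facet $\{v_1,v_2,w\}$. This facet is a subgraph $T\cong\Delta_1$ and hence a vertex of $G_n$ (note $n$ is odd because level $0$ occurs in $G_{n+1}$, so level $1$ occurs in $G_n$). Since $T$ contains both $v_1$ and $v_2$, it lies in the $\Delta_1$-part of $C(S_1)$ and of $C(S_2)$ simultaneously, giving $T\in C(S_1)\cap C(S_2)$.

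For the converse, I take a common element $T\in C(S_1)\cap C(S_2)$ and split on its level, which is either $1$ or $3$. If $T\cong\Delta_1$, then $T$ contains both $v_1$ and $v_2$; as $T$ is a triangle and $v_1\neq v_2$, these vertices are adjacent in $T$, so $v_1v_2\in E(G)$. If $T\cong\Delta_3$, then membership in both cliques forces $v_1,v_2\in T\setminus\partial T$; but by Remark~\ref{Rem_TriangleInclusionBoundaryDistance} the interior $T\setminus\partial T\cong\Delta_0$ is a single vertex, so $v_1=v_2$, contradicting $v_1\neq v_2$. Thus only the $\Delta_1$ case can occur, and a nonempty intersection forces $v_1v_2\in E(G)$.

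I do not anticipate a serious obstacle: the statement is essentially a direct consequence of the clique description in Corollary~\ref{Cor_CliqueSummary}. The one point requiring care is the $\Delta_3$ subcase in the forward direction, where one must invoke the uniqueness of the central vertex of a $\Delta_3$ to exclude a shared $\Delta_3$; and the small bookkeeping that two \emph{distinct} level-$0$ vertices of $G_{n+1}$ correspond to two distinct vertices of $G$, so that ``adjacent in $G$'' genuinely means $v_1v_2\in E(G)$ and not $v_1=v_2$.
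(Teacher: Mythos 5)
Your proof is correct and follows essentially the same route as the paper: the forward direction is identical (local cyclicity gives a facet containing the edge, which lies in both cliques), and your exclusion of a shared $\Delta_3$ via the uniqueness of its central vertex is precisely the content of the paper's Lemma~\ref{Lem_NoLargeOrSmallTriangles} specialised to $m=0$, which the paper cites instead of re-deriving.
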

\begin{proof}
    At first, we suppose that $S_1$ and $S_2$ are adjacent in $G$. Since $G$ is locally cyclic, they have two common neighbours, there is a 
    $\Delta_1 \cong T \subset G$ with $S_1 \subset T$
    and $S_2 \subset T$. Thus, $T$ lies in both $C(S_1)$ and $C(S_2)$. 
    Conversely, suppose there is a $T\in C(S_1)\cap C(S_2)$. By 
    Lemma \ref{Lem_NoLargeOrSmallTriangles}, $T\cong \Delta_{1}$. 
    Furthermore, 
    by Corollary \ref{Cor_CliqueSummary}, $S_1$ and $S_2$ are both 
    vertices of $T$ and thus adjacent.
\end{proof}

\begin{lem}\label{Lem Fromm+1tom-1} 
    For $S_1,S_2\in V(G_{n+1})$ with $S_1 \cong S_2 \cong \Delta_m$ for some $m \geq 1$,
    if the cliques $C(S_1)$ and $C(S_2)$ intersect 
    in a $T_1\cong\Delta_{m+1}$, they also intersect in a 
    $T_2\cong\Delta_{m-1}$.
\end{lem}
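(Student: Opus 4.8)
The plan is to read off, from Corollary~\ref{Cor_CliqueSummary}, how the common element $T_1$ can sit inside each of the two cliques, and then to exhibit a common lower-level element geometrically. If $S_1=S_2$ the claim is immediate, since $M_{m-1}\subseteq C(S_1)=C(S_2)$ already supplies a $\Delta_{m-1}$; so I assume $S_1\neq S_2$. Because $T_1\cong\Delta_{m+1}$ is a common element of level $m+1$, Corollary~\ref{Cor_CliqueSummary} pins down which summand of $C(S_i)$ it can lie in. For $m\geq 2$ the only elements of $C(S_i)$ of level $m+1$ are those of $M_{m+1}$, so $T_1\in M_{m+1}(S_1)\cap M_{m+1}(S_2)$, and by the very definition of $M_{m+1}$ this means that each $S_i$ is a corner sub-$\Delta_m$ of $T_1$.

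In the generic case $m\geq 2$, I fix a standard chart $\nu\colon\Delta_{m+1}\to T_1$, so the three corner sub-$\Delta_m$'s of $T_1$ are $\nu^{1,0,0},\nu^{0,1,0},\nu^{0,0,1}$. Using the coordinate-permutation symmetry (Subsection~\ref{Subsect_Hexagonal}) I may assume $S_1=\nu^{1,0,0}$ and $S_2=\nu^{0,1,0}$, which are distinct as $S_1\neq S_2$. The induced standard charts are $\mu_1=\nu\circ\Delta_m^{1,0,0}$ and $\mu_2=\nu\circ\Delta_m^{0,1,0}$, and the key observation is the composition identity: $\mu_1^{0,1,0}$ and $\mu_2^{1,0,0}$ are both the image of $\nu\circ\Delta_{m-1}^{1,1,0}$. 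Calling this common image $T_2$, it is a $\Delta_{m-1}$-shaped subgraph that is simultaneously a corner sub-$\Delta_{m-1}$ of $S_1$ (namely $\mu_1^{0,1,0}\in M_{m-1}(S_1)$) and of $S_2$ (namely $\mu_2^{1,0,0}\in M_{m-1}(S_2)$). Since $M_{m-1}(S_i)\subseteq C(S_i)$, this gives $T_2\in C(S_1)\cap C(S_2)$ with $T_2\cong\Delta_{m-1}$, as required; its level and parity are automatically admissible because it already lies in the clique $C(S_1)$ of $G_n$.

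The remaining case $m=1$ is where I expect the only genuine subtlety, because $T_1\cong\Delta_2$ carries an exceptional sub-$\Delta_1$: by Lemma~\ref{Lem_TriangleInclusionOne} it has a central downward triangle $\nabla_1$ in addition to the three corners, and correspondingly Corollary~\ref{Cor_CliqueSummary} lists, besides $M_2$, a special ``face-down'' $\Delta_2$ among the level-$2$ elements of $C(S_i)$. Hence here $S_i$ need not be a corner of $T_1$, so the argument above does not apply verbatim. The uniform resolution is to note that in either case $T_1\in C(S_i)$ forces $S_i$ to be one of the four faces of $T_1\cong\Delta_2$: if $T_1\in M_2(S_i)$ then $S_i$ is a corner, while if $T_1$ is the special face-down element then $S_i$ is its central $\nabla_1$. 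Thus $S_1$ and $S_2$ are two distinct faces of $\Delta_2$, and any two distinct faces of $\Delta_2$ share at least one vertex. That shared vertex is a $\Delta_0$ lying in $M_0(S_1)\cap M_0(S_2)$ --- recall that for $S_i\cong\Delta_1$ the set $M_0(S_i)$ is exactly the three vertices of $S_i$ --- hence in $C(S_1)\cap C(S_2)$, giving the required $T_2\cong\Delta_0=\Delta_{m-1}$. The one computation to double-check here is the elementary combinatorial fact that distinct faces of $\Delta_2$ always meet in a vertex, which one reads off directly from the coordinates of the four faces.
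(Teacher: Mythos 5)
Your proposal is correct and follows essentially the same route as the paper's proof: identify the common $\Delta_{m+1}$ as one in which both $S_i$ sit as faces, observe that two distinct corner sub-$\Delta_m$'s of a $\Delta_{m+1}$ intersect in a $\Delta_{m-1}$ lying in $M_{m-1}(S_1)\cap M_{m-1}(S_2)$, and treat $m=1$ separately because of the exceptional central $\nabla_1$ of $\Delta_2$, where distinct faces still share a vertex. Your version is somewhat more explicit than the paper's (the composition identity $\mu_1^{0,1,0}=\mu_2^{1,0,0}=\nu\circ\Delta_{m-1}^{1,1,0}$ and the appeal to Corollary \ref{Cor_CliqueSummary} to pin down how $T_1$ sits in each clique are left implicit there), but the substance is identical.
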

\begin{proof}
    If $T\in C(S_1)\cap C(S_2)$ for a $\Delta_{m+1}\iso{\nu} T$, 
    both $S_1\subseteq T$ and $S_2\subseteq T$. If $m+1\geq 3$, it follows that $S_1=\nu^{\vec{e}}$ and $S_2=\nu^{\vec{f}}$ for 
    $\vec{e},\vec{f}\in \vec{E}$ and $\vec{e}\neq\vec{f}$, implying 
    $S_1 \cap S_2 \cong \Delta_{m-1}$.
    
    If $m+1=2$, either the situation is the same as in the foregoing 
    case or, without loss of generality, $S_1=\mu^{\vec{e}}$ for 
    an $\vec{e}\in \vec{E}$ and $S_2=\mu(\nabla_{1})$.  
    Even in this case, $S_1$ and $S_2$  intersect in a vertex.
\end{proof}

\noindent Consequently, if $m \geq 1$, we only have to investigate whether two cliques intersect in a  $\Delta_{m-1}$-shaped vertex of $G_n$. 

\begin{lem}
    For $S_1,S_2\in V(G_{n+1})$ with $S_1\cong S_2\cong \Delta_{m}$ 
    for an $m\geq 1$, the cliques $C(S_1)$ and $C(S_2)$ intersect 
    non-trivially if and only if $S_1$ and $S_2$ are adjacent in 
    $G_{n+1}$, i.\,e. $S_1\subseteq \neig{G}{S_2}$.
\end{lem}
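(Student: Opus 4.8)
The plan is to prove both implications by reducing, via Corollary~\ref{Cor_CliqueSummary}, to the single question of when $C(S_1)$ and $C(S_2)$ share a vertex of level $m-1$. By Corollary~\ref{Cor_CliqueSummary} every element of a clique $C(S)$ has level in $\{m-3,m-1,m+1,m+3\}$, where the exceptional central pieces for small $m$ fit this list (the central element has level $m+1$ for $m=1$, level $m-1$ for $m=2$, and level $m-3$ for $m\geq 3$). Lemma~\ref{Lem_NoLargeOrSmallTriangles} forbids a common vertex of level $m+3$ (always) and of level $m-3$ (for $m\geq 4$), and Lemma~\ref{Lem Fromm+1tom-1} shows that a common vertex of level $m+1$ forces a common vertex of level $m-1$. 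Hence, apart from the single configuration of a common level-$0$ vertex when $m=3$, a nonempty intersection is equivalent to $C(S_1)$ and $C(S_2)$ sharing a common $R\cong\Delta_{m-1}$.

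For the direction ``intersection implies adjacency'', suppose $C(S_1)\cap C(S_2)$ contains such an $R\cong\Delta_{m-1}$. Inspecting Corollary~\ref{Cor_CliqueSummary}, every level-$(m-1)$ element of a clique $C(S)$ is a subgraph of $S$: one of the three corner sub-triangles $\mu^{\vec{e}}$ (a single vertex when $m=1$), or, when $m=2$, the central $\nabla_1$. In all these cases $S$ lies in the closed neighbourhood of that sub-triangle, since the complement of a corner sub-triangle, resp. of the central $\nabla_1$, inside $\Delta_m$ consists of vertices at distance $1$ from it, and these distances are realised by edges of $S\subseteq G$. Applying this to $S_1$ and using $R\subseteq S_2$ yields $S_1\subseteq \neig{G}{R}\subseteq \neig{G}{S_2}$, so $S_1$ and $S_2$ are adjacent in $G_{n+1}$. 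In the remaining configuration, a common level-$0$ vertex $v$ at $m=3$, we have $v=S_i\setminus\partial S_i$, the centre of each $S_i\cong\Delta_3$; since the centre of a $\Delta_3$ has its whole link inside the triangle, $\deg_G(v)=6$, there are exactly two such triangles (the ``upward'' and ``downward'' one), and each lies in the closed neighbourhood of the other, so $S_1$ and $S_2$ are again adjacent.

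For the converse, assume $S_1\subseteq \neig{G}{S_2}$. When $m\geq 3$, Lemma~\ref{Lem_HexagonalChartExtension} extends the standard chart of $S_2$ to a hexagonal chart containing $S_1$, so that by Lemma~\ref{Lem_NeighbourChart} either $S_1=\hat{\mu}(\Delta_m^{\vec{d}})$ for some $\vec{d}\in\vec{D}_0$, or, when $m=3$, $S_1$ is the rotated copy $\hat{\mu}(\nabla_3)$. In the translate case an explicit computation in the hexagonal grid exhibits a shared corner sub-triangle: for $\vec{d}=(1,-1,0)$ the corner sub-triangle of $S_2$ near $(m,0,0)$ and the corner sub-triangle of $S_1$ near $(1,m-1,0)$ both equal $\{(x,y,z)\mid x\geq 1,\ y,z\geq 0,\ x+y+z=m\}$, giving a common $\Delta_{m-1}$; since the symmetry group of Subsection~\ref{Subsect_Hexagonal} permutes the six elements of $\vec{D}_0$ transitively, the same holds for every translate. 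In the $m=3$ rotation case $S_1$ and $S_2$ share their central vertex, so again $C(S_1)\cap C(S_2)\neq\emptyset$.

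It remains to treat $m\in\{1,2\}$, where Lemma~\ref{Lem_HexagonalChartExtension} is not available. Here I would argue directly from the local structure: there are only finitely many $\Delta_m$ inside $\neig{G}{S_2}$, and Lemma~\ref{lem_topology} together with $\delta=6$ pins them down. For $m=1$ one shows two adjacent facets must share a vertex (a disjoint facet lying in the open neighbourhood of $S_2$ would force a forbidden short cycle in some link), producing a common level-$0$ vertex; for $m=2$ one checks that the admissible configurations produce a common corner $\Delta_1$ or the central $\nabla_1$. I expect the main obstacle to be precisely this small-$m$ analysis together with the bookkeeping of the exceptional central cliques (the level-$0$ coincidence at $m=3$ and the central $\nabla_1,\nabla_2$ at $m=2,1$), since these lie outside the clean chart-extension machinery that drives the generic case; the hexagonal-grid computation underlying the translate case, though routine, must still be carried out for each coordinate direction.
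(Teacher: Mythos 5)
Your proposal is correct and follows essentially the same route as the paper: reduce via Lemma \ref{Lem_NoLargeOrSmallTriangles} and Lemma \ref{Lem Fromm+1tom-1} to a shared $\Delta_{m-1}$, deduce $S_1\subseteq \neig{G}{R}\subseteq\neig{G}{S_2}$ for the forward direction, and for the converse use the chart extension of Lemma \ref{Lem_HexagonalChartExtension} when $m\geq 3$ (with the rotated $\nabla_3$ case sharing the central vertex) and a direct local argument for $m\in\{1,2\}$. If anything, you are slightly more careful than the paper in the forward direction, where you explicitly dispose of the possible common level-$0$ element when $m=3$, which the paper's appeal to Lemma \ref{Lem_NoLargeOrSmallTriangles} (stated only for $m\geq 4$ at level $m-3$) leaves implicit.
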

\begin{proof}
    If there is a $T\in C(S_1)\cap C(S_2)$, by Lemma 
    \ref{Lem_NoLargeOrSmallTriangles} and Lemma \ref{Lem Fromm+1tom-1}, 
    we can choose $T \cong \Delta_{m-1}$. Thus, we have 
    $S_1 \subset\neig{G}{T} \subset \neig{G}{S_2}$, 
    where $S_1 \subset \neig{G}{T}$ follows from Lemma \ref{Lem_TriangleInclusionOne}.
	
    \noindent Conversely, suppose $S_1 \subset \neig{G}{S_2}$. We distinguish between the values of $m$.
	
    \begin{itemize}
	\item If $m=1$, $S_1$ is one of the additional faces
	    in $\neig{G}{S_2}$. Thus $S_1$ and $S_2$ intersect in at least one 
            vertex, which lies in both $C(S_1)$ and $C(S_2)$.
	\item If $m=2$, there is a $\Delta_1\cong T\subseteq S_1 \cap S_2$. 
            Thus, $T\in C(S_1)\cap C(S_2)$.
	\item If $m\geq 4$, let $\mu\colon \Delta_{m}\to S_1$ be a standard 
            chart. By Lemma \ref{Lem_HexagonalChartExtension}, there is an 
            extension $\hat{\mu}\colon E\to \hat{S}$ such that $S_2$ is the image 
            of $\hat{\mu}\circ \Delta_m^{\vec{t}}$ for a $\vec{t}\in \vec{D}_0$. 
            Therefore, 
            $S_1\cap S_2\cong \hat{\mu}^{-1}(S_1\cap S_2)=\Delta_m\cap \Delta_m^{\vec{t}}(\Delta_m)\cong \Delta_{m-1}$. 
            Thus, by definition of the cliques, 
            $S_1\cap S_2\in C(S_1)\cap C(S_2)$, so they intersect non-trivially. 
	\item If $m=3$, by Lemma \ref{Lem_HexagonalChartExtension}, we are either in the same situation as for $m\geq 4$, which proves the claim, 
            or $S_2$ lies twisted in the middle of $\neig{G}{S_1}$. In this case, 
            $C(S_1)$ and $C(S_2)$ share the vertex equivalent to the midpoint 
            of both $S_1$ and $S_2$. \qedhere
    \end{itemize}
\end{proof}
\noindent Thus, for $S_1\cong S_2\in V(G_{n+1})$, intersection in $G_n$ and adjacency in $G_{n+1}$ are equivalent.

\subsection{Case: $\mathemph{S_2\cong\Delta_{m+k}}$ for $\mathemph{k\in \{2,4,6\}}$}

\begin{lem}
    For $S_1,S_2\in V(G_{n+1})$ with $S_1\cong \Delta_{m}$ and 
    $S_2\cong \Delta_{m+2}$ for an $m\geq 0$, the cliques $C(S_1)$ 
    and $C(S_2)$ intersect non-trivially if and only if $S_1$ 
    and $S_2$ are adjacent in $G_{n+1}$, i.\,e. $S_1\subseteq S_2$.
\end{lem}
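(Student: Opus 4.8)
The plan is to prove both directions from the explicit clique descriptions in Corollary~\ref{Cor_CliqueSummary}. Since $S_1\cong\Delta_m$ and $S_2\cong\Delta_{m+2}$, every vertex of $C(S_1)$ has level in $\{m-3,m-1,m+1,m+3\}$ and every vertex of $C(S_2)$ has level in $\{m-1,m+1,m+3,m+5\}$ (with the outermost $M$-sets possibly empty, and the level $m-3$ replaced by a low-level special element when $m\in\{1,2\}$). Hence a common vertex $T\in C(S_1)\cap C(S_2)$ can only have level $m-1$, $m+1$ or $m+3$, and I organise the reverse implication along these three levels; the forward implication I settle by producing one explicit common vertex.

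\textit{Forward direction.} Assume $S_1\subseteq S_2$. For $m\geq 1$, Lemma~\ref{Lem_TriangleInclusionTwo} describes how $\Delta_m\cong S_1$ sits inside $\Delta_{m+2}\cong S_2$. In the generic case $S_1$ is the image of $\Delta_m^{\vec f}$ with $\vec f=\vec e_i+\vec e_j$ (indices possibly equal); then the corner sub-triangle $T\cong\Delta_{m+1}$ of $S_2$ at corner $\vec e_i$ satisfies $S_1\subseteq T\subseteq S_2$ and contains $S_1$ as a corner, so $T$ is a level-$(m+1)$ member of both $C(S_1)$ and $C(S_2)$. The two exceptional inclusions are absorbed by the special clique elements: for $m=1$ the down-triangle $S_1=\nabla_1^{\vec e}$ is the central $\nabla_1$ of a corner $\Delta_2$ of $S_2$, which is the special element of $C(S_1)$ and a level-$(m+1)$ element of $C(S_2)$; for $m=2$ the central $\Delta_1$ of $S_2$ equals the central $\nabla_1$ of $S_1=\nabla_2$, a common special element. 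The base case $m=0$ is direct: $S_1=\{v\}$ is a vertex of $S_2\cong\Delta_2$, every vertex of $\Delta_2$ lies in some $\Delta_1\subseteq\Delta_2$, and any such triangle lies both in the umbrella clique $C(S_1)$ and in $C(S_2)$.

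\textit{Reverse direction, levels $m+1$ and $m+3$.} Let $T\in C(S_1)\cap C(S_2)$. If $T$ has level $m+1$, membership in $C(S_1)$ forces $S_1\subseteq T$ (whether $S_1$ is a corner of $T$ or, for $m=1$, its central down-triangle) and membership in $C(S_2)$ forces $T\subseteq S_2$, so $S_1\subseteq S_2$ immediately. If $T$ has level $m+3$, then $S_2$ is a corner sub-$\Delta_{m+2}$ of $T$ while $S_1=T\setminus\partial T$ is the central $\Delta_m$ of $T$ (Remark~\ref{Rem_TriangleInclusionBoundaryDistance}); as $m+3\geq 3$, Lemma~\ref{Lem_TriangleInclusionOne} guarantees $S_2$ is a standard corner, and the central triangle (all three coordinates $\geq 1$ in a chart of $T$) is contained in every corner sub-triangle, giving $S_1\subseteq S_2$.

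\textit{Reverse direction, the level $m-1$ obstacle.} The delicate case is level $m-1$: here $T\subseteq S_1$, and from $C(S_2)$ we only learn $T=S_2\setminus\partial S_2$, the \emph{interior} of $S_2$, so neither inclusion points outward and $S_1\subseteq S_2$ is not formal. The key observation is that every vertex $u\in T$ is an interior vertex of $S_2$, whence its six $S_2$-neighbours already close up into a complete umbrella; since $G$ is locally cyclic with $\delta=6$, the cycle $N_G(u)$ contains this closed $6$-cycle and must therefore equal it, forcing $\deg_G(u)=6$ and $N_G(u)\subseteq S_2$. Hence $\neig{G}{T}\subseteq S_2$. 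On the other hand $\Delta_{m-1}\cong T\subseteq\Delta_m\cong S_1$ yields $S_1\subseteq\neig{G}{T}$ by Lemma~\ref{Lem_TriangleInclusionOne}, so $S_1\subseteq\neig{G}{T}\subseteq S_2$. This single argument also covers the small-$m$ subcases uniformly (for instance $T=\nabla_1\subseteq S_1=\Delta_2$, or $T=v$ the centre of $S_2=\Delta_3$). Combining the three level cases finishes the reverse implication and the lemma.
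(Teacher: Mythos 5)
Your proof is correct and follows essentially the same route as the paper's: for the forward direction you exhibit an explicit common element (a corner $\Delta_{m+1}$ in the generic case, the relevant special clique element in the exceptional inclusions of Lemma \ref{Lem_TriangleInclusionTwo}), and for the reverse direction you split according to the level $m-1$, $m+1$, or $m+3$ of the common element, exactly as the paper does. The only substantive difference is that in the level-$(m-1)$ case you justify, via the degree-$6$ umbrella argument, why $N_G\left[S_2\setminus\partial S_2\right]\subseteq S_2$ and hence $S_1\subseteq N_G\left[S_2\setminus\partial S_2\right]\subseteq S_2$ --- a containment the paper asserts in one line without further comment.
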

\begin{proof}
    At first, we suppose that $S_1\subseteq S_2$.
    By Lemma \ref{Lem_TriangleInclusionTwo}, if $m\neq 2$ and by choosing a chart $\Delta_{m+2}\iso{\mu} S_2$, we get $S_1=\mu^{\vec{e}+\vec{f}}$ for some  $\vec{e},\vec{f}\in \vec{E}$ and
    $T\isdef\mu^{\vec{e}}$  fulfils $S_1\subseteq T\subseteq S_2$. By 
    Corollary \ref{Cor_CliqueSummary}, $T$ lies in both cliques $C(S_1)$ and $C(S_2)$.

    If $m=2$, the triangle $S_1$ either lies inside a 
    $T\subseteq S_2$ which is isomorphic to $\Delta_3$ or $S_1$ 
    contains the unique $S\cong \Delta_1$, which has distance 
    $1$ to $\partial S_2$. In both cases, $S$ or $T$ respectively 
    lies in both $C(S_1)$ and $C(S_2)$.
	
    Conversely, we now suppose that $C(S_1)$ and $C(S_2)$ intersect 
    non-trivially.
    We distinguish between the possibilities for the element in the 
    intersection. Any element $T\in C(S_1)\cap C(S_2)$ is isomorphic 
    to $\Delta_{m-1},\ \Delta_{m+1},$ or $\Delta_{m+3}$. 
    \begin{itemize}
	\item If $T\cong \Delta_{m-1}$ (i.\,e. $m\geq 1$), $T$ has 
            distance $1$ to the boundary of $S_2$; thus 
            $T=S_2\setminus\partial S_2$. All the graphs isomorphic 
            to $\Delta_{m}$, which contain $T$, are subgraphs of 
            $S_2$; thus $S_1\subseteq S_2$.
	\item If $T\cong \Delta_{m+1}$, by Corollary 
            \ref{Cor_CliqueSummary}, this means 
            $S_1\subseteq T\subseteq S_2$, which proves the claim. 
	\item If $T\cong \Delta_{m+3}$, $S_1$ is the unique subgraph 
            of $T$ with distance $1$ from the boundary, i.\,e. 
            $S_1=T\setminus\partial T$. This subgraph is contained in every subgraph 
            of $T$ isomorphic to $\Delta_{m+2}$; thus $S_1\subseteq S_2$. \qedhere
    \end{itemize}	
\end{proof}	

\begin{lem}
    For $S_1,S_2\in V(G_{n+1})$ with $S_1\cong \Delta_{m}$ and 
    $S_2\cong \Delta_{m+4}$ for an $m\geq 0$, the cliques $C(S_1)$ and 
    $C(S_2)$ intersect non-trivially if and only if $S_1$ and $S_2$ 
    are adjacent in $G_{n+1}$, i.\,e. $S_1\subseteq S_2\setminus\partial S_2$.
\end{lem}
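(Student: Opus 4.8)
The plan is to mirror the structure of the preceding $k=2$ case, relying on the explicit clique description of Corollary \ref{Cor_CliqueSummary}. The key preliminary step is a level count: for $S_1\cong\Delta_m$ the clique $C(S_1)$ only contains vertices of levels $m-3,m-1,m+1,m+3$ (with the lower ones truncated for small $m$), whereas for $S_2\cong\Delta_{m+4}$ the clique $C(S_2)$ only contains vertices of levels $m+1,m+3,m+5,m+7$. Intersecting these level sets in every case $m\geq 0$ leaves only $m+1$ and $m+3$, so I would first record that any common vertex $T\in C(S_1)\cap C(S_2)$ satisfies $T\cong\Delta_{m+1}$ or $T\cong\Delta_{m+3}$.

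For the direction ``adjacency $\Rightarrow$ intersection'', suppose $S_1\subseteq S_2\setminus\partial S_2$. By Remark \ref{Rem_TriangleInclusionBoundaryDistance} the graph $T\isdef S_2\setminus\partial S_2$ is isomorphic to $\Delta_{m+1}$, and by Corollary \ref{Cor_CliqueSummary} it is the unique level-$(m+1)$ vertex of $C(S_2)$ (its special element). It remains to see $T\in C(S_1)$. Since $\Delta_m\cong S_1\subseteq T\cong\Delta_{m+1}$, Lemma \ref{Lem_TriangleInclusionOne} shows that $S_1$ is either one of the three corner subtriangles of $T$ or, in the exceptional case $m=1$, the central triangle $\nabla_1$ of $T$. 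In the first case $T\in M_{m+1}\subseteq C(S_1)$; in the second, $T$ is precisely the $\Delta_2$-shaped special element of $C(S_1)$ from Corollary \ref{Cor_CliqueSummary}, namely the unique $\Delta_2$ having $S_1$ as its central triangle. Either way $T\in C(S_1)\cap C(S_2)$.

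For the converse I take $T\in C(S_1)\cap C(S_2)$ and split along the two admissible levels. If $T\cong\Delta_{m+1}$, then membership in $C(S_2)$ forces $T=S_2\setminus\partial S_2$, while membership in $C(S_1)$ forces $S_1\subseteq T$, since every level-$(m+1)$ vertex of $C(S_1)$ contains $S_1$ (being either a $\Delta_{m+1}$ with $S_1$ as a corner, or, for $m=1$, the special $\Delta_2$ with $S_1$ central); combining gives $S_1\subseteq S_2\setminus\partial S_2$. If $T\cong\Delta_{m+3}$, then $C(S_1)$ forces $S_1=T\setminus\partial T$ and $C(S_2)$ forces $T$ to be a corner subtriangle of $S_2$. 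Placing $S_2=\Delta_{m+4}$ with vertices $\{x\in\Z_{\geq 0}^3\mid \sum x_i=m+4\}$ and $T=\{x\in S_2\mid x_1\geq 1\}$, a short computation gives $T\setminus\partial T=\{x_1\geq 2,\ x_2\geq 1,\ x_3\geq 1\}\subseteq\{x_1\geq 1,\ x_2\geq 1,\ x_3\geq 1\}=S_2\setminus\partial S_2$, so again $S_1\subseteq S_2\setminus\partial S_2$.

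The only genuinely delicate point is the exceptional $m=1$ behaviour appearing in both directions, where $S_1$ may sit at the centre of a $\Delta_2$ rather than in a corner. Here I would lean on the explicit special-element entry of Corollary \ref{Cor_CliqueSummary} together with the uniqueness of the $\Delta_2$ determined by its central triangle, which holds because each inner edge of a locally cyclic graph has exactly two common neighbours, so the outer corner completing $S_1$ to a $\Delta_2$ is forced. Everything else is routine bookkeeping with the clique description and the elementary coordinate inclusion used in the $\Delta_{m+3}$ case.
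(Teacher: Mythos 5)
Your proof is correct and follows essentially the same route as the paper's: both reduce a common element $T\in C(S_1)\cap C(S_2)$ to the two possible levels $m+1$ and $m+3$ via Corollary \ref{Cor_CliqueSummary}, use $S_2\setminus\partial S_2$ as the witness for the forward direction, and conclude $S_1\subseteq S_2\setminus\partial S_2$ in each backward case. You simply spell out more of the bookkeeping than the paper does (the explicit level count, the $m=1$ exceptional case via the $\nabla_1$-centred $\Delta_2$, and the coordinate check for the $\Delta_{m+3}$ case), all of which checks out.
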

\begin{proof}
    If $S_1\subseteq S_2\setminus\partial S_2$, the element $S_2\setminus\partial S_2\cong \Delta_{m+1}$ lies in both cliques.
   
    \noindent Conversely, if the cliques intersect, they intersect in a 
    $T$ fulfilling $S_1\subseteq T \subseteq S_2$ with 
    $T \cong\Delta_{m+1}$. Further, the distance of $T$ and $\partial S_2$ is $1$
    or $T\cong\Delta_{m+3}$ and the distance of $S_1$ and $\partial T$ is $1$.
    Thus, the distance between $S_1$ and $\partial S_2$ is 1 and  $S_1\subseteq S_2\setminus\partial S_2$.
\end{proof}	

\begin{lem}
    For $S_1,S_2\in V(G_{n+1})$ with $S_1\cong \Delta_{m}$ and 
    $S_2\cong \Delta_{m+6}$ for an $m\geq 0$, the cliques $C(S_1)$ and 
    $C(S_2)$ intersect non-trivially if and only if $S_1$ and 
    $S_2$ are adjacent in $G_{n+1}$, i.\,e. $S_1\subseteq S_2\setminus\neig{G}{\partial S_2}$.
\end{lem}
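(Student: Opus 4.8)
The plan is to mirror the two preceding lemmas (the cases $k=2$ and $k=4$) and argue purely through the level-by-level description of the cliques provided by Corollary~\ref{Cor_CliqueSummary}. The decisive simplification in the present case is that the admissible levels of the two cliques overlap in a \emph{single} value, so no case distinction on the intersecting triangle will be needed. First I record the two facts I will use repeatedly. By Remark~\ref{Rem_TriangleInclusionBoundaryDistance}, applied to $S_2\cong\Delta_{m+6}$, we have $S_2\setminus\partial S_2\cong\Delta_{m+3}$ and $S_2\setminus\neig{G}{\partial S_2}\cong\Delta_m$; since $S_1\cong\Delta_m$ as well, the adjacency condition $S_1\subseteq S_2\setminus\neig{G}{\partial S_2}$ is equivalent to the equality $S_1=S_2\setminus\neig{G}{\partial S_2}$. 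Moreover, applying the remark once more to $\Delta_{m+3}$ gives $(S_2\setminus\partial S_2)\setminus\partial(S_2\setminus\partial S_2)=S_2\setminus\neig{G}{\partial S_2}$.

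For the forward direction I assume $S_1=S_2\setminus\neig{G}{\partial S_2}$ and exhibit a common element. I set $T\isdef S_2\setminus\partial S_2\cong\Delta_{m+3}$, which is a subgraph of $G$ of level $m+3$; since $S_2\cong\Delta_{m+6}\in V(G_{n+1})$ forces $m+5\le n$ and $m+6\equiv_2 n+1$, this level is $\le n$ and of the parity required for $T$ to be a vertex of $G_n$. By the identity above, $T\setminus\partial T=S_2\setminus\neig{G}{\partial S_2}=S_1$, so $T$ is the enclosing $\Delta_{m+3}$ forming the term $M_{m+3}$ of $C(S_1)$ and, simultaneously, the central element $S_2\setminus\partial S_2$ of $C(S_2)$. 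Hence $T\in C(S_1)\cap C(S_2)$.

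For the converse I take any $T\in C(S_1)\cap C(S_2)$ and compare levels via Corollary~\ref{Cor_CliqueSummary}. Every element of $C(S_1)$ has level in $\{m-3,m-1,m+1,m+3\}$ (for $m=0$ read $C(S_1)$ through part~2 of the corollary, giving levels $\{1,3\}$), whereas every element of $C(S_2)$ has level in $\{m+3,m+5,m+7,m+9\}$. These sets meet only in $m+3$, so $T\cong\Delta_{m+3}$. Any level-$(m+3)$ element of $C(S_1)$ satisfies $S_1=T\setminus\partial T$ by its defining property in the corollary, while the \emph{only} level-$(m+3)$ element of $C(S_2)$ is its central element, forcing $T=S_2\setminus\partial S_2$. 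Combining these with the boundary identity recorded above yields $S_1=T\setminus\partial T=S_2\setminus\neig{G}{\partial S_2}$, i.e.\ the claimed adjacency.

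I expect no serious obstacle here: unlike the $k\in\{2,4\}$ lemmas there is a unique candidate level, so the entire argument reduces to reading off levels and invoking the uniqueness of the central element of $C(S_2)$. The only points deserving care are the repeated use of Remark~\ref{Rem_TriangleInclusionBoundaryDistance} and the degenerate base case $m=0$; the latter is handled uniformly because part~2 of Corollary~\ref{Cor_CliqueSummary} still characterises its level-$3$ elements $T$ by $S_1=T\setminus\partial T$, even though there may be two of them rather than one.
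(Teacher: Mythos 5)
Your proposal is correct and follows essentially the same route as the paper: both directions hinge on identifying $T=S_2\setminus\partial S_2\cong\Delta_{m+3}$ as the unique possible common element and then translating the boundary-distance identities from Remark \ref{Rem_TriangleInclusionBoundaryDistance}. Your version merely makes explicit (via the level enumeration from Corollary \ref{Cor_CliqueSummary}) why the intersection can only occur at level $m+3$, a step the paper leaves implicit.
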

\begin{proof}
     If 
     $S_1\subseteq S_2\setminus\neig{G}{\partial S_2}$, the only 
  $T \cong \Delta_{m+3}$ such that 
    $S_1 \subset T\setminus\partial T$ and $T \subset S_2\setminus\partial S_2$ is $T=S_2\setminus \partial S_2$, by Corollary \ref{Cor_CliqueSummary}. This subgraph $T$ lies in both $C(S_1)$ and $C(S_2)$. 
    
    Conversely, if the cliques intersect in a $T\cong\Delta_{m+3}$, this subgraph $T$ has 
    distance 1 to both the boundaries of $S_1$ and $S_2$; therefore, the boundaries 
    of $S_1$ and $S_2$ have distance 2.
\end{proof}

\noindent The preceding lemmata can be summarised in the following way:

\begin{cor}\label{Cor_Edges}
    For $S_1,S_2\in V(G_{n+1})$, the cliques $C(S_1)$ and $C(S_2)$ 
    intersect non-trivially if and only if $S_1$ and $S_2$ are 
    adjacent in $G_{n+1}$. Furthermore, for every 
    $n\in \Z_{\geq0}$, $G_n\cong k^nG$.
\end{cor}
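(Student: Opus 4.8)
The plan is to assemble the corollary from the per-case lemmata just proved and then to run a short induction for the statement $G_n\cong k^nG$. For the equivalence, I would first record that both relations in play — non-trivial intersection of $C(S_1)$ and $C(S_2)$, and adjacency of $S_1,S_2$ in $G_{n+1}$ — are symmetric in $S_1$ and $S_2$. Hence we may assume $S_1\cong\Delta_m$ and $S_2\cong\Delta_{m'}$ with $m\leq m'$. Since every vertex of $G_{n+1}$ has level of the same parity as $n+1$, the difference $m'-m$ is even, say $m'-m=k$ with $k\in\{0,2,4,6,8,\dots\}$. If $k\in\{0,2,4,6\}$, one of the preceding lemmata (the two $k=0$ lemmata together, or the single lemma for each of $k=2,4,6$) applies directly and yields the claimed equivalence. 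The only gap these lemmata leave is the case $k\geq 8$, which I would settle separately.

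To dispose of $k\geq 8$, I would make explicit the level bookkeeping implicit in Corollary \ref{Cor_CliqueSummary}: every vertex occurring in a clique $C(S)$ with $S$ of level $\ell$ has level in $\{\ell-3,\ell-1,\ell+1,\ell+3\}$ (for $\ell=0$ only levels $1,3$ occur; for $\ell=1,2$ the exceptional element still lands in this range). Consequently any common vertex $T\in C(S_1)\cap C(S_2)$ would have to lie in $\{m-3,\dots,m+3\}\cap\{m'-3,\dots,m'+3\}$, which is empty once $m'-m\geq 8$; so the cliques cannot meet. On the adjacency side, Definition \ref{Def_theCliqueGraph} only connects triangular-shaped subgraphs whose levels differ by at most $6$, so $S_1$ and $S_2$ are non-adjacent as well. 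Thus for $k\geq 8$ both sides of the equivalence are false, completing the first assertion.

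For the claim $G_n\cong k^nG$ I would induct on $n$. The base case $n=0$ is $G_0=G=k^0G$, already noted after Definition \ref{Def_theCliqueGraph}. For the step, assume $G_n\cong k^nG$. By Theorem \ref{Theo_bijective} the map $C\colon V(G_{n+1})\to\{\text{cliques of }G_n\}=V(kG_n)$ is a bijection, and by the equivalence just established it carries adjacency in $G_{n+1}$ exactly to non-trivial clique intersection, i.e.\ to adjacency in $kG_n$. Hence $C$ is a graph isomorphism $G_{n+1}\to kG_n$, and combining it with the inductive hypothesis gives $G_{n+1}\cong kG_n\cong k(k^nG)=k^{n+1}G$.

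Essentially all of the difficulty has already been absorbed into the case lemmata and into Theorem \ref{Theo_bijective}, so the corollary itself is mostly assembly. The one place that needs a genuinely fresh (if short) argument is the $k\geq 8$ case, where I must invoke the level-range observation from Corollary \ref{Cor_CliqueSummary} rather than a case-lemma; this is the single point I would be careful to state explicitly. No serious obstacle is expected, though I would take care that $kG_n$ is formed correctly even when $G$ is infinite, using maximal complete subgraphs as vertices and non-empty intersection as adjacency, consistent with the paper's running convention that graphs need not be finite.
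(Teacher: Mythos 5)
Your proposal is correct and follows essentially the same route as the paper, which presents this corollary as a direct summary of the case lemmata together with Theorem \ref{Theo_bijective} and an induction on $n$. Your explicit treatment of the level-difference $k\geq 8$ case (via the level bookkeeping of Corollary \ref{Cor_CliqueSummary}) fills in a step the paper leaves implicit when it restricts to $k\in\{0,2,4,6\}$, and your argument there is sound.
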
	

\noindent Now we can finally prove  our main theorem for \mammals.

\begin{theo}\label{Thm_ConvergentorDivergent}
    Let $G$ be a triangularly simply connected locally cyclic graph with 
    minimum degree at least $6$. 
    If there is an $m\geq 0$ such that 
    $\Delta_m$ cannot be embedded into $G$, the clique operator is 
    convergent on $G$.
\end{theo}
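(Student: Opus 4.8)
The plan is to read off the conclusion from the structural identification $G_n \cong k^n G$ (Corollary \ref{Cor_Edges}), once the non-embeddability of some $\Delta_m$ is turned into a uniform bound on the levels appearing in $G_n$. I first separate off the case $\delta(G)\geq 7$: here every open neighbourhood $N_G(v)$ is a cycle of length $\deg(v)\geq 7$, hence has girth at least $7$, so $G$ is \ccon\ by \cite{larrion2002whitney} without using the embedding hypothesis at all. This leaves the case $\delta(G)=6$, where $G$ is a \mammal\ and Corollary \ref{Cor_Edges} is available.

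Next I would bound the levels. For $m'\geq m$ the graph $\Delta_m$ is isomorphic to an induced subgraph of $\Delta_{m'}$, realised for instance as the image of the triangle inclusion map $\Delta_m^{0,0,m'-m}$, and an induced subgraph of an induced subgraph is again induced; hence if $\Delta_m$ does not embed into $G$, then no $\Delta_{m'}$ with $m'\geq m$ embeds either. By Definition \ref{Def_theCliqueGraph} this forces every vertex of every $G_n$ to be a subgraph of $G$ isomorphic to some $\Delta_j$ with $j\leq m-1$ (and $j\equiv_2 n$). The degenerate values $m\in\{0,1\}$ cannot occur, since they would make $G$ have no vertex or no triangle, contradicting that $G$ is locally cyclic with $\delta=6$; thus $m-1\geq 0$.

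Now I would show that the sequence stabilises along each parity class. For $n\geq m-1$ the constraint $j\leq n$ in Definition \ref{Def_theCliqueGraph} is automatic for every embeddable level $j\leq m-1$, so the vertex set of $G_n$ is exactly $\{\,S\subseteq G \mid S\cong\Delta_j,\ j\leq m-1,\ j\equiv_2 n\,\}$ and depends only on the parity of $n$. Crucially, the four adjacency rules of Definition \ref{Def_theCliqueGraph} are phrased purely in terms of the two subgraphs in question (inclusion, neighbourhood inclusion, and boundary distance) and never refer to $n$. Consequently, for all $n\geq m-1$, the graphs $G_n$ and $G_{n+2}$ carry the same vertices and the same edges, that is $G_n=G_{n+2}$.

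Finally, Corollary \ref{Cor_Edges} yields $k^nG\cong G_n=G_{n+2}\cong k^{n+2}G$ for every $n\geq m-1$, so the iterated clique graphs of $G$ are not pairwise non-isomorphic and $G$ is \ccon. I expect the only delicate point to be this stabilisation step: one must verify that no adjacency condition hides a dependence on $n$, and that the comparison respects parity (matching $G_n$ with $G_{n+2}$ rather than $G_{n+1}$). Everything else is a direct application of the machinery assembled in the previous sections.
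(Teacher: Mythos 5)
Your proof is correct and follows essentially the same route as the paper: the paper likewise notes that non-embeddability forces $m\geq 2$ and that $G_{m-2}=G_{m}$ (these two geometric clique graphs can only differ in vertices isomorphic to $\Delta_m$, which do not exist), and then concludes $k^{m-2}G\cong k^{m}G$ from Corollary \ref{Cor_Edges}. The only differences are cosmetic: you establish the stronger eventual $2$-periodicity $G_n=G_{n+2}$ for all $n\geq m-1$ where a single coincidence suffices, and you split off $\delta\geq 7$ via \cite{larrion2002whitney} --- a reasonable precaution, since the $G_n$-machinery and Corollary \ref{Cor_Edges} are formally stated only for \mammals\ with $\delta=6$, while the theorem's hypothesis is $\delta\geq 6$.
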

\begin{proof}
    If $\Delta_m$ cannot be embedded into $G$, this means $m\geq 2$ and  
    $G_{m-2}=G_m$ since the graphs $G_m$ and $G_{m-2}$ can only differ in vertices isomorphic to $\Delta_m$, which would be subgraphs of $G$.  
    But by Corollary \ref{Cor_Edges}, this means $k^{m}G\cong k^{m-2}G$, 
    which is the definition of the clique operator being convergent on $G$.
\end{proof}

\section{Coverings}\label{Sect_UniversalCover}
Up to this point, we only considered triangularly simply connected locally cyclic graphs.
Fortunately, any other locally cyclic graph is covered by a
simply connected one, to which we will apply Theorem \ref{Thm_ConvergentorDivergent}.

For the generalisation of the theory, we need results from
\cite{larrion2000locally} and \cite{rotman1973covering}, whose ways of notation
look incompatible at first glance. Instead of repeating and re-deriving
large parts of both, we show how to fit the definitions of 
\cite{larrion2000locally} into the setting of \cite{rotman1973covering}.

While one of the sources talks about \textit{simple graphs} with \textit{edges} and
\textit{triangles} (\cite[Section 1, p. 160]{larrion2000locally}), the
other one describes \textit{complexes} with \textit{1-simplices} and
\textit{2-simplices} (\cite[Section 1, p. 642]{rotman1973covering}). We
can transition from graphs to complexes by constructing the
\textit{triangular complex} $\K{G}$, which is defined in \cite{larrion2000locally}, as we did before.

\begin{lem}
    Let $G, G'$ be simple graphs. A vertex map $V(G) \to V(G')$
    defines a \textit{homomorphism} $G \to G'$ in the sense of 
    \cite[Section 2, p. 161]{larrion2000locally} if and only if
    it defines a \textit{map} $\K{G} \to \K{G'}$ in the sense
    of \cite[Section 1, p. 642]{rotman1973covering}.
\end{lem}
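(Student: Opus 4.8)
The plan is to reduce both notions to the same elementary condition on edges and triangles, exploiting that $\K{G}$ is the (truncated) flag complex of $G$: by construction its simplices are exactly the vertices, edges, and three-circles, and, crucially, any set of at most three pairwise-adjacent vertices already spans a simplex. First I would write out both definitions side by side. The \emph{map} $\K{G}\to\K{G'}$ of \cite{rotman1973covering} is a simplicial map, i.e.\ a vertex function $f$ such that $f(\sigma)$ is a simplex of $\K{G'}$ for every simplex $\sigma$ of $\K{G}$, with repeated image vertices collapsed. The \emph{homomorphism} $G\to G'$ of \cite{larrion2000locally} is a vertex function $f$ such that for every edge $uv$ of $G$ one has either $f(u)=f(v)$ or $f(u)f(v)\in E(G')$, degenerate images being allowed. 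The governing observation is that in $\K{G'}$ three mutually adjacent distinct vertices automatically span a three-circle, so that ``preserving edges'' forces ``preserving triangles''.

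For the implication from homomorphism to map, I would verify the simplicial-map condition simplex by simplex. Vertices go to vertices trivially. An edge $uv$ is sent to $\{f(u),f(v)\}$, a single vertex if $f(u)=f(v)$ and an edge of $G'$ otherwise; in both cases a simplex of $\K{G'}$. A three-circle $\{u,v,w\}$ has all of $uv$, $vw$, $uw$ as edges, so each of $\{f(u),f(v)\}$, $\{f(v),f(w)\}$, $\{f(u),f(w)\}$ is a simplex. Distinguishing how many of $f(u),f(v),f(w)$ coincide, the image is a vertex, an edge, or a set of three pairwise-adjacent distinct vertices; in the last case the flag property of $\K{G'}$ makes it a three-circle. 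Hence $f(\sigma)$ is always a simplex and $f$ is a map of complexes. For the converse, a map of complexes in particular sends every $1$-simplex to a simplex, so for each edge $uv$ the set $\{f(u),f(v)\}$ is a simplex of $\K{G'}$; a simplex on at most two vertices is either a single vertex (giving $f(u)=f(v)$) or an edge (giving $f(u)f(v)\in E(G')$), which is exactly the homomorphism condition. This closes the equivalence.

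The argument is essentially bookkeeping, so the only real obstacle is foundational rather than computational. One must pin down the two source conventions precisely --- in particular that \emph{both} permit degenerate (collapsing) images, since a graph-homomorphism notion forbidding $f(u)=f(v)$ would not coincide with the simplicial maps of \cite{rotman1973covering} --- and one must explicitly invoke the Whitney/flag property built into $\K{\cdot}$, namely that three mutually adjacent vertices form a facet, in order to upgrade edge-preservation to triangle-preservation. Once these two points are fixed, the simplex-by-simplex verification above is routine, and no further topological input is needed.
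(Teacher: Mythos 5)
Your proposal is correct and follows essentially the same route as the paper's proof: the forward direction rests on the observation that edge-preservation forces the three pairwise-adjacent image vertices of a three-circle to span a facet of $\K{G'}$, and the converse is the trivial restriction to $1$-simplices. The only difference is that you treat the degenerate (collapsing) cases explicitly, which the paper's terser argument passes over; this is a refinement of the same argument, not a different one.
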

\begin{proof}
    Let $f: G \to G'$ be a homomorphism in the sense of
    \cite{larrion2000locally} and $\{u,v,w\}$ a triangle in $G$,
    i.\,e.\ a 2-simplex in $\K{G}$. By assumption, 
    $\{f(u),f(v)\}$, $\{f(u),f(w)\}$, and $\{f(v),f(w)\}$ are
    edges in $G'$. Thus, $\{f(u),f(v),f(w)\}$ is a triangle
    in $G'$, i.\,e.\ a 2-simplex in $\K{G'}$. The other implication is trivial.
\end{proof}

\noindent We will continue calling these maps graph homomorphisms.
We also take the next definition from \cite[Section 2, p. 162]{larrion2000locally}.

\begin{defi}\label{Def_CoveringMap}
    Let  $G, \tilde{G}$ be connected, simple graphs. A graph homomorphism  
    $p: \tilde{G} \to G$ is called a \emph{triangular covering map} 
    if it fulfils the triangle lifting property: For each triangle 
    $\{u,v,w\}\in G$ and each preimage $\tilde{u}$ of $u$, there exists a 
    (unique) triangle $\{\tilde{u},\tilde{v},\tilde{w}\}$ in 
    $\tilde{G}$ which is mapped to $\{u,v,w\}$ by $p$.
\end{defi}

\begin{lem}
    Let $G, \tilde{G}$ be connected, simple graphs and 
    $p: \tilde{G} \to G$ be a homomorphism. Then, $p$ is a triangular covering
    map  if
    and only if $(\K{\tilde{G}},p)$ is a 
    \textit{covering complex} (\cite[Section 2, p. 650]{rotman1973covering}).
\end{lem}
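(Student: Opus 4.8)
The plan is to unwind Rotman's notion of a \emph{covering complex} into its local (star) form and then match it, dimension by dimension, with the triangle lifting property. Recall that, having identified homomorphisms with maps of complexes by the previous lemma, $(\K{\tilde{G}},p)$ is a covering complex exactly when $p$ restricts, for every vertex $\tilde{v}$ of $\K{\tilde{G}}$, to an isomorphism from the star of $\tilde{v}$ onto the star of $p(\tilde{v})$; equivalently, for every simplex $\sigma$ of $\K{G}$ and every preimage $\tilde{v}$ of one of its vertices there is a unique simplex $\tilde{\sigma}$ with $\tilde{v}\in\tilde{\sigma}$ and $p(\tilde{\sigma})=\sigma$. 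Since the simplices of $\K{G}$ are precisely the vertices, edges, and \thcirs\ of $G$, this condition splits into a $0$-, a $1$-, and a $2$-dimensional part, and the $2$-dimensional part is verbatim the triangle lifting property of Definition \ref{Def_CoveringMap}.

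The direction ``covering complex $\Rightarrow$ triangular covering map'' is then immediate: if the star condition holds, then in particular every \thcir\ (a $2$-simplex) lifts uniquely through each chosen preimage of one of its vertices, which is exactly what it means for $p$ to be a triangular covering map. No work beyond reading off the $2$-dimensional part of the star condition is required here.

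For the converse I would assume the triangle lifting property and verify the star isomorphism at an arbitrary $\tilde{v}$. The $2$-dimensional part is the hypothesis itself, and its uniqueness clause simultaneously yields injectivity of $p$ on the \thcirs\ incident to $\tilde{v}$, so the facets at $\tilde{v}$ biject with the facets at $p(\tilde{v})$. The $0$-dimensional part reduces to surjectivity of $p$ on vertices, which I would obtain by propagating facet lifts along paths and invoking connectivity of $G$ and $\tilde{G}$. The real content is the $1$-dimensional part: given an edge $\{u,w\}$ of $G$ with $u=p(\tilde{v})$, I must produce a \emph{unique} edge $\{\tilde{v},\tilde{w}\}$ of $\tilde{G}$ over it. When $\{u,w\}$ is a face of a facet $\{u,w,x\}$, lifting that facet through $\tilde{v}$ supplies the edge, and two distinct edges at $\tilde{v}$ over $\{u,w\}$ would force two distinct facet lifts over a common facet, contradicting the uniqueness already established; injectivity and surjectivity of the edge part of the star map follow.

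The main obstacle is exactly this $1$-dimensional bookkeeping: extracting both existence and, above all, uniqueness of edge lifts from a hypothesis phrased only in terms of \thcirs. The crux is that each relevant edge can be realised as a face of a facet, so that the unique facet lift transfers to the edge; I would isolate this reduction as the only nontrivial step and funnel both the edge-existence and the edge-uniqueness claims through the already-proven uniqueness of facet lifts, using connectivity to globalise from the local star statements to a genuine covering of $\K{G}$.
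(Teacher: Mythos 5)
Your reduction of the backward direction to the triangle lifting property alone does not work, and this is where your argument genuinely falls short of the paper's. The paper's proof of ``triangular covering map $\Rightarrow$ covering complex'' does not derive the edge statement from the triangle statement: it invokes the \emph{unique edge lifting property} as a separate ingredient, citing \cite[Section 2, p.~161]{larrion2000locally}, where edge lifting is part of the definition of a triangular covering map (Definition \ref{Def_CoveringMap} understates this). You instead try to manufacture both existence and uniqueness of edge lifts out of facet lifts. Existence already fails for a general connected simple graph, since an edge of $G$ need not be a face of any \thcir. More seriously, your uniqueness step --- ``two distinct edges at $\tilde{v}$ over $\{u,w\}$ would force two distinct facet lifts over a common facet'' --- is false: a second edge at $\tilde{v}$ lying over $\{u,w\}$ need not be contained in any \thcir\ of $\tilde{G}$, so it produces no second facet lift and no contradiction. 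Concretely, let $G=K_3$ with vertices $u,v,w$ and let $\tilde{G}$ consist of two disjoint triangles $\{\tilde{u}_1,\tilde{v}_1,\tilde{w}_1\}$ and $\{\tilde{u}_2,\tilde{v}_2,\tilde{w}_2\}$ together with the extra edge $\tilde{u}_1\tilde{v}_2$, with $p$ the index-forgetting map. Every vertex of $\tilde{G}$ lies in exactly one \thcir\ over $\{u,v,w\}$, so the triangle lifting property holds, yet $p^{-1}(\{u,v\})$ contains the pairwise non-disjoint $1$-simplices $\{\tilde{u}_1,\tilde{v}_1\}$, $\{\tilde{u}_2,\tilde{v}_2\}$, $\{\tilde{u}_1,\tilde{v}_2\}$, so $(\K{\tilde{G}},p)$ is not a covering complex. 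Unique edge lifting must therefore be taken as a hypothesis, as the paper's proof does; it cannot be funnelled through the uniqueness of facet lifts.

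A secondary point: you replace Rotman's definition of a covering complex (each $p^{-1}(s)$ is a disjoint union of simplices mapped isomorphically onto $s$) by a star-isomorphism condition and treat the equivalence as a recollection. That equivalence holds for connected complexes but is itself a claim needing proof or a precise citation; the paper works directly with Rotman's definition and, in the forward direction, with his Theorem~2.1 on unique path lifting. Your forward direction is fine modulo that reformulation; the backward direction needs the edge lifting hypothesis restored.
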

\begin{proof}
    We only need to show that the lifting properties are equivalent.

   \noindent For the first part, assume that $(\K{\tilde{G}},p)$ is
    a covering complex. By Theorem 2.1 (\cite[p. 651]{rotman1973covering}),
    $p$ has the unique path lifting property. It remains to show that
    $p$ has the triangle lifting property. Let $\{u,v,w\}$ be a triangle
    in $G$, i.\,e.\ a 2-simplex in $\K{G}$. Since
    $p^{-1}(\{u,v,w\})$ is the union of 2-simplices, every $\tilde{u} \in p^{-1}(u)$
    lies in some triangle of $\tilde{G}$.

    For the second part, assume that $p$ is a triangular covering map. Consider
    a 1-simplex $\{u,v\}$ in $\K{G}$ and let $\tilde{u} \in p^{-1}(u)$.
    By the unique edge lifting property of $p$ 
    (\cite[Section 2, p. 161]{larrion2000locally}), there is a unique 
    $\tilde{v} \in p^{-1}(v)$ adjacent to $\tilde{u}$. By the same argument,
    $\tilde{u}$ is unique with respect to $\tilde{v}$. Thus, $p^{-1}(\{u,v\})$
    splits into pairwise disjoint 1-simplices.

    Next, consider a 2-simplex $\{u,v,w\}$ in $\K{G}$. By the triangle
    lifting property (\cite[Section 2, p. 162]{larrion2000locally}), 
    $p^{-1}(\{u,v,w\})$ is the union of triangles in $\tilde{G}$. If two
    different triangles would intersect, the unique edge lifting property
    would be violated.
\end{proof}

\noindent We take the following definition from \cite[Section 3, p.\ 663]{rotman1973covering}.

\begin{defi}
	A \emph{universal covering complex} of $K$ is a covering complex $p\colon\tilde{K}\to K$ such that, for ever covering complex $q\colon \tilde{J}\to K$ there exists a unique map $h\colon \tilde{K}\to\tilde{J}$ making the following diagram commute:
	\begin{figure}[htbp]
		\centering
		\begin{tikzpicture} [scale=0.7]
		\HexagonalCoordinates{5}{5}
		\node (K') at (A01) {$\tilde{K}$};
		\node (J') at (A22) {$\tilde{J}$};
		\node (K) at (A30)  {$K$};
		\draw[dashed,->] (K')--(J')  node[draw=none,fill=none,font=\scriptsize,midway,above] {$h$}; 
		\draw[->] (K')--(K)  node[draw=none,fill=none,font=\scriptsize,midway,below] {$p$};
		\draw[->] (J')--(K)  node[draw=none,fill=none,font=\scriptsize,midway,right] {$q$};
		\end{tikzpicture}
	\end{figure}
\end{defi}
\noindent Universal covering complexes are unique up to isomorphism, 
and every connected complex has a universal covering 
complex (\cite[Section 3, p.\ 663]{rotman1973covering}).
We apply this to our graph setting.

\begin{defi}
	 Let $G, \tilde{G}$ be connected, simple graphs and 
	$p: \tilde{G} \to G$ be a triangular covering map. Then, $\tilde{G}$ is the \emph{universal cover} of $G$ if $(\K{\tilde{G}},p)$ is the universal covering complex of $\K{G}$.
\end{defi}

\noindent We would like to apply Proposition 3.2 from \cite{larrion2000locally} to the universal cover:
\begin{prop}\label{Prop_Galois_in_Cliques}
    Let $p: \tilde{G} \to G$ be Galois with group $\Gamma$. Then,
    $p_k: k\tilde{G} \to kG$ is also Galois with group $\Gamma_k \cong \Gamma$.
\end{prop}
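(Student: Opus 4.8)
The plan is to recognise this as Proposition 3.2 of \cite{larrion2000locally}: the preceding lemmas identify our triangular covering maps (Definition \ref{Def_CoveringMap}) with the covering complexes of \cite{rotman1973covering} and with the coverings of \cite{larrion2000locally}, and since the clique operator acts on a graph identically in all three frameworks, the cited statement applies verbatim. For completeness I would also spell out the underlying argument, whose backbone is the functoriality of $k$. Recall that a graph isomorphism $f\colon H\to H'$ induces a graph isomorphism $kf\colon kH\to kH'$, $C\mapsto f(C)$, compatibly with composition. Since $p$ is Galois, the deck group $\Gamma$ acts on $\tilde{G}$ by graph automorphisms with $G\cong\tilde{G}/\Gamma$ and $p$ the quotient; applying $k$ then yields a group homomorphism $\Gamma\to\Aut(k\tilde{G})$, $\gamma\mapsto k\gamma$, whose image I would call $\Gamma_k$.

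First I would define $p_k\colon k\tilde{G}\to kG$ by $\tilde{C}\mapsto p(\tilde{C})$ and check that it is a well-defined graph homomorphism. Because a covering map is a local isomorphism, $p$ restricts injectively to every clique $\tilde{C}$, so $p(\tilde{C})$ is a complete subgraph; its maximality follows from the triangle-lifting property, since any vertex $w$ extending $p(\tilde{C})$ would, by lifting the triangles $\{p(\tilde v),u,w\}$ through a vertex $\tilde v\in\tilde C$, produce a vertex $\tilde w$ extending $\tilde{C}$. Intersecting cliques visibly map to intersecting cliques, adjacent distinct cliques map to distinct ones (two cliques over the same $C$ sharing a lifted vertex coincide), and $p_k\circ k\gamma=p_k$ for all $\gamma\in\Gamma$. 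Surjectivity follows by the same lifting: given a clique $C$ of $G$ and a lift of one of its vertices, repeated triangle-lifting produces a clique of $\tilde{G}$ over $C$.

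The heart of the argument is to identify the fibres of $p_k$ with the $\Gamma_k$-orbits. For \emph{freeness}, suppose $\gamma\tilde{C}=\tilde{C}$ as a vertex set. Since $p|_{\tilde{C}}$ is injective, distinct vertices of $\tilde{C}$ lie in distinct fibres of $p$; as $p\circ\gamma=p$, the automorphism $\gamma$ must fix each vertex of $\tilde{C}$, hence $\gamma=\mathrm{id}$ because a deck transformation fixing a point of a connected cover is trivial. In particular $\Gamma\to\Gamma_k$ is injective, so $\Gamma_k\cong\Gamma$. For \emph{orbits $=$ fibres}, if $p(\tilde{C}_1)=p(\tilde{C}_2)=C$, I would choose $\tilde{v}_1\in\tilde{C}_1$ and the lift $\tilde{v}_2\in\tilde{C}_2$ of $p(\tilde{v}_1)$; since $p$ is Galois there is $\gamma\in\Gamma$ with $\gamma\tilde{v}_1=\tilde{v}_2$, and then $\gamma\tilde{C}_1$ and $\tilde{C}_2$ are two cliques over $C$ sharing the lift $\tilde{v}_2$, forcing $\gamma\tilde{C}_1=\tilde{C}_2$ by uniqueness of the clique lift through a fixed vertex.

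It then follows that $p_k$ is the quotient map $k\tilde{G}\to k\tilde{G}/\Gamma_k\cong kG$ for a free action of $\Gamma_k$, and verifying the triangle-lifting property for $p_k$ (a triangle of cliques in $kG$, together with a lift of one of its vertices, lifts uniquely by the orbit argument just given) shows that $p_k$ is a triangular covering map which is Galois with group $\Gamma_k\cong\Gamma$. The main obstacle is precisely this last verification that $p_k$ satisfies the covering and triangle-lifting properties: one must confirm that the local combinatorics of cliques — and of triangles in the clique graph — is faithfully reflected by $p$, which is exactly where the local-isomorphism property of the covering is indispensable. The functoriality of $k$, the freeness of the action, and the orbit computation are then essentially formal.
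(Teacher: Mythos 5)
The paper gives no proof of this proposition at all: it is imported verbatim as Proposition 3.2 of \cite{larrion2000locally}, with the surrounding lemmas serving only to reconcile the definitions of \cite{larrion2000locally} and \cite{rotman1973covering} with the triangular covering maps of Definition \ref{Def_CoveringMap} --- which is exactly your opening move. Your additional sketch of the underlying argument (functoriality of $k$, injectivity of $p$ on cliques, unique clique lifting through a fixed vertex, and the identification of the fibres of $p_k$ with the $\Gamma_k$-orbits) is the standard proof and looks sound, but it goes beyond anything the paper records.
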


\noindent To apply this proposition to the universal cover of an arbitrary locally cyclic graph of minimum degree at least $6$, we need to show that the universal cover always
defines a \textit{Galois triangular map} (\cite[Section 3, p. 165]{larrion2000locally}).\\
\begin{lem}\label{Lem_universal_covering_is_Galois}
    Let $G,\tilde{G}$ be connected (simple) graphs such that $\K{\tilde{G}}$
    is the universal cover of $\K{G}$. Then, the associated covering map
    $p\colon \K{\tilde{G}} \to \K{G}$ is Galois.
\end{lem}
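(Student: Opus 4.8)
Recall that a triangular covering map is \emph{Galois} exactly when its group of deck transformations --- the graph automorphisms $\sigma$ of $\tilde{G}$ satisfying $p\circ\sigma=p$, equivalently the automorphisms of $\K{\tilde{G}}$ over $\K{G}$ --- acts transitively on each fibre $p^{-1}(u)$. The plan is to verify this transitivity directly, exploiting that the universal cover $\K{\tilde{G}}$ is simply connected, so that its edge-path group $\pi(\K{\tilde{G}})$ (in the sense of \cite{rotman1973covering}) is trivial.

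First I would fix a vertex $u$ of $G$ together with two preimages $\tilde{u},\tilde{u}'\in p^{-1}(u)$, and construct a deck transformation carrying $\tilde{u}$ to $\tilde{u}'$. The tool is the lifting criterion for covering complexes, the combinatorial analogue of the topological statement supplied by \cite{rotman1973covering}: a map $f\colon(\K{\tilde{G}},\tilde{u})\to(\K{G},u)$ lifts through $p$ to a map based at a prescribed $\tilde{u}'$ as soon as $f_*(\pi(\K{\tilde{G}}))\subseteq p_*(\pi(\K{\tilde{G}}))$. Applying this with $f=p$ and using that $\pi(\K{\tilde{G}})$ is trivial --- so the inclusion is vacuous --- yields a unique lift $\sigma\colon(\K{\tilde{G}},\tilde{u})\to(\K{\tilde{G}},\tilde{u}')$ with $p\circ\sigma=p$. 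Symmetrically one obtains $\tau\colon(\K{\tilde{G}},\tilde{u}')\to(\K{\tilde{G}},\tilde{u})$ with $p\circ\tau=p$.

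Next I would argue that $\sigma$ is an automorphism. Both $\tau\circ\sigma$ and the identity are lifts of $p$ through $p$ fixing the base point $\tilde{u}$; by the uniqueness half of unique path lifting (Theorem 2.1 of \cite{rotman1973covering}) they coincide, so $\tau\circ\sigma=\mathrm{id}$, and likewise $\sigma\circ\tau=\mathrm{id}$. Hence $\sigma$ is a deck transformation with $\sigma(\tilde{u})=\tilde{u}'$, proving transitivity on the fibre and therefore that $p$ is Galois. Equivalently, one may quote the classification of connected covering complexes by subgroups of $\pi(\K{G})$ from \cite{rotman1973covering}, under which a cover is Galois iff its associated subgroup is normal: the universal cover corresponds to the trivial subgroup, which is visibly normal.

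\textbf{Main obstacle.} The substance lies not in the group-theoretic bookkeeping but in importing the lifting criterion into Rotman's purely combinatorial framework and in confirming that the notion of a \emph{Galois triangular map} from \cite{larrion2000locally} coincides with regularity of the associated covering complex. Once the lifting criterion and the simple connectivity of $\K{\tilde{G}}$ are in hand, transitivity is immediate; the care required is in the base-point bookkeeping and in checking that deck transformations of $\K{\tilde{G}}$ are precisely the graph automorphisms of $\tilde{G}$ commuting with $p$.
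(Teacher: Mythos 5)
Your argument is correct, and it reaches the same conclusion by a more self-contained route than the paper. The paper's proof is essentially two citations to \cite{rotman1973covering}: Corollary 3.11 there gives that the deck transformation group of a \emph{regular} covering complex acts transitively on fibres, and the remark at the top of p.~666 gives that the universal covering complex is regular (it corresponds to the trivial, hence normal, subgroup of $\pi(\K{G})$) --- which is exactly the ``equivalently, one may quote the classification'' aside at the end of your second paragraph. Your main line of argument instead unfolds what is behind those citations: you build the deck transformation $\sigma$ with $\sigma(\tilde u)=\tilde u'$ directly from the lifting criterion, using triviality of $\pi(\K{\tilde G})$ to make the subgroup condition vacuous, and you get invertibility from uniqueness of based lifts. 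That buys a proof whose only imported ingredients are the lifting criterion and unique path lifting, at the cost of having to justify one fact you currently take for granted: that the universal covering complex, \emph{as defined in the paper by its universal property}, has trivial edge-path group. This is again a result of Rotman (the universal covering complex is the simply connected one), so it is a citation to add rather than a gap, but as written your proof silently substitutes ``simply connected'' for ``universal,'' and the paper's definition is the latter. Your closing remarks about identifying deck transformations of $\K{\tilde G}$ with automorphisms of $\tilde G$ commuting with $p$, and Galois with regular, are exactly the bookkeeping the paper's surrounding lemmas carry out, so no issue there.
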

\begin{proof}
    Define $\Gamma \isdef \{ \gamma \in \Aut(\tilde{G}) \mid p \circ \gamma = p \}$
    (the deck transformations from \cite[Section 3, p. 665]{rotman1973covering}).
    We need to show that $\Gamma$ acts transitively on each fibre of $p$. This is
    proven in Corollary 3.11 (\cite[p. 667]{rotman1973covering}) if $p$ is
    \textit{regular}. Since $p$ is a covering map from the universal cover,
    the regularity follows from the remark at the top of p. 666 in \cite{rotman1973covering}.
\end{proof}

\noindent Now, we can conclude that clique convergence of the universal cover transfers
to the original graph.
\begin{lem}\label{Lem_CoverConvergent}
    Let $G$ be a locally cyclic graph with $\delta(G)=6$, whose universal cover is \ccon.
     Then, $G$ is \ccon\enspace as well.
\end{lem}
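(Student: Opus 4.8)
The plan is to transfer convergence across the Galois covering $p\colon\tilde G\to G$ by realising each iterated clique graph $k^nG$ as a quotient of $k^n\tilde G$ and then collapsing the tower on the cover in a deck-group-equivariant way. First I would record the structural facts. Since $\tilde G$ is the universal cover, $\K{\tilde G}$ is simply connected, so $\tilde G$ is triangularly simply connected; being locally cyclic with $\delta=6$ it is therefore a \mammal. By Lemma \ref{Lem_universal_covering_is_Galois} the covering $p$ is Galois; write $\Gamma\isdef\{\gamma\in\Aut(\tilde G)\mid p\circ\gamma=p\}$ for its deck group, so that $G\cong\tilde G/\Gamma$. Applying Proposition \ref{Prop_Galois_in_Cliques} once gives that $p_k\colon k\tilde G\to kG$ is Galois with group $\Gamma_k\cong\Gamma$, and iterating it $n$ times yields a Galois covering $p_{k^n}\colon k^n\tilde G\to k^nG$ with group $\Gamma_{k^n}\cong\Gamma$. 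In particular $k^nG\cong(k^n\tilde G)/\Gamma_{k^n}$ for every $n$, and the groups $\Gamma_{k^n}$ are identified with $\Gamma$ compatibly along the tower, since each $p_{k^{n+1}}$ is induced functorially from $p_{k^n}$.

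Next I would exploit that $\tilde G$ is a convergent \mammal, so that its tower collapses canonically rather than merely up to abstract isomorphism. By Corollary \ref{Cor_Edges} we have $k^n\tilde G\cong\tilde G_n$, the geometric clique graph, whose vertices are honest subgraphs of $\tilde G$ isomorphic to the triangular graphs $\Delta_\ell$. For a \mammal\ I would invoke the dichotomy that convergence is equivalent to the boundedness of such triangular subgraphs; the sufficient direction is Theorem \ref{Thm_ConvergentorDivergent}, and the direction I need, convergence $\Rightarrow$ boundedness, is the contrapositive of divergence for unbounded $\Delta_m$. Hence there is an $m$ with $\Delta_m\not\hookrightarrow\tilde G$, and then, exactly as in the proof of Theorem \ref{Thm_ConvergentorDivergent}, the graphs $\tilde G_m$ and $\tilde G_{m-2}$ have literally the same vertex and edge sets: both consist of the $\Delta_\ell$-shaped subgraphs of $\tilde G$ with $\ell\le m-2$ and the same parity, and no new level-$m$ vertex appears. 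Thus $k^m\tilde G=k^{m-2}\tilde G$ via the identity on these subgraphs.

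Finally I would descend this canonical equality to the quotient. Because $\Gamma\le\Aut(\tilde G)$ acts on the subgraphs of $\tilde G$, and the vertex sets of $\tilde G_m$ and $\tilde G_{m-2}$ are the \emph{same} collection of subgraphs, the identity $k^m\tilde G=k^{m-2}\tilde G$ is $\Gamma$-equivariant and carries $\Gamma_{k^m}$ onto $\Gamma_{k^{m-2}}$ under the identifications above. Quotienting therefore gives $k^mG\cong(k^m\tilde G)/\Gamma_{k^m}=(k^{m-2}\tilde G)/\Gamma_{k^{m-2}}\cong k^{m-2}G$, so two distinct iterated clique graphs of $G$ are isomorphic and $G$ is \ccon. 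The hard part is precisely the equivariance step: an arbitrary graph isomorphism $k^i\tilde G\cong k^j\tilde G$ need not respect the deck action, so one cannot quotient blindly. The point of routing through the \mammal\ structure is that convergence of the cover is witnessed not by an abstract isomorphism but by the canonical identity $\tilde G_m=\tilde G_{m-2}$, which is manifestly equivariant; verifying that this identity matches the deck groups $\Gamma_{k^m}$ and $\Gamma_{k^{m-2}}$ — i.e.\ that the functorial construction of $p_{k^n}$ is compatible with the collapse — is the technical heart of the argument.
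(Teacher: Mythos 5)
Your architecture is genuinely different from the paper's, and it rests on a step that the paper does not provide and in fact explicitly flags as open. You need the implication ``$\tilde G$ is \ccon\ $\Rightarrow$ there is an $m$ with $\Delta_m\not\hookrightarrow\tilde G$'', i.e.\ the converse of Theorem \ref{Thm_ConvergentorDivergent}. The paper only proves the direction ``some $\Delta_m$ does not embed $\Rightarrow$ convergent''; the contrapositive you invoke would require showing that a \mammal\ containing $\Delta_m$ for every $m$ is \cdiv, and Section \ref{Sect_FurtherResearch} states precisely that this is not established, because for an infinite graph $k^{n}G\subsetneq k^{n+l}G$ does not preclude $k^{n}G\cong k^{n+l}G$. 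Since the universal cover $\tilde G$ is in general infinite (even when $G$ is finite), you cannot assume this dichotomy. Everything downstream of that entry point — the literal identity $\tilde G_m=\tilde G_{m-2}$ when $\Delta_m$ fails to embed, and its $\Gamma$-equivariance — is sound, but the argument never gets there.

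The paper's proof avoids this entirely: it uses only the abstract isomorphism $k^n\tilde G\cong k^{n+l}\tilde G$ supplied by convergence, iterates Proposition \ref{Prop_Galois_in_Cliques} to obtain Galois coverings $p_{k^n}\colon k^n\tilde G\to k^nG$ with deck group isomorphic to $\Gamma$, and concludes $k^nG\cong k^n\tilde G/\Gamma\cong k^{n+l}\tilde G/\Gamma\cong k^{n+l}G$; it never needs to know \emph{why} $\tilde G$ converges. You are right to worry that the middle isomorphism of quotients implicitly requires the identification of the two deck actions to be compatible with the isomorphism $k^n\tilde G\cong k^{n+l}\tilde G$ — that is a legitimate concern about the argument as written — but your repair trades a gap that might be patched for one the authors themselves present as an open problem. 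To salvage your equivariance-first strategy you would first have to prove the converse of Theorem \ref{Thm_ConvergentorDivergent} for \mammals, which is substantially harder than the lemma itself.
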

\begin{proof}
	Let  $p\colon \tilde{G} \to G$  be the triangular covering map. By Lemma \ref{Lem_universal_covering_is_Galois}, $p$ is a Galois covering map with group $\Gamma$, the group of deck transformations. We define $p_{k^n}\colon k^n\tilde G\to k^nG$ by $p_{k^0}=p_k$ and $p_{k^n}(Q)=\{p_{k^{n-1}}(v)\mid v\in Q\}$ for every $n\geq 1$. By Proposition \ref{Prop_Galois_in_Cliques}, the maps $p_{k^n}$ are Galois with groups $\Gamma_{k^n}\cong \Gamma$.
	Since the universal cover is \ccon, there are $n,l\in\N$ such that $k^n\tilde G\cong k^{n+l}\tilde G.$  Thus, $p_{k^n}$ and $p_{k^{n+l}}$ are Galois covering maps with a group isomorphic to $\Gamma$. Therefore, 
        \begin{equation*}
            k^nG\cong k^n(\tilde{G})/\Gamma\cong k^{n+l}(\tilde{G})/\Gamma\cong k^{n+l}G,
        \end{equation*}
        and $G$ is \ccon\enspace as well.
\end{proof}	

\noindent Combining Lemma \ref{Lem_CoverConvergent} with Theorem \ref{Thm_ConvergentorDivergent}, we get a general criterium for convergence.
\begin{theo}
    If $G$ is a locally cyclic graph with $\delta(G)=6$ and there is an $m\geq 0$ such that 
    $\Delta_m$ cannot be embedded into the universal cover of $G$, then $G$ is \ccon.
\end{theo}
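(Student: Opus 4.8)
The plan is to recognise that the universal cover $\tilde{G}$ is itself a \mammal\ satisfying the hypothesis of Theorem \ref{Thm_ConvergentorDivergent}, and then to push convergence back down to $G$ through Lemma \ref{Lem_CoverConvergent}.

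First I would fix the triangular covering map $p\colon \tilde{G}\to G$ associated with the universal cover and check that $\tilde{G}$ is a \mammal. The unique edge and triangle lifting properties recorded in Section \ref{Sect_UniversalCover} show that $p$ restricts to an isomorphism between the closed neighbourhood of any vertex $\tilde{v}$ and that of $p(\tilde{v})$; consequently $N_{\tilde{G}}(\tilde{v})\cong N_G(p(\tilde{v}))$ is a circle, so $\tilde{G}$ is locally cyclic, and $\deg_{\tilde{G}}(\tilde{v})=\deg_G(p(\tilde{v}))$, so $\delta(\tilde{G})=\delta(G)=6$. Since $\K{\tilde{G}}$ is by definition the universal covering complex of $\K{G}$, it is simply connected, and hence $\tilde{G}$ is triangularly simply connected. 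These three facts together say precisely that $\tilde{G}$ is a \mammal.

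Next I would apply the \mammal\ theory. By hypothesis there is an $m\geq 0$ for which $\Delta_m$ cannot be embedded into $\tilde{G}$. As $\tilde{G}$ is a triangularly simply connected locally cyclic graph of minimum degree $6$, Theorem \ref{Thm_ConvergentorDivergent} applies verbatim and yields that the clique operator is convergent on $\tilde{G}$, i.\,e.\ that $\tilde{G}$ is \ccon. Finally, $\tilde{G}$ is the universal cover of the locally cyclic graph $G$ with $\delta(G)=6$ and has just been shown to be \ccon, so Lemma \ref{Lem_CoverConvergent} immediately gives that $G$ is \ccon, completing the argument.

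The argument is essentially a chaining of two earlier results, so no hard computation is involved. The one step that genuinely requires care is the verification that the universal cover inherits all three defining properties of a \mammal---in particular local cyclicity and the exact minimum degree---which relies on the covering map being a local isomorphism on closed neighbourhoods rather than on abstract covering-complex theory alone.
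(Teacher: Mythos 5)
Your proposal is correct and follows exactly the paper's route: the paper obtains this theorem by combining Theorem \ref{Thm_ConvergentorDivergent} applied to the universal cover with Lemma \ref{Lem_CoverConvergent}. Your explicit verification that the universal cover inherits local cyclicity and minimum degree $6$ (via the covering map being a local isomorphism on neighbourhoods) is a detail the paper leaves implicit, and it is argued correctly.
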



\noindent If the graph $G$ is finite, we also have a criterium for divergence.

\begin{lem}
    Let $G$ be a finite locally cyclic graph with $\delta(G)=6$ whose universal cover is
    \cdiv. Then, $G$ is $6$-regular.
\end{lem}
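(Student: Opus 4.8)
The plan is to prove the contrapositive: assuming $G$ is finite, connected, locally cyclic, with $\delta(G)=6$ and \emph{not} $6$-regular, I will show that its universal cover $\tilde G$ is \ccon. The first observation is that $\tilde G$ is itself a \mammal. It is triangularly simply connected by the definition of the universal cover, and since the covering map $p\colon\tilde G\to G$ is a local isomorphism (the unique edge lifting property of \cite{larrion2000locally} gives a bijection $N_{\tilde G}(\tilde v)\to N_G(p(\tilde v))$ for every vertex), $\tilde G$ inherits local cyclicity and every vertex has the same degree as its image; in particular $\delta(\tilde G)=6$. Hence Theorem \ref{Thm_ConvergentorDivergent} applies to $\tilde G$, and it suffices to produce an $m$ for which $\Delta_m$ does \emph{not} embed into $\tilde G$.

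Since $\delta(G)=6$ but $G$ is not $6$-regular, there is a vertex $u\in V(G)$ with $\deg_G(u)\geq 7$. Set $R\isdef\operatorname{diam}(G)<\infty$. I claim every vertex of $\tilde G$ lies within distance $R$ of a vertex of degree at least $7$: given $\tilde w\in\tilde G$ with image $w=p(\tilde w)$, pick a path of length at most $R$ from $w$ to $u$ in $G$, lift it starting at $\tilde w$ using the unique path lifting property of $p$ (Theorem 2.1 of \cite{rotman1973covering}), and observe that its endpoint $\tilde u\in p^{-1}(u)$ again has degree $\geq 7$ because $p$ preserves degrees.

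The second ingredient is that any embedded $\Delta_m\cong S\subseteq\tilde G$ has a large flat core. If $v$ is an interior vertex of $S$ (all coordinates $\geq 1$), then $N_S(v)$ is a $6$-cycle; since $S$ is an induced subgraph and $N_{\tilde G}(v)$ is a cycle $C_L$, the six vertices of $N_S(v)$ induce a $6$-cycle inside $C_L$, which is only possible if $L=6$ (a proper vertex subset of a cycle induces a union of paths). Thus every interior vertex of $S$ has degree exactly $6$ in $\tilde G$ and all of its $\tilde G$-neighbours lie in $S$. Choosing a centre vertex $x$ with coordinates as equal as possible, so $\dist_S(x,\partial S)=\lfloor m/3\rfloor$, an induction on distance then shows that $B_{\tilde G}(x,\rho)$ consists solely of interior vertices of $S$ for $\rho=\lfloor m/3\rfloor-2$, all of which have degree $6$.

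Now take $m$ large enough that $\lfloor m/3\rfloor-2\geq R$. If $\Delta_m$ embedded into $\tilde G$ with centre $x$, the ball $B_{\tilde G}(x,R)$ would contain only degree-$6$ vertices, contradicting the first claim, which places a vertex of degree $\geq 7$ within distance $R$ of $x$. Hence $\Delta_m$ does not embed, and Theorem \ref{Thm_ConvergentorDivergent} gives that $\tilde G$ is \ccon; this establishes the contrapositive, so a \cdiv\ universal cover forces $G$ to be $6$-regular. I expect the main obstacle to be the core argument of the third paragraph: verifying rigorously that the interior of an embedded $\Delta_m$ is genuinely flat in $\tilde G$ and that the ball around its centre cannot ``leak'' out of $S$ before radius $\approx m/3$. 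Both hinge on combining local cyclicity with the fact that $S$ is an \emph{induced} subgraph, and care is needed to track how the distance to $\partial S$ decreases by at most one per step so that the inductive bound on $\rho$ is justified.
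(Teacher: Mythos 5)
Your proof is correct and follows essentially the same route as the paper's: both reduce the statement via Theorem \ref{Thm_ConvergentorDivergent} to the embeddability of arbitrarily large $\Delta_m$ in $\tilde G$, and both then lift bounded-length paths (you use graph distance and argue the contrapositive; the paper uses facet-paths and argues directly) from the centre of a huge embedded triangle to conclude that every vertex of $G$ has the degree of an inner vertex, namely $6$. Your third paragraph merely spells out the ``interior vertices are flat'' step that the paper asserts implicitly, so no substantive difference remains.
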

\begin{proof}
    Let $\tilde{G}$ be the universal cover and $p: \tilde{G} \to G$
    the universal covering map.
    Since the universal cover diverges, there is a 
    $\Delta_m \subset \tilde{G}$
    for any $m \geq 1$ which is mapped to $G$ via $p$. 
    Since $G$ is finite, the maximal length of a facet-path
    between any two vertices can be bounded by a finite
    number $d$. 
    
    Now, consider $\Delta_{3d+3} \subset \tilde{G}$. For any vertex
    $x \in G$, there is a facet-path between $p(d+1,d+1,d+1)$
    and $x$ with length at most $d$. Since $p$ is a covering map
    (compare Definition \ref{Def_CoveringMap}), this facet-path
    lifts to a facet-path in $\tilde{G}$. All vertices
    in $\Delta_{3d+3}$ with distance at most $d$ from $(d+1,d+1,d+1)$
    are inner vertices of $\Delta_{3d+3}$; thus, the vertex $x$ has the same 
    degree as such an inner vertex, namely, 6.
\end{proof}

\noindent Since by \cite[Theorem 1.1]{larrion2000locally},
a locally cyclic graph which is $6$-regular is \cdiv, we state our main theorem.

\begin{cor}[Main result]
    Let $G$ be a locally cyclic graph with minimum degree at least $6$.
    \begin{enumerate}
        \item For a finite graph $G$, the clique graph operator diverges on $G$ if and only if $G$ has only vertices of degree $6$.
        \item For an infinite graph $G$, if there exists an $m\geq 0$, such that 
            $\Delta_m$ cannot be embedded into the universal cover of $G$, the clique operator is convergent on $G$.
    \end{enumerate}
\end{cor}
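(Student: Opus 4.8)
The plan is to assemble the corollary from the results already in place, splitting into the infinite case (2) and the finite case (1) and, in each, reducing everything to a dichotomy on the minimum degree.

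For part (2), if $\delta(G)\geq 7$ then $G$ is already \ccon\ by the girth criterion of \cite{larrion2002whitney} recalled in the introduction, so the conclusion holds irrespective of the embedding hypothesis. If $\delta(G)=6$, the claim is exactly the theorem obtained by combining Lemma \ref{Lem_CoverConvergent} with Theorem \ref{Thm_ConvergentorDivergent}: the universal cover $\tilde{G}$ is triangularly simply connected and locally cyclic, and since a triangular covering map restricts to an isomorphism of neighbourhoods it preserves every vertex degree, whence $\delta(\tilde{G})=\delta(G)=6$ and $\tilde{G}$ is a \mammal. As some $\Delta_m$ fails to embed into $\tilde{G}$, Theorem \ref{Thm_ConvergentorDivergent} gives that $\tilde{G}$ is \ccon, and Lemma \ref{Lem_CoverConvergent} transfers convergence back to $G$.

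For part (1), I would prove the two implications of the equivalence separately. The direction ``$6$-regular $\Rightarrow$ \cdiv'' is \cite[Theorem 1.1]{larrion2000locally}. For the converse, suppose the finite graph $G$ is \cdiv. First I pin down $\delta(G)=6$: were $\delta(G)\geq 7$, the same girth criterion would force convergence, contradicting divergence, and since $\delta(G)\geq 6$ there is no other option. Next I take the contrapositive of Lemma \ref{Lem_CoverConvergent}; as ``\cdiv'' is precisely the negation of ``\ccon'', the divergence of $G$ forces the universal cover $\tilde{G}$ to be \cdiv. Finally I apply the finite-divergence lemma immediately preceding this corollary (a finite locally cyclic graph with $\delta=6$ whose universal cover is \cdiv\ is $6$-regular), which yields ``\cdiv\ $\Rightarrow$ $6$-regular'' and closes the equivalence.

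Since the substance lives entirely in the two cover lemmas and the Larrión--Neumann-Lara divergence result, the corollary itself is essentially bookkeeping. The only point needing care is the logical order of the converse in (1): one must first establish $\delta(G)=6$ before either cover lemma is applicable, and verify that the universal cover genuinely retains $\delta=6$ so that the finite-divergence lemma applies. I expect no real obstacle beyond using ``\cdiv'' consistently as the exact negation of ``\ccon'' so that the contrapositive of Lemma \ref{Lem_CoverConvergent} is valid with no tacit third possibility.
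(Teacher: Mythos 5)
Your proposal is correct and follows essentially the same route as the paper, which presents the corollary as an immediate assembly of the universal-cover convergence theorem, the finite-divergence lemma, and \cite[Theorem 1.1]{larrion2000locally}. The only additions are points the paper leaves implicit — the dispatch of the case $\delta\geq 7$ via the girth criterion and the verification that the universal cover is again a locally cyclic graph with $\delta=6$ (hence a \mammal) — and both are handled correctly.
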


Whether an infinite locally cyclic $G$ can be convergent, even though
its universal cover diverges, is still an open question.

\section{Further Research}\label{Sect_FurtherResearch}

\noindent In our research, we were able to decide which finite locally 
cyclic graphs with minimum degree $\delta=6$ are \ccon\enspace and 
which are \cdiv. But we are not able to decide this for infinite graphs, 
not even if they are triangularly simply connected. To prove in an 
analogous way that every \mammal\footnote{i.\,e. a triangularly simply 
	connected locally cyclic graphs with minimum degree $\delta=6$} 
  which 
contains a subgraph isomorphic to $\Delta_m$, for every $m$, is \cdiv, 
it would be necessary to show that $k^{n}G\subsetneq k^{n+l}G$ implies 
$k^{n}G\not\cong k^{n+l}G$. Even if this was proven, our classification of 
$k$-convergence would not be finished, since an 
infinite graph with a \cdiv\enspace universal cover can itself be \ccon.

Our work shows that explicit consideration of the clique
dynamics can be fruitful. It would be interesting to know whether
this approach gives feasible results for smaller minimum degrees.



\newpage
\appendix

\section{Definitions}
\begin{defi}
	For a graph $G=(V_G,E_G)$, 
	the \emph{closed neighbourhood} of $M\subset V_G$ in $G$ is given by the induced subgraph
	\begin{equation*}
	\mathemph{\neig{G}{M}}\isdef
	G[y\in V_G\mid y\in M\text{ or }\exists x\in M\colon xy\in E_G],
	\end{equation*}
	and the \emph{common neighbourhood} of $M$ is 
	\begin{equation*}
	\mathemph{\cneig{G}{M}}\isdef G[y\in V_G\mid y\in M\text{ or } \forall x\in M\colon xy\in E_G ].
	\end{equation*}
	For a subgraph $H$ of $G$, $\mathemph{\neig{G}{H}}\isdef \neig{G}{V_H}$ and 
	$\mathemph{\cneig{G}{H}}\isdef \cneig{G}{V_H}$. Furthermore, for a vertex $v\in V_G$,
	the \emph{closed neighbourhood} is given by $\mathemph{\neig{G}{v}}
        \isdef \neig{G}{\{v\}}$ and the \emph{open neighbourhood}
	is given by $N_G(v)=G[y\in V_G\mid vy\in E_G].$
	
	For the two graphs $G = (V_G, E_G)$ and $H = (V_H, E_H)$,
	the graph $\mathemph{G\setminus H}$ is defined as 
	$(V_G\ohne V_H, \{ xy \in E_G \ohne E_H \mid x \not\in V_H \wedge y \not\in V_H \})$.

	
\end{defi}	
\begin{defi}
    \noindent A \emph{graph homomorphism} $\phi\colon G\to H$ is any 
    adjacency-preserving vertex map, i.\,e. 
    $uv\in E(G)\ \Rightarrow\ f(u)f(v)\in E(H)$, see 
    \cite[Section 2, p. 161]{larrion2000locally}. Injective homomorphisms 
    are called \emph{monomorphisms}.
    An \emph{isomorphism} is a bijective homomorphism whose inverse
    is also a homomorphism.
\end{defi}

\section{Proofs from Topology}\label{Sec_Proofsfromtopology}

\begin{proof}[Proof of Lemma \ref{lem_topology}]
	In all cases, we start with a path $v_0v_1\dots v_k$ with $v_0,v_k\in \partial S$ and
	$v_1,\dots,v_{k+1} \not\in S$ such that none of the edges $v_iv_{i+1}$, for $0 \leq i < k$,
	lies in $S$. In the first case, we aim for a contradiction, in the second and third we show the claims directly.
	
	Since $G$ is simply connected, both the path and $S$ lie in a common
	planar subgraph $U$ of $G$, such that every bounded face is a triangle.	
	Henceforth, we consider all paths as paths in the plane.

	There are two paths along $\partial S$ that connect $v_k$ to $v_0$.
	By \cite[Corollary 1.2]{Thomassen_KuratowskiTheorem}, one of those together with $v_0v_1\dots v_k$ bound a disc containing the other one. These two paths cannot be
	the paths along $\partial S$, since $v_0\dots v_k$ does not lie in $S$.
	The path which lies ``inside'' will be denoted by $v_k=s_0s_1\dots s_m=v_0$,
	the other one by $v_kv_{k+1}\dots v_r=v_0$.
	
	We define $\alpha_i$ as the inner 
	\emph{facet-degree}\footnote{Which counts the
		number of facets and not the number of edges} at $v_i$ of the path
	$v_0\dots v_ks_1\dots s_m$ for every $0 \leq i \leq k$ (thus, $\alpha_0$
	and $\alpha_k$ are the facet-degrees between the path and $\partial S$).
	To prove the lemma, we focus on the path $v_0\dots v_kv_{k+1}\dots v_r$
	in more detail. We denote the inner facet-degree at $v_j$ by $\beta_j$.
	This situation is displayed in Figure \ref{fig_top}.
	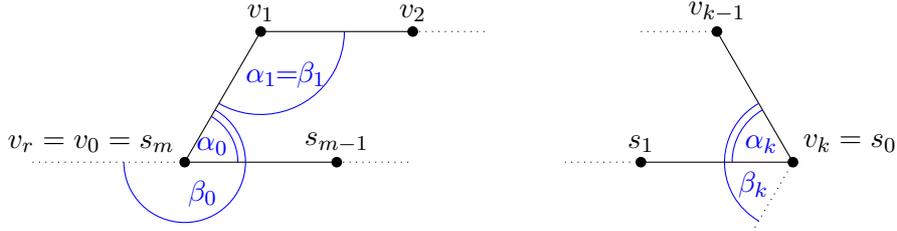
\begin{figure}[htbp]
		\begin{center}
			\begin{tikzpicture}
			\HexagonalCoordinates{6}{2}
			
			\draw (A21) -- (A11) -- (A12) -- (A22);
			\draw (A41) -- (A51) -- (A42);
                            \draw[dotted] (A21) -- ($(A31)!0.5!(A21)$);
			\draw[dotted] (A01) -- (A11);
                            \draw[dotted] ($(A31)!0.5!(A41)$) -- (A41);
                            \draw[dotted] (A22) -- ($(A22)!0.5!(A32)$);
                            \draw[dotted] ($(A32)!0.5!(A42)$) -- (A42);
			\draw[dotted] (barycentric cs:A50=1,A51=1) -- (A51);
			
			\foreach \v/\r/\n in {11/above left/$v_r = v_0 = s_m$,
				12/above/$v_1$, 22/above/$v_2$, 42/above/$v_{k-1}$, 21/above/$s_{m-1}$,
				41/above/$s_1$, 51/above right/$v_k=s_0$}{
				\fill[black] (A\v) circle (2pt);
				\node[\r] at (A\v) {\n};
			}
			
			\foreach \p/\r/\s/\e/\n in { 
				A11/0.8/-180/60/$\beta_0$, A11/0.7/0/60/$\alpha_0$,
				A12/1.1/-120/0/$\alpha_1\!\!=\!\!\beta_1$,
				A51/0.8/120/180/$\alpha_k$, A51/0.9/120/240/}{
				\draw[blue] ($(\p)+(\s:\r)$) arc (\s:\e:\r);
				\node[blue] at ($(\p)+(0.5*\s+0.5*\e:0.6*\r)$) {\n};
			}
			\node[blue] at ($(A51)+(210:0.6)$) {$\beta_k$};
			
			\end{tikzpicture}
		\end{center}
		\caption{Illustration of a path starting at a boundary vertex of $\Delta_m\cong S\subseteq G$ and ending at a corner vertex}
		\label{fig_top}
	\end{figure}
	
	Since this path bounds a disc in the plane, Lemma 4.1.5 from 
	\cite{Baumeister_PhD} is applicable and gives
	\begin{equation*}
	6 = \sum_{v \text{ inner vertex}}(6 - \deg(v)) + \sum_{j=0}^{r-1}(3-\beta_j).
	\end{equation*}
	Since $\deg(v) =6$ for all vertices in $G$, we obtain
	\begin{equation*}
	6 \leq \sum_{j=0}^{r-1}(3-\beta_j) = (3-\beta_0) + \sum_{i=1}^{k-1}(3-\alpha_i) + (3-\beta_k) + \sum_{j=k+1}^{r-1}(3-\beta_j).
	\end{equation*}
	If a vertex $v_j$ with $k < j < r$ lies in a corner of $S$,
	we have $\beta_j = 1$; otherwise, $\beta_j=3$. Thus, it remains to
	analyse $\beta_0$ and $\beta_k$.
	
	If $v_0$ is  a corner vertex, we have $\beta_0 = 1 + \alpha_0$;
	otherwise, we have $\beta_0 = 3 + \alpha_0$. 
	Whichever case applies, we can rewrite the inequality into
	\begin{equation*}
	6 \leq \sum_{i=0}^k (3 - \alpha_i) - 6 + 2c,
	\end{equation*}
	where $c \in \{0,1,2,3\}$ is the number of corner vertices in 
	$\{v_k,v_{k+1},\dots,v_r\}$ (note the inclusion of $v_k$ and
	$v_r$ in this set).
	With this inequality,
	we proceed through the three cases of the lemma. Recall that
	$\alpha_0 \geq 1$ and $\alpha_k \geq 1$ have to hold (otherwise
	the edge $v_0v_1$ or the edge $v_{k-1}v_k$ lies in $S$).
	\begin{enumerate}
	    \item For the path $v_0v_1$ we obtain $6 \leq 2c - \alpha_0 - \alpha_1$,
		which has no solutions. Thus, there can be no edge between these vertices
		that does not already lie in $S$.
	    \item For the path $v_0v_1v_2$, we obtain $3 \leq 2c - \alpha_0 - \alpha_1 - \alpha_2$.
		If $\alpha_1 = 0$, we obtain $v_0 = v_2$. Otherwise, the only possible solution
		is $c=3$ and $\alpha_0=\alpha_1=\alpha_2=1$. This already implies that
		$\{v_0,v_1,v_2\}$ is a facet of $G$.
	    \item For the path $v_0v_1v_2v_3$, we obtain $2c \geq \alpha_0+\alpha_1+\alpha_2+\alpha_3$.
		Since the path is non-repeating, we have $\alpha_1 \geq 1$
		and $\alpha_2 \geq 1$; thus $c \in \{2,3\}$.
		
		Now, $\alpha_1 = 1$ implies the facet $\{v_0,v_1,v_2\}$. In particular,
		$v_0v_2v_3$ is a path. With part \eqref{top_b} of this lemma,
		we conclude that $\{v_0,v_2,v_3\}$ is a facet, in contradiction
		to our assumption. The same argument applies if $\alpha_2=1$.
		Thus, both of them have to be at least 2.
		
		Then, the only solution is $c= 3$ with $\alpha_0 = \alpha_3 = 1$
		and $\alpha_1 = \alpha_2 = 2$. Since $\alpha_0 = 1$, the triple $\{v_0,s_{m-1},v_1\}$ forms a facet.
		Since $\alpha_1 = 2$, the triple
		$\{v_1,s_{m-1},v_2\}$ also has to be a facet. For $v=s_{m-1}$, this was the claim
		that needed to be shown. \qedhere
	\end{enumerate}
\end{proof}

\begin{proof}[Proof of Lemma \ref{Lem_NeighbourhoodIntersection}]
	Assume to the contrary that there is an 
	$x \in (\neig{G}{S_1} \cap \neig{G}{S_2} \cap \neig{G}{S_3} ) \ohne S$.
	Since $S_i \subset S$, we conclude $x \in \neig{G}{S} \ohne S$. Without
	loss of generality, $x$ is adjacent to $(t,m-t,0)$ for some $0 \leq t < m$.
	We permute the coordinates, such that $t$ is maximal among all edges.
	\begin{enumerate}
		\item Case: $t > 0$:\\
		Since $(t,m-t,0) \not\in \Delta_{m-1}^{001}$, the vertex $x$ has
		to be adjacent to a boundary vertex of $\Delta_{m-1}^{001}$ as well,
		say $(s,0,m-s)$ for some $0 \leq s < m$ (see Figure \ref{t_larger_zero}).
		
		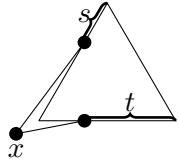
\begin{figure}[htbp]
                    \centering
                    \begin{tikzpicture}[scale=0.3,decoration={brace}]
                        \HexagonalCoordinates{4}{4}
                        
                        \draw
                        (A11) -- (A41) -- (A14) -- (A11)
                        (D00) -- (A13)
                        (D00) -- (A21);
                        \draw [decorate,line width=1pt] (A13) -- (A14);
                        \draw [decorate,line width=1pt] (A21) -- (A41);
                        \node at (D03) {$s$};
                        \node[above] at (A31) {$t$};
                        
                        \node[shape=circle, fill=black,scale=0.5] at (A13) {};
                        \node[shape=circle, fill=black,scale=0.5] at (A21) {};
                        \node[shape=circle, fill=black,scale=0.5] at (D00) {};
                        \node[below] at (D00) {$x$};
                    \end{tikzpicture}
                    \caption{Illustration of common neighbour $x$.}\label{t_larger_zero}
		\end{figure}
		
		By Lemma \ref{lem_topology}\eqref{top_b}, the vertices
		$(t,m-t,0)$ and $(s,0,m-s)$ have to be adjacent. This is
		only possible for $t = s = m-1$. But both facets incident
		to the edge $\{(m-1,1,0), (m-1,0,1)\}$ already lie in $S$ (for $m > 1$),
		contradicting $x \not\in S$.
		\item Case: $t=0$:\\
		Since $x$ is only adjacent to corner vertices (otherwise
		we would be in the case $t>0$), it has to be adjacent to
		$(0,m,0)$ and $(0,0,m)$ as well (to lie in each $\neig{G}{S_i}$).
		By Lemma \ref{lem_topology}\eqref{top_b}, this implies the
		adjacency of the corner vertices, i.\,e. $m=1$. But then, the 
		neighbourhood of $x$ contains a circle of length $3$, in 
		contradiction to neighbourhoods being circles of length at least $6$.\qedhere
		
	\end{enumerate}
\end{proof}

\begin{proof}[Proof of Lemma \ref{Lem_NeighbourhoodCycle}]
	If $m=0$, this is the definition of locally cyclic. For $m\geq 1$,
	we enumerate the boundary vertices of $S$ in cyclic order by
	$b_1b_2\dots b_{3m}$. For each adjacent pair $(b_i,b_{i+1})$,
	there is exactly one face containing $\{b_i,b_{i+1}\}$ and not
	lying in $S$. Call the final corner of this face $n_{i,i+1}$. 
	If $n_{i,i+1} \in S$, Lemma \ref{lem_topology} (\ref{top_a})
	would imply that the full face lied in $S$.
	
	With this notation, $N_G(b_i)\ohne S$ is the path
	$n_{i-1,i}x_1x_2\dots x_kn_{i,i+1}$ (there are no further edges
	between these vertices since the neighbourhood of $b_i$ is a cycle).
	None of the $x_i$ lies in $S$, since Lemma \ref{lem_topology} 
	(\ref{top_a}) would imply further boundary edges of $S$, in
	contradiction to our assumption. This situation is illustrated
        in Figure \ref{Fig_localNeighbourhood}.

        \begin{figure}[htbp]
            \centering
            \begin{tikzpicture}[scale=1]
                \HexagonalCoordinates{3}{2}
                \def\rad{1.8}
                \def\ang{40}
                \coordinate (N1) at ($(A11)+(120-\ang:\rad)$);
                \coordinate (Nk) at ($(A11)+(\ang:\rad)$);
                
                \pgfsetfillpattern{north west lines}{blue}
                \fill (A00) -- (A20) -- (A11) -- (A01) -- (A00);
                
                \foreach \p/\r/\l in {A01/above left/$b_{i-1}$, A11/below left/$b_i$, A20/right/$b_{i+1}$,
                    A02/above/{$n_{i-1,i}$}, N1/above/$x_1$, Nk/right/$x_k$, A21/right/{$n_{i,i+1}$}}{
                        \node[\r,fill=white] at (\p) {\l};
			\fill[black] (\p) circle (2pt);
                    }
                \node[blue,fill=white] at (U00) {$S$};

                \draw
                    (A00) -- (A01) -- (A11) -- (A20)
                    (A01) -- (A02) -- (A11) -- (A21) -- (A20)
                    (A02) -- (N1) -- (A11) -- (Nk) -- (A21);
                \draw[dotted] (Nk) arc (\ang:120-\ang:\rad);
            \end{tikzpicture}
            \caption{Part of a local neighbourhood}\label{Fig_localNeighbourhood}
        \end{figure}
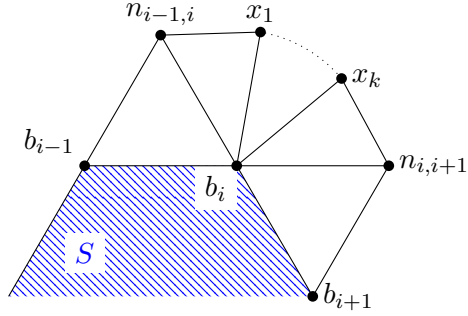
	
	By Lemma \ref{lem_topology}\eqref{top_c}, any edge between
	vertices in $\neig{G}{S}\ohne S$ already lies in one $N_G(b_i)\ohne S$.
	Combining these paths gives the desired cycle.
\end{proof}

\section{Proofs from Properties of Triangles}\label{Sect_propoftriang}
\begin{proof}[Proof of Lemma \ref{Lem_TriangleInclusionOne}]
	Let $\Delta_{m-1}^{\vec{t}} \to \Hex_m$ be a triangle inclusion map
	whose image lies in $\Delta_m$. Thus, each component
	of $\vec{t}$ has to be non-negative, leaving 
	$\vec{t} \in V_1\cap\Z_{\geq 0}^3=\vec{E}$.
	
	It remains to consider those $\Delta_{m-1} \cong S \subset \Delta_m$
	that are not the image of a triangle inclusion map. The boundary
	of such an $S$ consists of three straight paths of length $m-1$.
	By Remark \ref{Rem_StraightPathsInTriangle}, there
	are six such paths along the boundary of $\Delta_m$ and three such paths in
	the interior (each given by all the vertices for which one fixed has value 1). 
	The boundary paths can only lie in one $\Delta_{m-1} \cong S \subset \Delta_m$.
	Since the triangle inclusion maps ``use'' two boundary paths and one
	interior path each, the only remaining possibility is combining
	the three interior paths into a $\Delta_{m-1}$. But this is only possible
	if the paths meet at the boundary (where one component is 0). Thus,
	the component sum $m$ has to be 2.
\end{proof}

\begin{proof}[Proof of Lemma \ref{Lem_TriangleInclusionTwo}]
	Let $\Delta_{m-2}^{\vec{f}}\colon \Delta_{m-2} \to \Hex_m$ be a 
	triangle inclusion map
	whose image lies in $\Delta_m$. Thus, each component of $\vec{t}$ has
	to be non-negative and thus  $\vec{f}\in V_2\cap\Z_{\geq 0}^3=\vec{E}+\vec{E}$.
	For $m=2$, all $\Delta_0\cong S\subseteq \Delta_m$ are possible images.
	
	It remains to consider those $\Delta_{m-2} \cong S \subset \Delta_m$
	that are not the image of a triangle inclusion map. In this case 
	$m\geq 3$. The boundary of
	such an $S$ consists of three straight paths of length $m-2$.
	Remark \ref{Rem_StraightPathsInTriangle} describes all paths of this kind inside $\Delta_{m}$.
	Up to the group action (Subsection \ref{Subsect_Hexagonal}),
	we only need to look at four of these paths:
	\begin{enumerate}
		\item The path $\alpha_{|\{0,\dots,m-2\}}$
		can only be the boundary of one $\Delta_{m-2}$ and it already
		is the boundary of $\Delta_{m-2}^{(2,0,0)}$.
		\item The path $\alpha_{|\{1,\dots,m-1\}}$
		can only be the boundary of one $\Delta_{m-2}$ and it already
		is the boundary of $\Delta_{m-2}^{(1,1,0)}$.
		\item\label{case} The path $\beta_{|\{0,\dots,m-2\}}$ is
		the boundary of $\Delta_{m-2}^{(1,0,1)}$ (whose third component
		is higher). There can only be a $\Delta_{m-2}$ with lower third
		component if $m-2 \leq 1$, implying $m=3$. The triangle has corner
		vertices $(2,0,1)$, $(1,1,1)$, and $(2,1,0)$.
		\item The path $\gamma$ is the 
		boundary of $\Delta_{m-2}^{(0,0,2)}$ (whose third component is
		higher). There can only be a $\Delta_{m-2}$ with lower third
		component if $m-2 \leq 2$. The case $m=3$ gives a triangle which 
		is brought to the triangle of (\ref{case}.) by the group action.
		The case $m=4$ gives vertices $(2,0,2)$, $(0,2,2)$, and $(2,2,0)$.
	\end{enumerate}
	Applying the group action to these $\Delta_{m-2}$ gives the desired results.
\end{proof}





\end{document}